\newtheorem{theorem}{Theorem}[section]
\newtheorem{lemma}[theorem]{Lemma}
\newtheorem{proposition}[theorem]{Proposition}
\newtheorem{corollary}[theorem]{Corollary}
\newtheorem*{question*}{Question}
\theoremstyle{definition}
\newtheorem*{definition*}{Definition}
\newtheorem{definition}[theorem]{Definition}
\newtheorem*{example*}{Example}
\newtheorem{example}[theorem]{Example}
\newtheorem*{observation*}{Observation}
\newtheorem*{Goal*}{Goal}
\newtheorem*{Assumption*}{Assumption}
\theoremstyle{remark}
\newtheorem*{remark*}{Remark}
\newtheorem{remark}[theorem]{Remark}
\numberwithin{equation}{section}
\newcommand{\ow}{\omega}
\newcommand{\lda}{\lambda}
\newcommand{\p}{\partial}
\newcommand{\C}{{\mathbb{C}}}
\newcommand{\R}{{\mathbb{R}}}
\newcommand{\Z}{{\mathbb{Z}}}
\newcommand{\N}{{\mathbb{N}}}
\newcommand{\D}{{\mathbb{D}}}
\newcommand{\w}{\wedge}
\newcommand{\uline}{\uline}
\newcommand{\PP}{\mathcal{P}}
\newcommand{\MM}{\mathcal{M}}
\newcommand{\W}{\widehat{W}}
\DeclareMathOperator{\id}{id}
\DeclareMathOperator{\Ham}{Ham}
\DeclareMathOperator{\Spec}{Spec}
\DeclareMathOperator{\Fix}{Fix}
\DeclareMathOperator{\sh}{shift}
\DeclareMathOperator{\HW}{HW}
\DeclareMathOperator{\CZ}{CZ}
\DeclareMathOperator{\RS}{RS}
\DeclareMathOperator{\HF}{HF}
\DeclareMathOperator{\CF}{CF}
\begin{document}

\title{A computation of the ring structure in wrapped Floer homology}

\author{Hanwool Bae}
\author{Myeonggi Kwon}

\thanks{The first author was partially supported by the National Research Foundation of Korea(NRF) grant funded by the Korea government(MSIT) (No.2020R1A5A1016126).}
\thanks{The second author was partially supported by the Institute for Basic Science (IBS-R003-D1) and by the SFB/TRR 191 `Symplectic Structures in Geometry, Algebra and Dynamics' funded by the DFG (Projektnummer 281071066 – TRR 191).}

\begin{abstract}
We give an explicit computation of the ring structure in wrapped Floer homology of a class of real Lagrangians in $A_k$-type Milnor fibers. In the $A_k$-type plumbing description, those Lagrangians correspond to the cotangent fibers or the diagonal Lagrangians. The main ingredient of the computation is to apply a version of the Seidel representation. For a technical reason, we first carry out computations in v-shaped wrapped Floer homology, and this in turn gives the desired ring structure via the Viterbo transfer map.
\end{abstract}

\address{Center for Quantum Structures in Modules and Spaces, Seoul National University, Seoul, South Korea}
 \email {hanwoolb@gmail.com}

\address{Department of Mathematics Education, Sunchon National University, Sunchon 57922, Republic of Korea}
 \email {mkwon@scnu.ac.kr}

\subjclass[2010]{53D40, 53D12; 37J10}
\keywords{wrapped Floer homology, real Lagrangians, Seidel representation, $A_k$-type Milnor fibers}

\maketitle

\section{Introduction}

For an admissible Lagrangian $L$ in a Liouville domain, we consider a version of Lagrangian Floer homology, called \emph{wrapped Floer homology} $\HW_*(L)$, introduced in \cite{AbSe10}. Wrapped Floer homology is equipped with a family of algebraic structures, called $A_{\infty}$-structures, which include a ring structure with unity. Even though there are some structural results, for example \cite{AbPoSc08, EkLe17}, it is usually difficult to compute those structures explicitly as with other Floer theoretical invariants.

A \emph{Brieskorn Milnor fiber} is a (completed) Liouville domain defined as a regular level set of a Brieskorn-type complex polynomial. They have played an important role in symplectic and contact topology, especially as computational examples in Floer theory. Various Floer theoretical invariants of Brieskorn Milnor fibers are studied in \cite{EtLe17, Fa15, Gu17, KwvK16, Ue19, Ue16, Us99, vK08}.
 
  
 In this paper, we give an explicit computation of the ring structure in wrapped Floer homology of real Lagrangians in $A_k$-type Brieskorn Milnor fibers. In Section \ref{sec: defrealLag}, we define a class of real Lagrangians in $A_k$-type Milnor fibers which are of particular interest. This includes the real Lagrangians considered in \cite{KiKwLe18} where the graded group structure of wrapped Floer homology is computed. Note that $A_k$-type Milnor fibers can be seen as the $k$-fold linear plumbings of cotangent bundles $T^*S^n$, see Section \ref{sec: realplumbings}. The real Lagrangians we consider are the cotangent fibers away from the plumbing regions or the diagonal Lagrangians in the plumbing regions, see Section \ref{sec: fibersdiagonals}.

The main idea of our computations comes from  the work of Uebele \cite{Ue19} which uses an idea of the \emph{Seidel representation} to study the ring structure in symplectic homology. For a closed symplectic manifold $(M, \ow)$, Seidel \cite{Se97} introduced a representation of the fundamental group $\pi_1(\Ham(M, \ow))$ of Hamiltonian diffeomorphisms on the Hamiltonian Floer homology $\HF_*(M, \ow)$. More precisely, for a loop of Hamiltonian diffeomorphisms on $M$, say $g : S^1 \rightarrow \Ham(M, \ow)$, we have an associated group isomorphism $S_g: \HF_*(M, \ow) \rightarrow \HF_{*}(M, \ow)$ up to a degree shift. We call $S_g$ a \emph{Seidel operator}. In particular the Seidel operator satisfies the so-called \emph{module property}, namely $S_g(x \cdot y) = S_g(x) \cdot y$ for $x, y \in \HF_*(M, \ow)$. A relative version of the Seidel representation in Lagrangian Floer homology for closed symplectic manifolds was studied in \cite{HuLa10}.

We carry out this idea in the context of wrapped Floer theory. For a technical reason, we do not work directly with wrapped Floer homology, but instead we construct a Seidel operator in a v-shaped version of wrapped Floer homology. \emph{V-shaped wrapped Floer homology}, denoted by $\widecheck{\HW}_*(L)$, is a variant of wrapped Floer homology which also admits a ring structure with unity, see Section \ref{sec: vhw} for its definition. This can be regarded as a relative version of v-shaped symplectic homology introduced in \cite{CiFrOa10}. We remark that the v-shaped wrapped Floer homology is group isomorphic to the wrapped Floer homology of the trivial Lagrangian cobordisms \cite{CiOa18} and also to the Lagrangian Rabinowitz Floer homology \cite{Da18, Me14}.

An important property of v-shaped wrapped Floer homology for our purpose is that under a certain index-positivity condition on the contact type boundary, called \emph{product-index-positivity} (Definition \ref{def: prodindpos}), its ring structure can be defined purely in the symplectization part of the completion of a Liouville domain, see Corollary \ref{cor: ringindependence}. A related discussion can also be found in \cite[Section 9.5]{CiOa18}. This property  allows us to define a Seidel operator in $\widecheck{\HW}_*(L)$ by taking a path of Hamiltonian diffeomorphisms on the symplectization part which extends the Reeb flow on the contact boundary. Such an extension of the Reeb flow to the whole Liouville filling is not possible in our examples, see Remark \ref{rmk: notextend}. 

To formulate a Seidel operator in v-shaped wrapped Floer homology, let $(W, \lda)$ be a Liouville domain with a Liouville form $\lda$. Denote the contact type boundary by $(\Sigma, \xi = \ker \alpha)$ where $\alpha$ is a contact form given by $\alpha : = \lda|_{\Sigma}$. Let $L$ be an admissible Lagrangian, meaning that it is exact and intersects with $\Sigma$ in a Legendrian $\mathcal{L} : = \p L$. For simplicity of gradings in Floer homology, we put topological assumptions in \eqref{eq: topconditions} throughout this paper.

We say that the Reeb flow $\phi_{R}^t$ on $(\Sigma, \alpha)$ is \emph{$\mathcal{L}$-periodic} if there exists a positive real number $T_0$ such that $p \in \mathcal{L}$ if and only if $\phi_{R}^{T_0}(p) \in \mathcal{L}$. Then, extending the Reeb flow on $(\Sigma, \alpha)$, we can define a path of Hamiltonian diffeomorphisms $g: [0, 1] \rightarrow \Ham(\R_+ \times \Sigma)$ on the symplectization $\R_+ \times \Sigma$ of the form
$$
g_t(r, y) = (r, \phi_R^{f(t)}(y)). 
$$
Here $f$ is a smooth function on $[0,1]$ such that $g_0(r, y), g_1(r, y) \in \R_+ \times \mathcal{L}$ if and only if $(r, y) \in \R_+ \times \mathcal{L}$. We use the path $g$ to define a Seidel operator on $\widecheck{\HW}_*(L)$ in the following theorem.

\begin{theorem}\label{thm: introthm1}
Suppose that the triple $(\Sigma, \xi, \mathcal{L})$ is product-index-positive and the Reeb flow $\phi_R^t$ is $\mathcal{L}$-periodic. Then we have a graded group isomorphism
$$
S_g : \widecheck{\HW}_*(L) \rightarrow \widecheck{\HW}_{* +I(g)}(L)
$$
up to a degree shift $I(g)$ depending on $g$, and it satisfies a module property
$$
S_g(x \cdot y) = S_{g_0}(x) \cdot S_g(y) = S_g(x) \cdot S_{g_1}(y) 
$$
for $x, y \in \widecheck{\HW}_*(L)$.
\end{theorem}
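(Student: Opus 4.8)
The plan is to transplant Seidel's construction of the operator attached to a loop of Hamiltonian diffeomorphisms to the Lagrangian v-shaped setting, using crucially that by Corollary~\ref{cor: ringindependence} the complex, the differential, and the product relevant for a Seidel operator can all be read off inside the symplectization $\R_+\times\Sigma$, so that it suffices for $g$ to be defined there --- which is all we have, by Remark~\ref{rmk: notextend}. Fix an admissible v-shaped Hamiltonian $H$ computing $\widecheck{\HW}_*(L)$ which on $\R_+\times\Sigma$ is radial with slope avoiding all lengths of Reeb chords of $\mathcal L$; the generators of $\widecheck{\CF}_*(L)$ lying in $\R_+\times\Sigma$ then correspond to Reeb chords of $\mathcal L$. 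Let $K_t$ be the time-dependent Hamiltonian on $\R_+\times\Sigma$ generating $g$; since $g_t(r,y)=(r,\phi_R^{f(t)}(y))$ and the Hamiltonian flow of the radial coordinate is the Reeb flow, $K_t=f'(t)\,r$ in the usual conventions. The first step is the naturality of the Floer equation under conjugation: for a Floer datum $(H,J)$ put $(g^{\#}H)_t=K_t+H_t\circ g_t^{-1}$ and $(g^{\#}J)_t=(g_t)_*J_t$; then $u(s,t)\mapsto g_t^{-1}(u(s,t))$ sends Floer strips for $(H,J)$ with boundary on $L$ bijectively to Floer strips for $(g^{\#}H,g^{\#}J)$ with the moving boundary condition $g_t^{-1}(L)$, and likewise on asymptotic chords. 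Because $g$ preserves the radial coordinate, $g^{\#}H=K_t+H$ is again radial when $H$ is; its time-$1$ chords with boundary on $\R_+\times\mathcal L$ correspond, via the $\mathcal L$-periodicity of $\phi_R^t$, to those of $H$, and near $\Sigma$ the only change in the boundary condition is the harmless $g_1^{-1}(\R_+\times\mathcal L)=\R_+\times\mathcal L$.

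The second step makes this into the asserted isomorphism with its degree shift. Counting rigid Floer strips on $\R\times[0,1]$ interpolating between $(H,J)$ at $s\to-\infty$ and $(g^{\#}H,g^{\#}J)$ at $s\to+\infty$ --- equivalently, after undoing the conjugation, strips with a moving Lagrangian boundary $g_{\rho(s)}^{-1}(L)$ and the matching domain-dependent Hamiltonian, $\rho:\R\to[0,1]$ monotone --- defines a chain map $\widehat S_g$; its moduli spaces are compact by a maximum principle confined to $\R_+\times\Sigma$ (the perturbation $K_t=f'(t)r$ is linear in $r$) together with product-index-positivity, which excludes the breaking that would obstruct the chain-map property. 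Running the same construction for the reversed path $\bar g_t:=g_{1-t}$ and gluing, $\widehat S_{\bar g}\circ\widehat S_g$ becomes the continuation map for the contractible concatenation $g\ast\bar g$, hence a quasi-isomorphism, and likewise $\widehat S_g\circ\widehat S_{\bar g}$; so $\widehat S_g$ descends to an isomorphism $S_g:\widecheck{\HW}_*(L)\to\widecheck{\HW}_{*+I(g)}(L)$. The shift $I(g)$ is the Robbin--Salamon index of the loop of linearized Reeb flows $\{\phi_R^{s(f(1)-f(0))}\}_{s\in[0,1]}$ inserted along each chord: $\mathcal L$-periodicity forces $f(1)-f(0)\in T_0\Z$, making this a well-defined integer, and product-index-positivity makes it the same integer for every generator, so that $S_g$ is homogeneous of degree $I(g)$.

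The third step is the module property. Recall that $x\cdot y$ is defined by counting maps from the triangle domain $\Sigma_{\mathcal T}$ --- a disc with two input punctures $z_1,z_2$ and one output puncture $z_0$, boundary on $L$ --- with a suitable Floer datum. I introduce a $g$-twisted version of this moduli problem in which the Lagrangian boundary condition becomes a family $z\mapsto\psi_z^{-1}(L)$ along $\partial\Sigma_{\mathcal T}$, with a correspondingly twisted domain-dependent Hamiltonian, where $\psi$ takes values in the Hamiltonian diffeomorphisms of $\R_+\times\Sigma$ of the same type as the $g_t$, is locally constant away from neighbourhoods of the punctures, and near each strip-like end agrees with a reparametrized piece of $g$ or with the constant map at $g_0$ or at $g_1$; the homotopy class of $\psi$, rel its prescribed behaviour at the ends, determines the resulting chain operation because a homotopy of $\psi$ yields a cobordism of moduli spaces. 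The choice of $\psi$ that carries the full variation of $g$ into the end $z_0$ while keeping $g_0$ near $z_1$ and $z_2$ computes $S_g(x\cdot y)$; sliding the nontrivial part of $\psi$ along the boundary arc from $z_0$ to $z_1$ past the puncture $z_1$ --- admissible because $g_0$ and $g_1$ preserve $\R_+\times\mathcal L$ --- replaces the operation at $z_1$ by $S_{g_0}(-)$ and leaves $g_1$ near $z_0$, computing $S_{g_0}(x)\cdot S_g(y)$; sliding past $z_2$ instead computes $S_g(x)\cdot S_{g_1}(y)$. The same compactness input lets us pass to homology and conclude $S_g(x\cdot y)=S_{g_0}(x)\cdot S_g(y)=S_g(x)\cdot S_{g_1}(y)$, which is the triangle analogue of the closed-case arguments of Seidel~\cite{Se97} and Uebele~\cite{Ue19}.

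The main obstacle I expect is the interplay of two issues that run through all three steps. First, $g$ lives only on $\R_+\times\Sigma$ and does not extend over $\W$ (Remark~\ref{rmk: notextend}), which is exactly why the construction must take place in the v-shaped theory and why Corollary~\ref{cor: ringindependence} and product-index-positivity (Definition~\ref{def: prodindpos}) are indispensable for the compactness of every moduli space above. Second, the grading bookkeeping: one has to verify that the index contribution of the inserted Reeb rotation really is a single integer $I(g)$ independent of the generator, where $\mathcal L$-periodicity (so that $f(1)-f(0)$ is a multiple of $T_0$) and product-index-positivity (to control the Conley--Zehnder and Robbin--Salamon indices uniformly) are both used. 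By contrast, the combinatorial accounting of which half of $g$ is assigned to which puncture in the module-property argument, while it must be carried out carefully to land on the precise formula, is routine once the homotopy-invariance of the twisted triangle moduli problem is in place.
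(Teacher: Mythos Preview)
Your approach differs from the paper's in a structurally important way. The paper defines $S_g$ \emph{tautologically}: the map $x\mapsto g\cdot x$ is a bijection between chords of $H$ and chords of $g_*H$, and $u\mapsto g\cdot u$ is a bijection between the corresponding Floer moduli spaces (Lemma~\ref{lem: sendfloerstrip}), so $S_g:\HF_*(L;H,J)\to\HF_{*+I(g)}(L;g_*H,g_*J)$ is an isomorphism for free. The real work is then to show that $\HF_*(L;g_*H,g_*J)$ still computes $\widecheck{\HW}_*(L)$; this occupies Section~\ref{sec: shapeofg_*H}. By contrast you define $\widehat S_g$ as a continuation map with moving Lagrangian boundary and argue invertibility via $\bar g$. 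Both routes are legitimate, but the paper's tautological definition makes the module property nearly immediate: one homotopes $g$ to be constant on $[0,1/4]$, chooses a specific strip-like model for the half-pair-of-pants so that the input near one puncture sees only $g_0$, and applies the bijection $u\mapsto g\cdot u$ directly to the product moduli space (Theorem~\ref{thm: prodproperty}). Your sliding argument with domain-dependent $\psi$ achieves the same conclusion but requires the extra homotopy-invariance machinery you describe.

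There is, however, a genuine technical gap in your Step~2 that the paper addresses explicitly and that you should not skip. The pushforward $g^{\#}H=K_t+H$ is \emph{not} v-shaped: near $r=0$ it has slope $f'(t)$ rather than being constant (see the shape in~\eqref{shapegH} and Figure~\ref{fig: pushHam}). This matters because the stretching-the-neck argument underlying Corollary~\ref{cor: ringindependence}, which you invoke for compactness, requires the Hamiltonian to be constant near the hypersurface where one stretches. The paper fixes this by truncating $g_*H$ to $(g_*H)_R$, constant on $(0,R)\times\Sigma$, and then proving (Lemma~\ref{lem: 1to1Rtruncated} and Lemma~\ref{lem: commdiagcontinu}) that this does not change the filtered chain complex or the continuation maps. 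Your maximum-principle sentence does not cover this; you need the truncation step.

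Finally, your justification of the degree shift is misattributed. The fact that $I(g)$ is the same integer for every generator is \emph{not} a consequence of product-index-positivity; it is a topological statement proved via the homotopy invariance of the Maslov index for the loop of Lagrangians $\ell_x\Lambda_0$ (the Lemma following the definition of $I_x(g)$ in Section~\ref{sec: deg of I(g)}), using only that $g_0,g_1$ preserve $\widehat L$. Product-index-positivity is needed only for compactness of the moduli spaces in the symplectization, not for the grading.
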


The degree shift $I(g)$ in the above theorem is given by a Maslov-type index for paths of Hamiltonian diffeomorphisms, see Section \ref{sec: deg of I(g)}. For our purpose, it is crucial to compute $I(g)$ in examples. In Section \ref{sec: compu Ig}, we observe that if the triple $(\Sigma, \alpha, \mathcal{L})$ forms a \emph{real contact manifold} and the Reeb flow $\phi_R^t$ is $\mathcal{L}$-periodic, then we can describe the shift $I(g)$ in terms of the Maslov index of the principal Reeb chords, see Lemma \ref{lem: computingI(g)}. This will be sufficient to determine $I(g)$ explicitly in many cases.

\subsection*{Outline of computations} Here we outline our computation of the ring structure of (ordinary) wrapped Floer homology. Let us take the real Lagrangian $L_0$  in $A_k$-type Milnor fiber defined in Section \ref{sec: defrealLag}. This can be seen as a cotangent fiber in $A_k$-type plumbing of $T^*S^n$ as in Proposition \ref{prop: ident_diag_fiber}. We first compute the group structure on $\widecheck{\HW}_{*}(L_0)$ using a Morse--Bott spectral sequence in Section \ref{sec: compgroupvshaped}, and the result is given as follows. 
\begin{proposition}
For $n \geq 3$, we have
$$
\widecheck{\HW}_*(L_0) = \begin{cases} \Z_2 & * = I \cdot N -n +1, I \cdot N \text{ with $N \in \Z$}; \\ 0 & \text{otherwise}.  \end{cases}
$$
Here $I = (n-2)(k+1)+ 2$.
\end{proposition}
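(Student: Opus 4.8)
The plan is to compute $\widecheck{\HW}_*(L_0)$ directly from the definition by running the Morse--Bott spectral sequence attached to a V-shaped Hamiltonian on the completion, exploiting that the Reeb flow on the contact boundary is periodic. From Sections~\ref{sec: defrealLag}--\ref{sec: fibersdiagonals} I would take as given that the contact boundary $(\Sigma,\alpha)$ of the $A_k$-type Milnor fiber is a Brieskorn manifold whose contact form has a Reeb flow $\phi_R^t$ generating a circle action, that the Legendrian $\mathcal{L}=\p L_0\cong S^{n-1}$ is the ideal boundary of the cotangent fiber $L_0$, and that $\phi_R^t$ is $\mathcal{L}$-periodic with principal period $T_0$. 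Then the generators of $\widecheck{\HW}_*(L_0)$ organize into Morse--Bott families: the constant family $C_0$ at the minimum of the V, and, for every $N\in\Z\setminus\{0\}$, a family $C_N$ of $|N|$-fold Reeb chords of $\mathcal{L}$ sitting on the positive ($N>0$) or negative ($N<0$) part of the symplectization. Since $\phi_R^{NT_0}(\mathcal{L})=\mathcal{L}$ and a chord in $C_N$ is determined by its initial point, each $C_N$ is diffeomorphic to $\mathcal{L}\cong S^{n-1}$. The first point requiring care is precisely this classification of the Reeb chords of $\mathcal{L}$: one must exclude chords of length outside $T_0\N$, account for the exceptional Reeb orbits of the Brieskorn manifold, identify $C_0$, and check that every $C_N$ is Morse--Bott nondegenerate.

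The heart of the argument is the grading. The family $C_N$ contributes $H_*(C_N;\Z_2)=H_*(S^{n-1};\Z_2)$ to the $E_1$-page, concentrated --- after the Morse--Bott correction and the normalization fixed by the topological assumptions in \eqref{eq: topconditions} --- in the two degrees determined by the Robbin--Salamon index of a chord in $C_N$. The key input is the index computation for the Brieskorn Reeb flow along chords of $\mathcal{L}$, the chord counterpart of the Brieskorn index computations of \cite{KwvK16, Ue19} (cf.\ Lemma~\ref{lem: computingI(g)}): it shows that $C_N$ places $H_0(S^{n-1})$ in degree $IN-n+1$ and $H_{n-1}(S^{n-1})$ in degree $IN$, where $I=(n-2)(k+1)+2$. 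Hence
$$
E_1 \;=\; \bigoplus_{N\in\Z} H_{\,*-(IN-n+1)}(S^{n-1};\Z_2),
$$
which is $\Z_2$ exactly when $*\in\{IN-n+1,\,IN\}$ for some $N\in\Z$ and $0$ otherwise; since $I\geq 2n-2>n-1$ for $n\geq 3$, distinct families never contribute to the same degree, so this is $\Z_2$ and not $\Z_2^2$.

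It then remains to see that the spectral sequence degenerates at $E_1$, and here $n\geq 3$ is essential. For $n\geq 3$ one has $I\geq 2n-2$, so consecutive elements of the set of occupied degrees $S=\{IN-n+1:N\in\Z\}\cup\{IN:N\in\Z\}$ differ either by $n-1$ (within one family) or by $I-n+1=(n-2)(k+1)-(n-3)$ (from one family to the next), and both of these are at least $2$. Since a differential $d_r$ lowers total degree by exactly $1$ and, for $r\geq 1$, strictly changes the action filtration level, a nonzero $d_r$ would produce two elements of $S$ at distance $1$ --- which is impossible. Therefore $d_r=0$ for all $r$, so $\widecheck{\HW}_*(L_0)=E_\infty=E_1$, exactly the asserted group. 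As a consistency check, one expects an exact sequence relating $\widecheck{\HW}_*(L_0)$ to $\HW_*(L_0)$ and its cohomology, through which the answer should match the computation of $\HW_*(L_0)$ in \cite{KiKwLe18}.

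I expect the main obstacle to be the grading step: enumerating the Reeb chords of $\mathcal{L}$ and pinning down their transverse indices precisely enough that $C_N$ lies in degrees $IN-n+1$ and $IN$, with all conventions matched so that $I$ comes out to be $(n-2)(k+1)+2$ on the nose rather than merely up to a uniform shift. Once that is in hand, the degeneration and the final bookkeeping are formal.
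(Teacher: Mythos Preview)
Your proposal is correct and follows essentially the same approach as the paper: run the Morse--Bott spectral sequence of Theorem~\ref{vMBss}, identify each Morse--Bott family $\mathcal{L}_{NT_0}\cong S^{n-1}$ with Maslov index $IN$ (the paper cites \cite[Lemma~7.2]{KiKwLe18} for this, which is exactly the chord index input you flag as the crux), and observe that for $n\geq 3$ the resulting $E^1$-page degenerates for degree reasons. Your gap argument for degeneration is a slightly more explicit version of what the paper leaves as ``for degree reasons,'' but the content is the same.
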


 An essential feature of the above graded group is that the generators of $\widecheck{\HW}_*(L_0)$ appear periodically due to the $\mathcal{L}$-periodicity of the Reeb flow, and the rank of $\widecheck{\HW}_*(L_0)$ in each degree is at most one. Because of this simplicity, the module property in Theorem \ref{thm: introthm1} of the Seidel operator turns out to be enough to determine the full ring structure of $\widecheck{\HW}_{*}(L)$ algebraically, except for the case when $k=1$.
 
\begin{example}
In the case when $n=3$ and $k=2$, we have
$$
\widecheck{\HW}_*(L_0) = \begin{cases} \Z_2 & * =\dots,-2, 0, 3, 5, 8, 10, \dots \\ 0 & \text{otherwise}  \end{cases}
$$
with the degree shift in Theorem \ref{thm: introthm1} given by $I(g) =5$ . For degree reasons, one finds that it suffices to determine the product of the form
$$
c_5\cdot c_k, \quad c_k \cdot c_5
$$
where $c_{k}$ denotes the non-trivial class of degree $k$. Observe that the degrees with $\widecheck{\HW}_*(L_0) \neq 0$ appear in the pattern
$$
\cdots \xrightarrow{+5}  \overbrace{-2, 0} \xrightarrow{+5} \overbrace{3, 5} \xrightarrow{+5}  \overbrace{8, 10}  \xrightarrow{+5}  \cdots
$$
which essentially comes from the periodicity of the Reeb flow. We see, in particular, that
$S_g(c_k) = c_{k+5}$
since the Seidel operator $S_g$  in Theorem \ref{thm: introthm1} is an isomorphism and $\widecheck{\HW}_*(L_0)$ is of at most rank one. Using the module property of $S_g$, we deduce that 
$$
c_5\cdot c_k = S_g(c_0) \cdot c_k = S_g(c_0 \cdot c_k) = S_g(c_k) = c_{k+5}.
$$
Here $c_0$ is the unit element. Similarly one has $c_k \cdot c_5 = c_{k+5} =c_5\cdot c_k$. From this we can conclude that
$$
\widecheck{\HW}_*(L_0) \cong \Z_2[x, y, y^{-1}]/ ( x^2)
$$
as rings with $|x| =3$ and $|y| = 5$, via $c_3 \mapsto x$ and $c_5 \mapsto y$.
\end{example}

In the general case, the computation is given as follows.
\begin{proposition}
For $n \geq 3$ and $k\geq 2$, we have a graded ring isomorphism
$$
\widecheck{\HW}_*(L_0) \cong \Z_2[x, y, y^{-1}]/ ( x^2),
$$
where $|x| = I -n +1$ and $|y| = I$ with $I = (n-2)(k+1)+2$.
\end{proposition}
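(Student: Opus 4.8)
The plan is to promote the additive description of $\widecheck{\HW}_*(L_0)$ from the previous Proposition to the ring structure by combining it with the module property of the Seidel operator of Theorem~\ref{thm: introthm1}, following the pattern of the Example above. Set $I=(n-2)(k+1)+2$, so that $\widecheck{\HW}_*(L_0)$ is one-dimensional over $\Z_2$ in each degree of the form $NI$ or $NI-n+1$ ($N\in\Z$) and vanishes in every other degree; write $c_j$ for the unique nonzero class in degree $j$ when $j$ lies in this support. Then $c_0$ is the unit, and we aim to show that $x:=c_{I-n+1}$ and $y:=c_I$ generate the ring with the single relation $x^2=0$ and with $y$ invertible.

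First I would invoke Theorem~\ref{thm: introthm1}: the triple $(\Sigma,\xi,\mathcal L)$ attached to the $A_k$-link is a product-index-positive real contact manifold with $\mathcal L$-periodic Reeb flow (Section~\ref{sec: realplumbings} and Proposition~\ref{prop: ident_diag_fiber}), so we get a graded isomorphism $S_g\colon\widecheck{\HW}_*(L_0)\to\widecheck{\HW}_{*+I(g)}(L_0)$ satisfying $S_g(a\cdot b)=S_{g_0}(a)\cdot S_g(b)=S_g(a)\cdot S_{g_1}(b)$. Two preliminary observations streamline this. First, the endpoint operators are trivial: applying the module property with one factor equal to $c_0$ gives $S_{g_0}(c_j)\cdot S_g(c_0)=S_g(c_j)\ne 0$, so $S_{g_0}(c_j)\ne 0$ and hence $S_{g_0}(c_j)=c_j$ by the rank-one constraint, and likewise $S_{g_1}=\id$; thus the module property simplifies to $S_g(a\cdot b)=a\cdot S_g(b)=S_g(a)\cdot b$. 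Second, the shift $I(g)$ equals $I$: by Lemma~\ref{lem: computingI(g)} it is read off from the Maslov index of the principal Reeb chords, and since the automorphism $S_g$ must preserve the $I$-periodic support of $\widecheck{\HW}_*(L_0)$, this index works out to $I$.

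Now all products are determined. Since $S_g$ is a degree-$(+I)$ isomorphism of graded groups of rank $\le 1$, it sends the generator in each nonzero degree to the generator $I$ degrees higher; in particular $S_g(c_0)=c_I=y$, and therefore, for any $c_j\ne 0$, $c_j\cdot y=c_j\cdot S_g(c_0)=S_g(c_j\cdot c_0)=S_g(c_j)=c_{j+I}$. Iterating in both directions, $y^N=c_{NI}$ for all $N\in\Z$ (so $y$ is invertible with $y^{-1}=c_{-I}$) and $x\cdot y^N=c_{(N+1)I-n+1}$, each of these being the corresponding nonzero generator; rank one also forces the product to be commutative, so the only remaining product is $x^2\in\widecheck{\HW}_{2(I-n+1)}(L_0)$. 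Using $n\ge 3$ and $k\ge 2$ one has $I\ge 3n-4$, hence $0<2n-2<I$ and $0<n-1<I$, which shows that $2I-2n+2$ is congruent modulo $I$ to neither $0$ nor $1-n$; therefore $\widecheck{\HW}_{2I-2n+2}(L_0)=0$ and $x^2=0$. Consequently the $\Z_2$-algebra homomorphism $\Z_2[x,y,y^{-1}]/(x^2)\to\widecheck{\HW}_*(L_0)$ with $x\mapsto c_{I-n+1}$ and $y^{\pm 1}\mapsto c_{\pm I}$ is well defined (the two defining relations hold by the previous sentence), degree-preserving, and restricts to an isomorphism on every graded piece, since each nonzero degree is hit by exactly one monomial $x^ay^b$ with $a\in\{0,1\}$; this is the asserted graded ring isomorphism.

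The main obstacle is not a calculation but making sure the module property together with the known additive structure really pins down every product. This hinges on three points: identifying the shift $I(g)=I$, where Lemma~\ref{lem: computingI(g)} and the $\mathcal L$-periodicity are essential; the triviality of $S_{g_0},S_{g_1}$, which is automatic once one is on a rank-$\le 1$ group; and, above all, the degree count forcing $x^2=0$. The last point is exactly where $k\ge 2$ enters: for $k=1$ one has $I=2n-2$, so $x^2$ lands in degree $I$, where $\widecheck{\HW}_*(L_0)\ne 0$, and the present method cannot decide whether $x^2$ vanishes — which is precisely why the statement excludes $k=1$.
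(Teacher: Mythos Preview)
Your proof is correct and follows essentially the same route as the paper's proof of Theorem~\ref{thm: computationofringvshaped}: both arguments exploit that $\widecheck{\HW}_*(L_0)$ has rank at most one in each degree, use the module property of $S_g$ (together with $S_{g_0}=S_{g_1}=\id$ from the rank-one constraint, as in Lemma~\ref{lem: constidentity}) to identify $c_j\cdot c_I=c_I\cdot c_j=c_{j+I}$, and then kill $x^2$ by the degree check you carried out, which is exactly where $k\ge 2$ is needed. One small correction: product-index-positivity comes from Proposition~\ref{cor: indexpositicityinexample} rather than Proposition~\ref{prop: ident_diag_fiber}, and the value $I(g)=I$ is the explicit Maslov index computation \eqref{eq: Igcomp} via Lemma~\ref{lem: computingI(g)}, not the support-preservation heuristic (which only gives $I(g)\in I\Z$).
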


Next, note that the ring structures on $\HW_*(L)$ and $\widecheck{\HW}_*(L)$ are in general related by a ring homomorphism
$$
\HW_*(L) \rightarrow \widecheck{\HW}_*(L),
$$
which is called the \emph{Viterbo transfer map}. This map is defined as a continuation map in a standard way in Floer theory, see Section \ref{sec: viterbomap}. We prove that, due to the simplicity of the graded group structure, the Viterbo transfer map is in fact injective (Theorem \ref{thm: computationoftransfermap}). We therefore obtain the ring $\HW_*(L_0)$ as a subring in $\widecheck{\HW}_*(L_0)$.
\begin{proposition}
For $n \geq 3$ and $k \geq 2$, we have a graded ring isomorphism
$$
\HW_*(L_0) \cong \Z_2[x,y]/(x^2) $$
where $|x| = I - n+1$ and $|y| = I$ with $I = (n-2)(k+1) +2$.
\end{proposition}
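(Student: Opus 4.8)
The plan is to read off the ring structure of $\HW_*(L_0)$ from that of $\widecheck{\HW}_*(L_0)$ by means of the Viterbo transfer map, exactly as sketched in the introduction. Recall from Section \ref{sec: viterbomap} that the Viterbo transfer map
$$
\iota \colon \HW_*(L_0) \longrightarrow \widecheck{\HW}_*(L_0)
$$
is a unital homomorphism of graded rings, and that it is injective by Theorem \ref{thm: computationoftransfermap}. Hence $\HW_*(L_0)$ is isomorphic, as a graded ring, to its image $\im\iota$, which is a graded subring of $\widecheck{\HW}_*(L_0)$. Using the computation of $\widecheck{\HW}_*(L_0)$ obtained above I would fix an identification
$$
\widecheck{\HW}_*(L_0) \cong \Z_2[x,y,y^{-1}]/(x^2), \qquad |x| = I - n + 1, \quad |y| = I,
$$
so that the task becomes: locate the subring $\im\iota$ inside $\Z_2[x,y,y^{-1}]/(x^2)$.

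To pin down $\im\iota$ I would feed in the graded group structure of $\HW_*(L_0)$, which is computed in \cite{KiKwLe18}: $\HW_d(L_0) \cong \Z_2$ exactly when $d = IN$ for some integer $N \geq 0$ or $d = IN - n + 1$ for some integer $N \geq 1$, and $\HW_d(L_0) = 0$ otherwise; in particular $\HW_*(L_0)$ vanishes in all negative degrees, since $I - n + 1 = (n-2)k + 1 \geq 3$ when $n \geq 3$ and $k \geq 2$. Since $\widecheck{\HW}_*(L_0)$ has rank at most one in every degree, the injection $\iota$ must be an isomorphism onto $\widecheck{\HW}_d(L_0)$ whenever $\HW_d(L_0) \neq 0$ and must vanish in the remaining degrees. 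Under the identification above the degree-$IN$ line of $\widecheck{\HW}_*(L_0)$ is spanned by $y^N$ and the degree-$(IN - n + 1)$ line by $xy^{N-1}$, so as a graded $\Z_2$-subspace $\im\iota$ is spanned precisely by the monomials $y^N$ ($N \geq 0$) and $xy^N$ ($N \geq 0$) --- that is, by the monomials of the polynomial subring $\Z_2[x,y]/(x^2) \subset \Z_2[x,y,y^{-1}]/(x^2)$.

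It then remains to see that $\im\iota$ actually equals this subring, which is formal once the group-level description is in place. As $\iota$ is a unital ring homomorphism, $\im\iota$ contains the unit $1 = y^0$; and since $\HW_{I-n+1}(L_0)$ and $\HW_I(L_0)$ are nonzero, both ring generators $x$ and $y$ lie in $\im\iota$, so $\im\iota \supseteq \Z_2[x,y]/(x^2)$. Conversely, every non-negative-degree element of $\widecheck{\HW}_*(L_0)$ already belongs to $\Z_2[x,y]/(x^2)$: a monomial $y^N$ or $xy^N$ has non-negative degree exactly when $N \geq 0$, using $I > n - 1$. Hence $\im\iota = \Z_2[x,y]/(x^2)$, and combining with $\HW_*(L_0) \cong \im\iota$ gives the claimed graded ring isomorphism $\HW_*(L_0) \cong \Z_2[x,y]/(x^2)$ with $|x| = I - n + 1$ and $|y| = I = (n-2)(k+1)+2$.

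The step with real content --- and therefore the main obstacle --- is the injectivity of the transfer map in Theorem \ref{thm: computationoftransfermap}. I expect this to be extracted from the Viterbo-type long exact sequence relating $\HW_*(L_0)$, $\widecheck{\HW}_*(L_0)$ and the positive part (in the style of \cite{CiOa18, Ue19}), using crucially that each of the groups involved has rank at most one in each degree and that their supports are sufficiently ``out of phase'' to force the connecting homomorphisms to vanish. Granting injectivity, everything else above is routine bookkeeping; the one point I would verify with care is that $\iota$ is genuinely unital, which together with the nonvanishing of $\HW_*(L_0)$ in the two degrees $I - n + 1$ and $I$ is what excludes degenerate candidates for $\im\iota$ such as the subring generated by $x$ and $y^2$.
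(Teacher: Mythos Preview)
Your argument is correct and is essentially the paper's own proof: both identify $\HW_*(L_0)$ with the image of the injective Viterbo transfer map inside $\widecheck{\HW}_*(L_0)\cong\Z_2[x,y,y^{-1}]/(x^2)$ and then use the known graded group structure to pin that image down as $\Z_2[x,y]/(x^2)$. The only cosmetic difference is that the paper pushes individual products $B_{MI}\cdot B_{NI}$ through $f$ and pulls back along injectivity, whereas you argue directly that $\im\iota$ coincides with the nonnegative-degree subring; these are equivalent.

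One correction to your closing expectation: the injectivity in Theorem~\ref{thm: computationoftransfermap} is \emph{not} obtained from a Viterbo-type long exact sequence. The paper instead proves it by an explicit chain-level computation (Proposition~\ref{prop: computationofcontinuation}): one chooses the admissible and v-shaped Hamiltonians to agree on the region $\{r\geq 1\}$ where the relevant chords live, and an energy estimate forces every continuation strip between identified chords to be constant, so the transfer map sends each generator $B_d$ to the corresponding $C_d$. This is more hands-on than the long-exact-sequence approach you anticipated, but it yields the stronger statement that $f$ is the ``obvious inclusion'' on generators, which is exactly what both proofs then feed into.
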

We remark that the wrapped Floer homology of a cotangent fiber in a cotangent bundle is computed in terms of the homology of the path space of the base manifold in \cite{AbPoSc08}. Therefore if $k=1$ which is the case when $L_0$ is a cotangent fiber in $T^*S^n$, we already know that $\HW_*(L_0) \cong \Z_2[x]$ as graded rings with $|x| = n-1$.

\subsection{Organization of the paper}

In Section 2 we describe v-shaped wrapped Floer homology and its independence of fillings provided with the notion of index-positivity of contact structures. In Section 3 we define certain real Lagrangians in $A_k$-type Milnor fibers and compute the group structure of the v-shaped wrapped Floer homology of them using a Morse--Bott spectral sequence. Section 4 is devoted to the construction of the Seidel operator by which we compute the ring structure of the v-shaped wrapped Floer homology. In the last section we conclude the computation of the ring structure of the ordinary wrapped Floer homology via a Viterbo transfer map.


\subsection*{Acknowledgements} The authors would like to thank Urs Fuchs, Jungsoo Kang, Yong-Geun Oh, and Otto van Koert for helpful discussions and comments, Chi Hong Chow for careful proofreading.
A part of this article was written during the second author's visit to the Research Institute for Mathematical Sciences at Kyoto University. He deeply thanks for their warm hospitality. 

\section{V-shaped wrapped Floer homology}\label{sec: vhw}


\subsection{V-shaped admissible Hamiltonians} Let $(W, \ow = d\lda)$ be a Liouville domain with a Liouville form $\lda$. Its boundary $\Sigma := \p W$ admits a contact form $\alpha : = \lda|_{\Sigma}$ and hence a contact structure $\xi = \ker \alpha$. Let $L$ be an exact  Lagrangian in $W$ which is \emph{admissible} in the sense that $L$ intersects $\Sigma$ transversely in a Legendrian $\mathcal{L} := L \cap \Sigma$, and the Liouville vector field $X_{\lda}$ is tangent to $L$ near the boundary. For grading purpose, see \cite[Remark 4.6]{Ri13}, we assume throughout this paper the following topological conditions.
\begin{equation}\label{eq: topconditions}
\text{$\pi_1(W, L), \pi_1(W), \pi_1(\Sigma), \pi_1(\Sigma, \mathcal{L}), c_1(TW), c_1(\xi), c_1(TW, TL), c_1(\xi, T\mathcal{L})$ vanish.}
\end{equation}

\begin{remark}
One can work with less topological conditions, but the above ones make the construction of Floer homology reasonably simpler and are all fulfilled in our examples.
\end{remark}

We denote the completions of $W$ and $L$ by 
$$
\widehat W = W \cup_{\p W} ([1, \infty) \times \Sigma), \quad \widehat L = L \cup_{\p L} ([1, \infty) \times \mathcal{L}),
$$
respectively. Denoting by $r$ the coordinate of $[1, \infty)$ in the completion, the Liouville form $\lda$ is completed by $\widehat \lda = r \alpha$ in $[1, \infty) \times \Sigma$. We denote the spectrum of Reeb chords on the contact boundary by
$$
\Spec(\Sigma, \alpha, \mathcal{L}) : = \{T \in \R \;|\; \text{$T$ is a period of a Reeb chord on $(\Sigma, \alpha)$ whose end points are in $\mathcal{L}$}\}.
$$

\begin{definition}\label{def: vshapedadmissible} 

A Hamiltonian $H :\widehat{W} \to \R$ is called \emph{v-shaped admissible} if
\begin{enumerate}
\item Hamiltonian 1-chords with end points in $L$ are non-degenerate;
\item $H \leq 0$ in a small neighborhood of the boundary $\partial W = \{ r=1\}$, and $H \geq 0$ elsewhere;
\item $H(r, p) = h(r)$ for a convex function $h$ on $\R_+$ such that $h(r) =\mathfrak{a} r+ \mathfrak{b}$ for $r \gg 1$ where $\mathfrak{a} \not \in \Spec(\Sigma, \alpha, \mathcal{L})$, $\mathfrak{b} \in \R$.
\end{enumerate}

We denote the set of Hamiltonian $1$-chords of $H$ in $\widehat W$ relative to $\widehat L$ by $\mathcal{P}_{\widehat L}(H)$.
\end{definition}

\begin{remark}
The first condition can always be achieved by perturbing the Lagrangian $\widehat L$, see the proof in \cite[Lemma 4.3]{Ri13}. The main difference from the definition of the usual admissible Hamiltonians for ordinary (non-v-shaped) wrapped Floer homology is the second condition.
\end{remark}

\begin{remark}
We use the convention that $\ow(X_H, \cdot) = -dH.$
\end{remark}

\subsection{Floer chain complex} \label{sec: Floerchaincomplex} Since we have assumed $\pi_1(W, L) = 0$, every Hamiltonian 1-chord relative to $\widehat L$, i.e. the end points are in $\widehat L$, is contractible. Let $x:[0, 1] \rightarrow \widehat W$ be a Hamiltonian 1-chord. By choosing a capping half disk of $x$, we obtain a symplectic trivialization $\tau_x : x^*T\widehat W \rightarrow [0, 1]\times \R^{2n}$ along $x$ such that $\tau_x(T_{x(t)}\widehat L) = t \times \Lambda_0$ where $\Lambda_0$ denotes the horizontal Lagrangian in $\R^{2n}$. We associate a path of symplectic matrices $\Phi_x: [0, 1] \rightarrow Sp(2n)$ to $x$ by
$$
\Phi_x(t) : = \tau_x(t) \circ d\phi_H^t(x(0)) \circ \tau_x(0)^{-1}.
$$
The \emph{Maslov index} of $x$ is then defined by
$$
\mu(x) : = \mu_{\RS}(\Phi_x\Lambda_0, \Lambda_0)
$$
where $\mu_{\RS}$ denotes the Robbin--Salamon index for paths of Lagrangians, defined in \cite{RoSa93}. Under the topological assumptions \eqref{eq: topconditions}, the Maslov index of $x$ does not depend on the choice of capping half disks. See \cite[Section 2.1]{KiKwLe18} for more details. 

For a v-shaped admissible Hamiltonian $H$ and a time-dependent family of admissible (i.e. contact type at the cylindrical end) almost complex structures $J = \{J_t\}$ on $\widehat W$, we define the filtered Floer chain complex $(\CF_*^{<c}(L;H, J), \p)$ with $c$ as follows. Define a Hamiltonian action functional on the path space of $\widehat W$ relative to $\widehat L$ by
\begin{equation}\label{eq: Hamactfunl}
\mathcal{A}_H (x) : =- g_{\widehat L}(x(1)) + g_{\widehat L}(x(0)) + \int_0^1 x^*\widehat \lda - \int_0^1 H(x(t)) dt
\end{equation}
where $\lda|_{\widehat L} = dg_{\widehat L}$ for some smooth function $g_{\widehat L}$. Then the critical points of $\mathcal{A}_H$ are exactly the elements of $\mathcal{P}_{\widehat L}(H)$. The chain group $\CF_*^{<c}(L;H, J)$ is the $\Z_2$-vector space generated by Hamiltonian 1-chords relative to $\widehat L$ with action less than $c$, in other words,
$$
\CF_*^{<c}(L;H, J) := \bigoplus_{\substack{x \in \mathcal{P}_L(H)\\ \mathcal{A}_H(x) < c}} \Z_2\langle x \rangle.
$$
We grade the chain complex by
$$
|x| : = \mu(x) - \frac{n}{2}.
$$

The differential $\p$ of the Floer chain complex counts the number of a moduli space of Floer strips. We define 
\begin{eqnarray*}
\widetilde{\mathcal{M}}(x_-, x_+;H, J): =\{u:\R\times [0,1]\to \widehat{W} &|&  \p_s u+J_t(\p_t u -X_H(u))=0,  \\
&& \lim_{s\to \pm \infty}u(s,t)=x_{\pm}(t),\\
&& u(s,0),u(s,1)\in \widehat{L}\}.
\end{eqnarray*}
For $x_- \neq x_+ \in \mathcal{P}_{\widehat L}(H)$, we have a free $\R$-action on $\widetilde{\MM}(x_{-},x_{+} ;H,J)$ given by translation with respect to the $s$-parameter. Denote the quotient space by
$$
{\MM}(x_{-},x_{+} ;H,J) : = \widetilde{\MM}(x_{-},x_{+} ;H,J)/\R.
$$
For generic $J$, the moduli space ${\MM}(x_{-},x_{+} ;H,J)$ is a smooth manifold of dimension $|x_+| - |x_-|-1$. The differential $\p: \CF_k^{<c}(L;H, J) \rightarrow \CF_{k}^{<c}(L;H, J)$ is defined by counting the elements of ${\MM}(x_{-},x_{+} ;H,J)$ modulo $2$ as
$$
\p(x_+) = \sum_{\substack{x_+ \in \PP_{\widehat L}(H) \\ |x_-| = |x_+|-1}} \#_{2} {\MM}(x_{-},x_{+} ;H,J) \cdot x_-.
$$

We define the chain complex of action window $(a, b)$, for $a < b \in \R$, by
$$
\CF_*^{(a, b)}(L;H, J) : = \CF_*^{<b}(L;H, J)/\CF_*^{<a}(L;H, J)
$$
with the induced differential by $\p$. Then the filtered Floer homology $\HF_*^{(a, b)}(L;H, J)$ is defined to be the homology of the chain complex $(\CF_*^{(a, b)}(L;H, J), \p)$.


\begin{figure}[htb]
	\centering
	\begin{overpic}[width=150pt,clip]{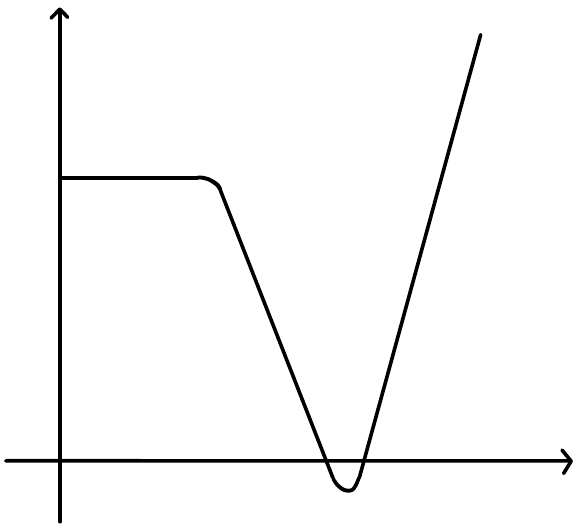} 
	\dashline{3.0}(47,18)(47,90)
	\put(145,5){$r$}
	\put(27,95){(I)}
	\put(53,95){(II)}
	\put(88,20){$1$}
	\put(45,5){$\delta$}
	\put(50,60){$\mathfrak{-a}$}
	\put(110,60){$\mathfrak{a}$}
	\put(5,88){$\mathfrak{c}$}
	\end{overpic}
	\caption{V-shaped Hamiltonian}
	\label{fig: vshapedHam}
\end{figure}

\subsection{V-shaped wrapped Floer homology} \label{sec: continuationamap} For two v-shaped admissible Hamiltonians with $H_+ \leq H_-$ and generic almost complex structures $J_+$ and $J_-$ (not necessarily distinct), 
we have a continuation map
\begin{equation}\label{eq: continuation}
f_{H_{+},H_{-}} : \HF_*^{(a, b)}(L;H_+, J_+) \rightarrow \HF_*^{(a, b)}(L;H_-, J_-).
\end{equation}
Let us briefly review its construction. We take a one parameter family $\{ (H_s,J_s)\}_{s \in \R}$ of Hamiltonians and almost complex structures such that
\begin{itemize}
\item $
(H_s,J_s) = \begin{cases} (H_{-},J_{-}) & s\leq -R \\ (H_+,J_+) & s\geq R \end{cases}
$
 for some sufficiently large $R >0$;
 \item $H_s$ is non-increasing with respect to $s$, i.e. $\frac{\p H_s}{\p s} \leq 0$.
 \end{itemize}
We define the moduli space $\mathcal{M} ( x_-, x_+ ; H_s, J_s)$ of parametrized Floer solutions by 
\begin{eqnarray*}
 \mathcal{M} ( x_{-}, x_+ ; H_s, J_s) = \{ u : \R \times [0,1] \to \widehat{W} &|&  \p_s u+J_{s,t}(\p_t u -X_{H_s} (u))=0,  \\
& & \lim_{s\to \pm \infty} u(s,t)=x_{\pm}(t),\\
& & u(s,0),u(s,1)\in \widehat{L} \}
\end{eqnarray*}
for $x_{-} \in \PP_{\widehat{L}} (H_-)$ and $x_+ \in \PP_{\widehat{L}}(H_+)$. For generic homotopies $\{(H_s,J_s)\}$, the moduli space is a smooth manifold of dimension $|x_+| - |x_-|$. The chain level continuation map $f_{H_+,H_-} : \CF_*^{(a,b)} (L;H_+,J_+) \to \CF_*^{(a,b)} (L;H_-,J_-) $ is then defined by 
$$ 
f_{H_+,H_{-}} (x_+) = \sum_{ \substack{x_{-} \in \PP_{\widehat{L}} (H_{-})\\ \mu(x_{-}) = \mu(x_{+})}} \#_{2} \MM(x_-,x_+;H_s,J_s) \cdot x_{-}.
$$ 
A standard compactness result in Floer theory, with the condition that $H_s$ is non-increasing with respect to $s$, shows that $f_{H_+,H_{-}}$ is a chain map. The induced map on the homology is called a continuation map, which we denote still by $f_{H_+,H_-}$. This yields a direct system on Floer homology $\HF_*^{(a, b)}(L;H, J)$ over the set of pairs of a v-shaped admissible Hamiltonian $H$ and an almost complex structure $J$.

For a given action window $(a, b)$, we can take a cofinal family of v-shaped admissible Hamiltonians such that $H = h(r)$ is of the form in Figure \ref{fig: vshapedHam}. We can choose the constants $\delta$, $\mathfrak{c}$ so that Hamiltonian 1-chords in the region corresponding to (I) and (II) in Figure \ref{fig: vshapedHam} are not in the action window; see \cite[Section 3.2.2]{Da18} and the proof in \cite[Proposition 2.9]{CiFrOa10} for more details. Over such a cofinal family, we define the \emph{v-shaped wrapped Floer homology of $L$ of action $(a, b)$} to be the direct limit 
$$
\widecheck{\HW}^{(a, b)}_*(L) : = \varinjlim_{\frak{a}} \HF_*^{(a, b)}(L;H, J).
$$ 
We define the \emph{(full) v-shaped wrapped Floer homology of $L$} by
$$
\widecheck{\HW}_*(L) : = \varinjlim_{b} \varprojlim_a \widecheck{\HW}^{(a, b)}_*(L).
$$

\begin{remark}
As a group the v-shaped wrapped Floer homology is isomorphic to the Lagrangian Floer homology for the trivial Lagrangian cobordisms defined in \cite{CiOa18} (up to minor conventional differences). Moreover, in \cite[Section 3.3.2]{Da18}, it is shown that the v-shaped wrapped Floer homology is group isomorphic to the Lagrangian Rabinowitz Floer homology. This is parallel to the fact in \cite{CiFrOa10} that the v-shaped symplectic homology is isomorphic to the Rabinowitz Floer homology.
\end{remark}

\subsection{Ring structure on $\widecheck{\HW}(L)$}
In this section, we outline a construction of the ring structure of the v-shaped wrapped Floer homology $\widecheck{\HW}_*(L)$. We first define a half-pair-of-pants product
$$
\HF(L;H_1,J_1) \otimes \HF(L;H_2,J_2) \to \HF(L;H_0,J_0),
$$
for pairs $(H_i,J_i)$, $i=0,1,2$, of a v-shaped admissible Hamiltonian and a time-dependent almost complex structure.

Let $S$ be a two dimensional disk with three boundary points $z_0,z_1,z_2 \in \p S$ removed, and let $j$ be a complex structure on $S$.
To introduce the notion of \emph{strip-like ends} near the boundary punctures, let us consider the semi-infinite strips
$$ Z^{\pm} = \R^{\pm} \times [0,1],$$
which is equipped with the standard complex structure, namely $j \frac{\p}{\p s} = \frac{\p}{\p t}$. 

We consider a \emph{positive} strip-like end near $z_i$ for $i=1,2$, that is, a holomorphic embedding
$$\epsilon_i : \R^+ \times [0,1] \to S$$
such that
$$ \epsilon_i^{-1} (\p S) = \R^+ \times \{0,1\} \text{ and }  \lim_{s \to \infty} \epsilon_{i}(s, \cdot) = z_i.$$ 
We also consider a \emph{negative} strip-like end near $z_0$ in a similar way. We further require that the images of $\epsilon_i$ are pairwise disjoint.

We choose the following data to define the moduli space of half-pair-of-pants.
\begin{itemize}
\item A family of almost complex structures $J^S$ parametrized by $S$, which is of contact type at the cylindrical ends and satisfies $J^S_{\epsilon_i(s,t)} =J_i^t$ for each $i=0,1,2$.

\item A 1-form $\beta \in \Omega^1 (S)$ such that $\epsilon_i^* \beta = dt$ and its restriction to the boundary $\p S$ is zero.

\item A family of Hamiltonians $H^S = \{ H^S(z, \cdot ) \in C^{\infty} (\widehat{W})\}_{z \in S}$ parametrized by $S$ such that $H^S (z, \cdot)$ is a v-shaped admissible Hamiltonian on $\widehat{W}$ for each $z\in S$ and that $H^S( \epsilon_i(s,t),\cdot)  = H_i^t(\cdot)$ for each $i=0,1,2$ and $ d(H^S(\cdot, w) \beta) \leq 0 $ for all $w \in \widehat{W}$.
(The last property is necessary to apply the maximum principle.)

\end{itemize}

Applying Stokes' theorem to $H^S (\cdot, w) \beta$ for some $w \in \widehat{W}$, we observe that the sum of $H_1$ and $H_2$ is less than or equal to $H_0$. Let us assume the Hamiltonians $H_0,H_1, H_2$ satisfy this condition from now on.

\begin{definition}\label{def: ringFloereq}
Let $x_i \in \PP_{\widehat L}(H_i)$ be a Hamiltonian chord for $i=0,1,2$. The \emph{moduli space of half-pair-of-pants} $\MM(x_0, x_1, x_2;\beta,H^S,J^S)$ consists of all maps $u: S \to \W$ of finite energy, which satisfy the following conditions.
\begin{equation*}\label{eq : asymptotic}
\begin{cases}
u (\p S) \subset \widehat{L},\\
\displaystyle{ \lim_{s \to -\infty} u(\epsilon_0(s,t))}=x_0(t),\\
\displaystyle{\lim_{s \to +\infty} u(\epsilon_i(s,t))}= x_i(t) \text{ for } i=1,2,\\
(du - X_{H^S} \otimes \beta)^{0,1} := \frac{1}{2}\left( ( du - X_{H^S} \otimes \beta) + J^S \circ (du - X_{H^S} \otimes \beta) \circ j \right) =0,
\end{cases}
\end{equation*}
where $X_{H^S}$ is the Hamiltonian vector field associated to $H^S$.
\end{definition}

For a generic pair $(H^S, J^S)$, the moduli space $\MM(x_0, x_1, x_2;\beta,H^S,J^S)$ is a smooth manifold, and its dimension is equal to
$$
\dim \MM(x_0, x_1, x_2;\beta,H^S,J^S) = \mu(x_1) + \mu(x_2) - \mu(x_0) -\frac{n}{2}.
$$
As a consequence, the moduli space is zero-dimensional if $|x_0| = |x_1| + |x_2|$. We define the half-pair-of-pants product by
$$ x_1 \cdot x_2  = \sum_{\substack{x_0 \in \PP_L(H_0)\\ |x_0| = |x_1| + |x_2|}} \#_2 {\MM(x_0, x_1, x_2;\beta,H^S,J^S)} \cdot x_0$$
for $x_i \in \PP_{\widehat L}(H_i), i=1,2$ and extend this $\Z_2$-linearly to Floer chain complexes.

The product structure descends to homology as it satisfies the Leibniz rule. Arguing as in \cite[Section 10]{CiOa18}, we can show that the product structure induces a map
$$ \HF^{(a_1,b_1)} (L;H_1,J_1) \otimes \HF^{(a_2,b_2)} (L;H_2,J_2) \to \HF^{(a_0,b_0)}(L;H_0,J_0),$$
for any $a_i<b_i$, $i=0,1,2$ such that $a_i < 0 <b_i$ for $i=1,2$, $a_0 = \text{max} \{ a_1+b_2,a_2+b_1\}$ and $b_0 = b_1+b_2$.

Since the product structure is compatible with the continuation map, we get a product structure on the v-shaped wrapped Floer homology by first taking the inverse limit as $a_1,a_2 \to -\infty$ and then taking the direct limit as $b_1,b_2 \to \infty$. Furthermore, the product structure is associative and carries a strict unit. We refer the readers to \cite{Ab15} and \cite{Ri13} for more details. In summary, the v-shaped wrapped Floer homology has a unital associative ring structure.



\begin{remark}
The ring $\widecheck{\HW}_* (L)$, as well as $\HW_*(L)$, may not be commutative, see \cite[Section 6]{Ri13}.
\end{remark}

\subsection{Independence of fillings} \label{sec: indfill} In this section, we show that the ring structure of the v-shaped wrapped Floer homology $\widecheck{\HW}_*(L)$ can be defined purely in the symplectization part $\R_+ \times \p W$ of the completion $\widehat W$, provided with an index-positivity condition in Definition \ref{def: prodindpos}

\subsubsection{Stretching the neck} \label{sec: stretchingtheneck}
The key technique to show that $\widecheck{\HW}_*(L)$ can be defined in the symplectization part (independently of the filling) is the \emph{stretching-the-neck operation} in (relative) symplectic field theory. In this section, we briefly describe this operation following the idea in \cite[Lemma 2.4]{CiOa18}; see also \cite[Section 2.6]{ChCoRi19}.

Let $H: \widehat W \rightarrow \R$ be a v-shaped admissible Hamiltonian.  Take a small neighborhood $I_{\delta_0} \times \Sigma$ of the contact hypersurface $\{\delta_0\} \times \Sigma$ in $\widehat W$ where $\delta_0 > 0$ is sufficiently small and $I_{\delta_0}$ is a closed interval containing $\delta_0$. Since $H$ is v-shaped, we may assume that $H$ is constant in the neighborhood $I_{\delta_0} \times \Sigma$. Let $J$ be a cylindrical or equivalently SFT-like almost complex structure on $\widehat W$, meaning that
\begin{itemize}
\item $J$ preserves the contact structure;
\item $J$ is invariant under the Liouville direction in the neighborhood $I_{\delta_0} \times \Sigma$;
\item $J$ sends the Liouville vector field $r\p_r$ to the Reeb vector field $R_{\alpha}$.
\end{itemize}

For a deformation parameter $R > 0$, we pick a diffeomorphism $\phi_R$ from the interval $[-R, R]$ to the interval $I_{\delta_0}$ with $\phi_{R}'(\pm R) = 1$. This induces a diffeomorphism $\phi_R \times \id: [-R, R]\times \Sigma \rightarrow I_{\delta_0}\times \Sigma$. Let $J_0$ be a compatible almost complex structure on $[-R, R]\times \Sigma$ which is SFT-like in the above sense and such that $J_0|_{\{0\}\times \Sigma}$ coincides with $J|_{\{\delta_0\}\times \Sigma}$. We define an almost complex structure $J_R$ on the completion $\widehat W$ by
$$
J_R = \begin{cases} (\phi_R \times \id)_*J_0 & \text{on $I_{\delta_0} \times \Sigma$};\\ J &  \text{elsewhere.}  \end{cases}
$$
By the condition that $\phi_{R}'(\pm R) = 1$ and that $J_0|_{\{0\}\times \Sigma}$ coincides with $J|_{\{\delta_0\}\times \Sigma}$, the almost complex structure $J_R$ is well-defined. By stretching-the-neck operation we shall mean that we replace the given almost complex structure $J$ on $\widehat W$ by $J_R$, typically for an arbitrarily large $R > 0$. 

\subsubsection{Index-positivity} \label{sec: dimofmoduli} To motivate the notion of index-positivity of a contact manifold with a Legendrian, we first state the virtual dimension of moduli spaces which we shall consider later. As always in this paper, we assume the topological conditions \eqref{eq: topconditions} to ensure that all periodic Reeb orbits and chords are contractible and have well-defined indices. In addition, all of those are assumed to be non-degenerate.

Let $\chi^- = (x^-_1, \dots, x_{k_-}^-)$ and $\chi^+ = (x^+_1, \dots, x_{k_+}^+)$ be collections of Hamiltonian 1-chords in $\R_+ \times \Sigma$ relative to $L =  \R_+ \times \mathcal{L}$, and let $\Gamma = (\gamma_1, \dots, \gamma_{\ell})$ be a collection of contractible periodic Reeb orbits in $\Sigma$, and let $C = \{c_1, \dots, c_{m}\}$ be a collection of contractible Reeb chords in $\Sigma$ relative to $\mathcal{L}$. Consider the punctured disk 
$$
\mathcal{S} : = \D\setminus\{z_1^-, \dots, z_{k_-}^-, z_1^+, \dots, z_{k_+}^+, w_1, \dots, w_{\ell}, \overline w_1, \dots, \overline w_{m}\}
$$
where $z_1^-, \dots, z_{k_-}^-, z_1^+, \dots, z_{k_+}^+, \overline w_1, \dots, \overline w_{m} \in \p\D$ are boundary punctures ($z_j^+$ is positive, and $z_j^-$ and $w_j$ is negative for each $j$) and $w_1, \dots, w_{\ell} \in \text{int}(\D)$ are interior negative punctures. Take Floer data $(H, J, \beta)$ parametrized by $\mathcal{S}$ defined analogously to the case of the half-pair-of-pants.

Define the moduli space $\mathcal{M}(\chi^-, \chi^+, \Gamma, C; \beta, H, J)$ of maps
$$
u: \mathcal{S} \rightarrow \R_+ \times \Sigma
$$
such that $u$ satisfies the Floer equation (analogous to the equation in Definition \ref{def: ringFloereq}) and the Lagrangian boundary condition, and converges to the Hamiltonian 1-chord $x_j^{\pm}$ at $z_j^{\pm}$ in the sense of Hamiltonian Floer theory and converges to the periodic Reeb orbit $0 \times \gamma_j$ at $w_j$ in the sense of SFT and converges to the Reeb chord $0 \times c_i$ at $\overline w_i$ in the sense of (relative) SFT. 

The following formula, which is well-known, can be obtained by a standard way using the additive property of indices, together with virtual dimension formulas of moduli spaces given in \cite[Section 2.2]{Ek12}. Below, the Maslov index $\mu(c)$ of a Reeb chord $c$ is the one defined in \cite[Section 2.2.1]{KiKwLe18}; see also the proof of Lemma \ref{lem: computingI(g)}. 

\begin{theorem}\label{thm: virtualdim}
The virtual dimension of the moduli space $\mathcal{M}(\chi^-, \chi^+, \Gamma, C; \beta, H, J)$ is given by
\begin{equation}\label{eq: modulidimform}
\sum_{j=1}^{k_+}\mu(x_j^+) - \sum_{j=1}^{k_-}\mu(x_j^-) + \frac{n}{2}(2-k_+-k_-)-\sum_{j=1}^{\ell} (\mu_{\CZ}(\gamma_j)+n-3) -\sum_{j=1}^m \left(\mu(c_j) + \frac{n-3}{2}\right).
\end{equation}
\end{theorem}

\begin{remark}\label{rem: Ekholmdeg}
For a given Reeb chord $c$, Ekholm \cite[Section 2.2]{Ek12} introduced a degree $|c|$ of $c$, which is equal to the dimension of the moduli space of holomorphic curves from a disk with one negative puncture on the boundary that converge to the Reeb chord $c$ at the negative puncture.
The relation between our Maslov index $\mu(c)$ and the degree $|c|$ is given by
$$ |c| = \mu(c) +\frac{n-3}{2}.$$
\end{remark}

Motivated by the virtual dimension in Theorem \ref{thm: virtualdim} we define a notion of index-positivity as follows. The same definition, modulo conventional difference as in Remark \ref{rem: Ekholmdeg}, can be found in \cite[Section 9.5]{CiOa18}.

\begin{definition} \label{def: indexpositivity}
The triple $(\Sigma, \xi, \mathcal{L})$ of a contact manifold $(\Sigma, \xi)$ with a Legendrian $\mathcal{L}$ is called \emph{index-positive} if
there exists a contact form $\alpha$ for $(\Sigma, \xi)$ such that
\begin{itemize}
\item for every periodic Reeb orbit $\gamma$ in $(\Sigma, \alpha)$,  the Conley--Zehnder index $\mu_{\CZ}(\gamma)$ is greater than $3-n$;
\item for every Reeb chord $c$ in $(\Sigma, \alpha, \mathcal{L})$, the Maslov index $\mu(c)$ is greater than $\frac{3-n}{2}$.
\end{itemize}
\end{definition}
We note that if $(\Sigma, \xi, \mathcal{L})$ is index-positive, then the last two terms in \eqref{eq: modulidimform} are negative for periodic Reeb orbits and Reeb chords in Definition \ref{def: indexpositivity}.


\subsubsection{Independence of group structure} \sloppy Let $x_-$ and $x_+$ be Hamiltonian 1-chords of a v-shaped admissible Hamiltonian $H$, with $|x_+| - |x_-| = 1$, which are generators of the Floer chain complex $\CF_*^{(a, b)}(L;H, J)$. Denote the moduli space of Floer strips from $x_+$ to $x_-$ \emph{in the symplectization part} $\R_+ \times \Sigma$ by $\mathcal{M}^{\R_+ \times \Sigma}(x_-, x_+; H, J)$. To stress the difference, we denote the usual moduli space by $\mathcal{M}^{\widehat W}(x_-, x_+; H, J)$.

\begin{proposition}\label{prop: groupindependence}
If $(\Sigma, \xi, \mathcal{L})$ is index-positive, then 
$$
\mathcal{M}^{\widehat W}(x_-, x_+; H, J) = \mathcal{M}^{\R_+ \times \Sigma}(x_-, x_+; H, J).
$$
In other words, all Floer strips in $\widehat{W}$ with asymptotics in the symplectization part $\R_+ \times \Sigma$ are actually contained in $\R_+ \times \Sigma$.
\end{proposition}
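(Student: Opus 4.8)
The plan is to argue by contradiction using the stretching-the-neck operation together with the index-positivity hypothesis. Suppose there exists a Floer strip $u \in \mathcal{M}^{\widehat W}(x_-, x_+; H, J)$ which is not contained in $\R_+ \times \Sigma$; since the asymptotics $x_\pm$ lie in the symplectization part and $H$ is constant near $I_{\delta_0} \times \Sigma$, such a $u$ must cross the hypersurface $\{\delta_0\} \times \Sigma$ transversally (generically), so the same is true for the whole one-parameter moduli space as we vary $R$. We then replace $J$ by the stretched almost complex structures $J_R$ and consider the sequence of Floer strips $u_R$ obtained by continuing $u$; as $R \to \infty$, by the SFT compactness theorem (in the relative setting, since $L = \R_+ \times \mathcal{L}$ is cylindrical near the neck) this sequence converges to a broken configuration. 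The limiting building has a "top" level consisting of a Floer strip in $\widehat W$ (or rather in the part outside the neck) together with some number of holomorphic buildings in the symplectization $\R \times \Sigma$, and crucially, because $u$ actually entered the region below $\{\delta_0\} \times \Sigma$, there must be a nonempty collection of negative asymptotics along the neck: at least one closed Reeb orbit $\gamma_j$ or one Reeb chord $c_j$ appears among the breaking data of the component living in the filling $W$.

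The next step is the index bookkeeping. The relevant moduli space of Floer strips has virtual dimension $|x_+| - |x_-| = 1$, hence after quotienting by the $\R$-translation it is $0$-dimensional, and by transversality it is a finite set; a $1$-parameter family in $R$ of such strips is $1$-dimensional. Apply the dimension formula of Theorem \ref{thm: virtualdim} (specialized to $k_+ = k_- = 1$, so the term $\frac{n}{2}(2 - k_+ - k_-) = 0$) to the component in the filling that carries the negative Reeb asymptotics $\Gamma = (\gamma_1, \dots, \gamma_\ell)$ and $C = (c_1, \dots, c_m)$ with $\ell + m \geq 1$: its virtual dimension is
\begin{equation*}
\mu(x_+') - \mu(x_-') - \sum_{j=1}^\ell \bigl(\mu_{\CZ}(\gamma_j) + n - 3\bigr) - \sum_{j=1}^m \left(\mu(c_j) + \frac{n-3}{2}\right).
\end{equation*}
Since $(\Sigma, \xi, \mathcal{L})$ is index-positive, each $\mu_{\CZ}(\gamma_j) + n - 3 > 0$ and each $\mu(c_j) + \frac{n-3}{2} > 0$, so the presence of any such asymptotic strictly lowers the virtual dimension. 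Combining this with the index of the symplectization building(s), which are themselves non-negative-dimensional after removing trivial cylinders/strips and quotienting, one sees that the total expected dimension of the broken configuration is strictly less than the dimension $1$ of the family of $u_R$'s — this contradicts the fact that this broken configuration is the compactification limit of a genuine $1$-parameter family of curves, provided all the moduli spaces involved have been made regular by a generic choice of the stretched data.

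The main obstacle I expect is the transversality/regularity bookkeeping in the SFT-compactness argument: one must ensure that for generic choices of $J$ and of the neck-stretching data, every stratum of the compactified moduli space appearing in the limit is cut out transversally, so that the naive virtual dimension count is an actual dimension count and a negative virtual dimension really forces emptiness. In the presence of the Lagrangian boundary condition and possibly multiply-covered Reeb orbits or chords this requires care, but it is handled exactly as in \cite[Lemma 2.4]{CiOa18} and \cite[Section 2.6]{ChCoRi19}, which is why I would import their machinery wholesale rather than redo it. A secondary point to check is that the Lagrangian $L = \R_+ \times \mathcal{L}$ is genuinely cylindrical in a neighborhood of the neck $I_{\delta_0} \times \Sigma$ so that the relative SFT compactness applies without modification; this follows from admissibility of $L$ and the choice of $\delta_0$ small. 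Once these are in place, the index inequality from index-positivity closes the argument, and the reverse inclusion $\mathcal{M}^{\R_+ \times \Sigma} \subseteq \mathcal{M}^{\widehat W}$ being tautological, we conclude equality.
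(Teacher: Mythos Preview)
Your approach is the same as the paper's---neck-stretching, SFT compactness, then an index contradiction---but there is a genuine confusion in your execution about which component of the limit building carries the dimension count.

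You write that you apply the dimension formula ``to the component in the filling that carries the negative Reeb asymptotics,'' and then produce a formula involving $\mu(x_+') - \mu(x_-')$. This is backwards. The component in the filling $W$ has no Hamiltonian asymptotics at all (the chords $x_\pm$ live near $r=1$, well inside the symplectization part), and its Reeb asymptotics at the neck are \emph{positive}, not negative. The formula you wrote is actually the correct one for the \emph{top} piece: the punctured Floer strip living in $\R_+ \times \Sigma$, with $x_\pm$ at its two strip-ends and with additional \emph{negative} punctures limiting on Reeb orbits $\gamma_j$ and chords $c_j$ at the neck. Once you apply Theorem~\ref{thm: virtualdim} to that top piece with $k_+=k_-=1$ and $x_+'=x_+$, $x_-'=x_-$, you get (after modding out the $\R$-translation)
\[
\mu(x_+)-\mu(x_-)-1-\sum_{j=1}^{\ell}\bigl(\mu_{\CZ}(\gamma_j)+n-3\bigr)-\sum_{j=1}^{m}\Bigl(\mu(c_j)+\tfrac{n-3}{2}\Bigr),
\]
which is strictly negative by index-positivity since $|x_+|-|x_-|=1$ and $\ell+m\geq 1$. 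That already contradicts regularity; you do not need to sum over the other levels or compare to the dimension of the $1$-parameter family of $u_R$'s. The paper's proof stops exactly here.

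Two smaller points: the phrase ``continuing $u$'' to obtain $u_R$ is not meaningful---you cannot continue a single solution through a change of almost complex structure. The honest statement is that if for a sequence $R_k\to\infty$ there exist strips $u_{R_k}\in\mathcal{M}^{\widehat W}(x_-,x_+;H,J_{R_k})$ entering the filling, then SFT compactness applies to that sequence. And your remark about transversality is on point: one does need to know that the top piece is cut out transversally so that negative virtual dimension forces emptiness; the paper (like its references) takes this as part of the standard package.
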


\begin{proof}
Suppose, on the contrary, that there exists a Floer strip $u: \R \times [0, 1] \rightarrow \widehat W$ which is not entirely contained in $\R_+ \times \Sigma$. In particular $u$ intersects nontrivially with a neighborhood $I_{\delta_0} \times \Sigma \subset \R_+ \times \Sigma$ considered in Section \ref{sec: stretchingtheneck}. Then the stretching the neck operation in Section \ref{sec: stretchingtheneck} produces a sequence of Floer strips $u_{R_k}: \R \times [0, 1] \rightarrow \widehat W$, with $R_k \rightarrow \infty$ as $k \rightarrow \infty$, which is  $J_{R_k}$-holomorphic (note that $H$ is constant in $I_{\delta_0}\times \Sigma$) in the neighborhood $I_{\delta_0}\times \Sigma$. By the relative SFT compactness as in \cite[Section 11.3]{BoElHoWyZe03}, $u_{R_k}$ converges to a (partially-)holomorphic building, and in particular the top piece in the limit is an element of a moduli space of the form $\mathcal{M}(x_-, x_+, \gamma_1, \dots, \gamma_{\ell}, c_1, \dots, c_m;\beta, H, J)$ in Section \ref{sec: dimofmoduli}. 


By Theorem \ref{thm: virtualdim}, the virtual dimension of the moduli space, after modding out the $\R$-action, is 
$$
\mu(x_+) -\mu(x_-) - 1 -\sum_{j=1}^{\ell} (\mu_{\CZ}(\gamma_j)+n-3) -\sum_{j=1}^m \left(\mu(c_j) + \frac{n-3}{2}\right).
$$
Since we have assumed $|x_+| - |x_-| = 1$, the dimension equals to
$$
-\sum_{j=1}^{\ell} (\mu_{\CZ}(\gamma_j)+n-3) -\sum_{j=1}^m \left(\mu(c_j) + \frac{n-3}{2}\right).
$$
By the index-positivity, it follows that the virtual dimension is necessarily negative. This gives a contradiction.
\end{proof}

It is straightforward to see that the same argument as the proof of Proposition \ref{prop: groupindependence} also works for the case of parametrized Floer strips which we count to define continuation maps. Together with Remark \ref{rem: topocondindeindipen}, we  conclude that v-shaped wrapped Floer homology, as a graded group, can purely be defined in the symplectization part $\R_+ \times \Sigma$.

\begin{corollary}
Under the index-positivity condition, the group $\widecheck{\HW}_*(L)$ does not depend on the filling $W$ and is defined in $\R_+ \times \Sigma \in \widehat W$.
\end{corollary}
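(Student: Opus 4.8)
The plan is to bootstrap from the chain-level statement of Proposition \ref{prop: groupindependence} together with the remark immediately following it. First I would fix an action window $(a,b)$ and a cofinal sequence of v-shaped admissible Hamiltonians $H_k$ (with slopes $\mathfrak{a}_k \to \infty$) of the shape in Figure \ref{fig: vshapedHam}, each taken constant in a neck region $I_{\delta_0}\times\Sigma$, together with SFT-like almost complex structures $J_k$ as in Section \ref{sec: stretchingtheneck}. By Proposition \ref{prop: groupindependence}, for each such pair the Floer differential on $\CF_*^{(a,b)}(L;H_k,J_k)$ counts only strips lying in $\R_+\times\Sigma$; hence the chain complex, and therefore $\HF_*^{(a,b)}(L;H_k,J_k)$, is computed entirely inside the symplectization. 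Since the generators themselves — Hamiltonian $1$-chords of $H_k$ relative to $\widehat L$ lying in the action window — are, for the cofinal family in Figure \ref{fig: vshapedHam}, located in the cylindrical region $\R_+\times\Sigma$ (the chords in regions (I) and (II) having been arranged out of the window), there is a canonical identification of these chain complexes with the ones built using only the data $(\Sigma,\alpha,\mathcal{L})$ and the symplectization $\R_+\times\Sigma$.

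Next I would treat the structure maps. Continuation maps between $\HF_*^{(a,b)}(L;H_k,J_k)$ and $\HF_*^{(a,b)}(L;H_{k+1},J_{k+1})$, as well as the maps defining the persistence/action-window structure $\HF^{(a,b)}\to\HF^{(a',b')}$, are themselves counts of parametrized Floer strips; the paragraph after Proposition \ref{prop: groupindependence} observes that the identical stretching-the-neck and index-positivity argument applies to them, since a parametrized strip escaping the symplectization would again break off a holomorphic building whose top piece lies in a moduli space $\mathcal{M}(x_-,x_+,\Gamma,C;\beta,H,J)$ of negative virtual dimension by Theorem \ref{thm: virtualdim} and Definition \ref{def: indexpositivity} (the parametrized case only shifts the expected dimension by one, which is already accounted for). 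Therefore the entire direct/inverse system
$$
\widecheck{\HW}_*(L)=\varinjlim_{b}\varprojlim_{a}\varinjlim_{\mathfrak{a}}\HF_*^{(a,b)}(L;H,J)
$$
is, level by level and map by map, isomorphic to the corresponding system built purely from $(\R_+\times\Sigma,\R_+\times\mathcal{L})$. Passing to the (co)limits gives the asserted filling-independent description of the group $\widecheck{\HW}_*(L)$.

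The one point requiring care — and the place I expect the main technical friction — is \emph{Remark \ref{rem: topocondindeindipen}}, i.e. checking that the grading conventions match up under this identification. The Maslov index $\mu(x)$ is defined via a capping half-disk in $\widehat W$, and a priori a chord could be capped differently inside $\R_+\times\Sigma$ than inside $\widehat W$; one must invoke the topological hypotheses \eqref{eq: topconditions} (in particular the vanishing of $\pi_1(W,L)$, $c_1(TW,TL)$ and their boundary analogues) to see that the index is independent of the capping and hence intrinsic to the symplectization data. Granting that — which is exactly what that remark records — the degrees are preserved and the isomorphism is one of graded groups, completing the proof. I would not belabor the compactness and transversality inputs, as these are the standard relative-SFT package already cited via \cite{BoElHoWyZe03} and used in the proof of Proposition \ref{prop: groupindependence}.
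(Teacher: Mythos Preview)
Your proposal is correct and follows essentially the same route as the paper: the paper simply states that the argument of Proposition \ref{prop: groupindependence} carries over verbatim to parametrized Floer strips (continuation maps) and then invokes Remark \ref{rem: topocondindeindipen} for the grading, which is exactly what you have written out in detail. Your elaboration of the direct/inverse limit bookkeeping and the explicit check that the generators in the action window lie in the cylindrical region is a reasonable expansion of what the paper leaves implicit.
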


\begin{remark}\label{rem: topocondindeindipen}
By the topological assumptions in \eqref{eq: topconditions}, the Maslov index of a Hamiltonian 1-chord can be defined in terms of the Maslov index of the corresponding Reeb chord. Therefore we do not need a filling to grade (v-shaped) wrapped Floer homology.
\end{remark}

\subsubsection{Independence of ring structure} 

Under the following a bit stronger index-positivity condition, we show that the ring structure on the v-shaped wrapped Floer homology is independent of the filling. 

\begin{definition}\label{def: prodindpos}
The triple $(\Sigma, \xi, \mathcal{L})$ of a contact manifold $(\Sigma, \xi)$ with a Legendrian $\mathcal{L}$ is called \emph{product-index-positive} if there exists a contact form $\alpha$ for $(\Sigma, \xi)$ such that
\begin{itemize}
\item for every periodic Reeb orbit $\gamma$ in $(\Sigma, \alpha)$, the Conley--Zehnder index $\mu_{\CZ}(\gamma)$ is greater than $3-n$;
\item for every Reeb chord $c$ in $(\Sigma, \alpha, \mathcal{L})$, the Maslov index $\mu(c)$ is greater than $\frac{3}{2}$.
\end{itemize}
\end{definition}

Let $x_0,x_1,x_2$ be Hamiltonian 1-chords in $\R_+ \times \Sigma$ relative to $L = \R_+ \times \mathcal{L}$. Consider the moduli space $\mathcal{M}(x_0, x_1,x_2; \beta, H^S, J^S)$ of half-pair-of-pants.

\begin{proposition} If $(\Sigma,\xi, \mathcal{L})$ is product-index-positive, then
$$\mathcal{M}^{\widehat{W}}(x_0, x_1,x_2; \beta, H^S, J^S)  =\mathcal{M}^{\R_+ \times \Sigma} (x_0, x_1,x_2; \beta, H^S, J^S).$$
In other words, all half-pair-of-pants in $\widehat{W}$ with asymptotics in the symplectization part $\R_+ \times \Sigma$ lie in the symplectization.
\end{proposition}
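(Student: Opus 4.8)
The plan is to carry out the stretching-the-neck argument of Proposition~\ref{prop: groupindependence}, now for the half-pair-of-pants domain. Suppose, for contradiction, that $u$ is a half-pair-of-pants in a $0$-dimensional moduli space — the case relevant to the product, so $|x_0| = |x_1| + |x_2|$ — whose image is not contained in $\R_+\times\Sigma$. Then $u$ meets the neck region $I_{\delta_0}\times\Sigma$ of Section~\ref{sec: stretchingtheneck}, and stretching the neck there produces a sequence $u_{R_k}$, $R_k\to\infty$, with Floer data unchanged outside the neck and $J_{R_k}$-holomorphic inside it ($H^S$ being constant there). By relative SFT compactness \cite{BoElHoWyZe03} it converges to a holomorphic building, non-trivial since $u$ entered the neck. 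Additivity of the Fredholm index forces the indices of the pieces of the building to sum to $0$; since every non-trivial curve in a symplectization level has index at least $1$ because of the $\R$-translation, the building has no symplectization levels, and every piece — the punctured disks of the \emph{top level} in $\R_+\times\Sigma$ carrying $x_0,x_1,x_2$ together with negative asymptotics at closed Reeb orbits $\gamma_j$ and Reeb chords $c_j$ of $\mathcal{L}$, and the holomorphic disks of the \emph{bottom level} in the filling, attached to them along those orbits and chords — has index $0$.

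Next examine the top level. An exactness/energy estimate on $\R_+\times\Sigma$ (the Liouville primitive restricts to $0$ on $\R_+\times\mathcal{L}$, so a punctured disk with only negative asymptotics would have non-positive energy) rules out top-level components carrying no Hamiltonian puncture, so $x_0,x_1,x_2$ are distributed over $p\in\{1,2,3\}$ components. If $p=1$ the domain has not pinched, and by Theorem~\ref{thm: virtualdim} (with $k_+=2$, $k_-=1$) the index of this single component is $\mu(x_1)+\mu(x_2)-\mu(x_0)-\tfrac n2-\sum_j(\mu_{\CZ}(\gamma_j)+n-3)-\sum_j(\mu(c_j)+\tfrac{n-3}{2})$; because $|x_0|=|x_1|+|x_2|$ the part $\mu(x_1)+\mu(x_2)-\mu(x_0)-\tfrac n2$ vanishes, and, the building being non-trivial, the remaining terms are strictly negative by index-positivity — impossible, since they must equal $0$. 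This is precisely the argument of Proposition~\ref{prop: groupindependence}, and it uses only index-positivity.

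The strengthening to product-index-positivity is forced by the case $p\geq 2$, in which the domain has pinched. A pinch separating two of the (boundary) punctures $z_0,z_1,z_2$ necessarily runs along a boundary arc and hence creates a Reeb-\emph{chord} node; since the node sits between a top-level piece and a bottom-level disk, at least one bottom-level disk $D$ carries a positive Reeb-chord asymptotic. One then uses the dimension formula for holomorphic disks in the filling with positive Reeb-chord punctures — the filling analogue of Theorem~\ref{thm: virtualdim}, expressed through the Ekholm degrees $|c|=\mu(c)+\tfrac{n-3}{2}$ of Remark~\ref{rem: Ekholmdeg}, in which each positive chord puncture contributes essentially its degree while each additional puncture costs about $\tfrac n2$ — to conclude that $\mathrm{ind}(D)=0$ is possible only if the degrees of $D$'s positive chord punctures are, on average, at most $\tfrac n2$. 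This is exactly excluded by product-index-positivity, which demands $\mu(c)>\tfrac32$, i.e.\ $|c|>\tfrac n2$, for every Reeb chord; note that the weaker index-positivity bound $\mu(c)>\tfrac{3-n}{2}$ does not suffice here. As no value of $p$ survives, no half-pair-of-pants with asymptotics in $\R_+\times\Sigma$ can leave $\R_+\times\Sigma$, which is the assertion. The main obstacle is exactly this $p\geq 2$ analysis: organizing the degenerations of the three-punctured disk, verifying that the separating pinches produce Reeb chords, and — above all — pinning down the index formula for disks in the filling precisely enough that the bound $\mu(c)>\tfrac32$ is what kills $\mathrm{ind}(D)=0$. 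The closed-Reeb-orbit asymptotics contribute negatively for free, just as in Proposition~\ref{prop: groupindependence}, and play no further role.
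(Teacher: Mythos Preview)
Your setup and the connected case ($p=1$) match the paper exactly. The disconnected case is where the approaches diverge, and the paper's route is considerably more direct than the one you sketch.

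The paper stays entirely at the \emph{top level}. A maximum-principle argument on the radial coordinate shows every top component must carry at least one of the \emph{positive} Hamiltonian ends $x_1,x_2$ (so a component with only $x_0$ is excluded and $p\le 2$; your energy estimate only rules out components with \emph{no} Hamiltonian puncture). In the two-component case, say $u_1\in\MM(x_0,x_1,\Gamma_1,C_1)$ and $u_2\in\MM(\emptyset,x_2,\Gamma_2,C_2)$, the paper simply writes out the two dimension inequalities coming from Theorem~\ref{thm: virtualdim} (illustrated with $\Gamma_i=\emptyset$, $C_i=\{c_i\}$):
\[
\mu(x_1)-\mu(x_0)-\mu(c_1)-\tfrac{n-3}{2}\ge 0,\qquad \mu(x_2)+\tfrac{n}{2}-\mu(c_2)-\tfrac{n-3}{2}\ge 0,
\]
substitutes $\mu(x_1)-\mu(x_0)=\tfrac n2-\mu(x_2)$, and obtains $-\mu(x_2)-\mu(c_1)+\tfrac32\ge 0$ together with $\mu(x_2)-\mu(c_2)+\tfrac32\ge 0$. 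Product-index-positivity ($\mu(c_i)>\tfrac32$) then forces $\mu(x_2)<0$ and $\mu(x_2)>0$ simultaneously.

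This sidesteps everything you flag as the ``main obstacle'': no dimension formula for disks in the filling is needed, no index-additivity across the full building, no ruling out of symplectization levels, and no claim that every piece has index exactly $0$. The only transversality invoked is for the top-level Floer-type moduli (standard for generic domain-dependent $J^S$), whereas your argument requires transversality for genuinely holomorphic bottom-level disks with Lagrangian boundary, which is more delicate and not set up in the paper. Your route via the filling disks is not wrong in spirit, but the paper's top-level computation reaches the contradiction using only the formula already on the table.
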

\begin{proof}
As in the proof of Proposition \ref{prop: groupindependence}, suppose on the contrary that there exists a half-pair-of-pants $u$ in $\mathcal{M}(x_0,x_1,x_2; \beta,H^S,J^S)$ that enters the filling where $x_0,x_1,x_2$ are Hamiltonian chords with $\mu(x_1) + \mu(x_2) -\mu(x_0) - \frac{n}{2}=0$. Using the neck stretching operation again, we get a sequence of half-pair-of-pants, and by the relative SFT compactness as in \cite[Section 11.3]{BoElHoWyZe03}, the sequence converges to a holomorphic building. In contrast to the Floer strip case, the top piece of the holomorphic building can be \emph{disconnected}. Below we argue by separating cases.

We first consider the case when the top piece of the holomorphic building is connected. Then it must be an element of the moduli space of the form $\MM(x_0,x_1,x_2, \gamma_1, \dots, \gamma_l, c_1, \dots, c_m; \beta, H,J)$. The virtual dimension of this moduli space is
$$ \mu(x_1) + \mu(x_2) - \mu(x_1) -\frac{n}{2} - \sum_{j=1}^{\ell} (\mu_{\CZ}(\gamma_j)+n-3) -\sum_{j=1}^m \left(\mu(c_j) + \frac{n-3}{2}\right). $$
Due to the assumption $\mu(x_1) + \mu(x_2) - \mu(x_0) -\frac{n}{2} =0$, the dimension is equal to 
$$
- \sum_{j=1}^{\ell} (\mu_{\CZ}(\gamma_j)+n-3) -\sum_{j=1}^m \left(\mu(c_j) + \frac{n-3}{2}\right),
$$ 
which is negative by product-index-positivity. This gives a contradiction.

The remaining case happens only when there are exactly two connected components in the top piece. Indeed, every component of the top piece must contain at least one of $x_1$ and $x_2$ due to the maximum principle. Therefore we now assume that the top piece has exactly two components and let $u_1$ and $u_2$ be the maps corresponding to those two components. Without loss of generality, we may assume that $u_1$ belongs to $\MM(x_0,x_1,\Gamma_1, C_1; \beta, H,J)$ and $u_2$ belongs to $\MM(\emptyset,x_2,\Gamma_2, C_2; \beta, H,J)$ for some sets $\Gamma_j$ of periodic Reeb orbits and $C_j$ of Reeb chords, $j=1,2$.
 
Consider the case when $\Gamma = \emptyset = \Gamma_2$ and $|C_1| = 1 =|C_2| $. All the other cases can be treated in a similar way. Write $C_1= \{ c_1 \}$ and $C_2 = \{ c_2\}$. Considering that the moduli space $\MM(x_0,x_1,\Gamma_1, C_1; \beta, H,J)$ has nonnegative dimension, we get
$$ \mu(x_1) - \mu(x_0) - \mu ( c_1) - \frac{n-3}{2} \geq 0.$$
Similarly, regarding $\MM(\emptyset,x_2,\Gamma_2, C_2; \beta, H,J)$, we get
$$ \mu(x_2) +\frac{n}{2} - \mu (c_2) - \frac{n-3}{2} \geq 0.$$
Plugging the equality $\mu(x_1) -\mu(x_0) = \frac{n}{2} -\mu(x_2)$ into the first inequality, we get
two inequalities
\begin{align*} - \mu(x_2) - \mu (c_1) + \frac{3}{2} \geq 0 \\ \mu(x_2) - \mu (c_2) +\frac{3}{2} \geq 0. \end{align*}
By the product-index-positivity assumption, the first inequality leads to $\mu(x_2) <0$ while the second one leads to $\mu(x_2) >0$. This is a contradiction.
\end{proof}

\begin{corollary}\label{cor: ringindependence}
Under the product-index-positivity, the ring $\widecheck{\HW}_*(L)$ does not depend on the filling $W$ and is defined purely in $\R_+ \times \Sigma \in \widehat W$.
\end{corollary}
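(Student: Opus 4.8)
The plan is to bootstrap the statement from the preceding proposition, in exactly the way the independence of the group structure (the Corollary following Proposition~\ref{prop: groupindependence}) was bootstrapped from Proposition~\ref{prop: groupindependence}. First I would observe that product-index-positivity implies index-positivity: the bound $\mu(c) > \tfrac{3}{2}$ is stronger than $\mu(c) > \tfrac{3-n}{2}$, and the Reeb-orbit condition is identical. Hence $\widecheck{\HW}_*(L)$ is already known, as a graded group, to be computable using only Floer cylinders contained in $\R_+ \times \Sigma$, and it remains to upgrade this to the ring level.

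Next I would recall how the ring structure is assembled and check that each ingredient is ``symplectization-local''. On the chain level the product counts elements of $\MM(x_0,x_1,x_2;\beta,H^S,J^S)$; choosing the cofinal family of v-shaped admissible Hamiltonians of the form in Figure~\ref{fig: vshapedHam}, all relevant $1$-chords $x_0,x_1,x_2$ in the pertinent action windows lie in $\R_+ \times \Sigma$, so the preceding proposition gives $\MM^{\widehat W} = \MM^{\R_+\times\Sigma}$ and the chain-level product is determined in the symplectization. The continuation maps entering the definition of $\widecheck{\HW}_*(L)$ --- both those comparing v-shaped admissible Hamiltonians and those relating action windows --- are counts of parametrized Floer strips, and the neck-stretching plus relative SFT compactness argument of Proposition~\ref{prop: groupindependence}, now run under the stronger product-index-positivity (which a fortiori also controls the orbit and chord indices appearing when one breaks off a parametrized strip), confines these curves to $\R_+\times\Sigma$ as well. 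The inverse and direct limits over the action windows $(a_i,b_i)$ are purely algebraic once chain complexes, differentials, products and continuation maps are fixed, and the strict unit is algebraically characterized by $c_0\cdot x = x$, hence preserved. Thus every structure map making up the ring $\widecheck{\HW}_*(L)$ is computed inside $\R_+\times\Sigma$.

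Finally, to phrase this as independence of the filling, I would take two Liouville fillings $W, W'$ of the same $(\Sigma,\xi)$ with admissible Lagrangians inducing the same $(\Sigma,\xi,\mathcal{L})$, and choose the auxiliary data (v-shaped Hamiltonians constant on a collar $I_{\delta_0}\times\Sigma$, SFT-like $J$, the one-forms $\beta$ and the $S$-families $H^S$, $J^S$) to agree on a neighborhood $\R_+\times\Sigma$ of the boundary inside each completion. By the previous paragraph the generators and all counted rigid curves --- differentials, continuation maps, half-pair-of-pants --- then literally coincide for $W$ and for $W'$, so the two rings are canonically isomorphic, with the isomorphism respecting the product. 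The point requiring care, rather than a genuine obstacle, is the interaction of the confinement statement with the limit operations: one must arrange cofinal families of Hamiltonians and the homotopies between them to be simultaneously ``neck-local'', and verify that the resulting isomorphism is independent of these choices, so that it is an honest isomorphism of rings and not merely an equality of underlying groups. This is a routine adaptation of the arguments in \cite[Section~10]{CiOa18} and \cite[Section~9.5]{CiOa18}.
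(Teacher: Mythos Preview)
Your proposal is correct and follows the same route the paper intends: the corollary is stated without proof, as an immediate consequence of the preceding proposition (confinement of half-pair-of-pants to $\R_+\times\Sigma$) combined with the earlier group-level independence. Your write-up simply makes explicit the routine checks (continuation maps, limits, unit) that the paper leaves implicit.
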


\begin{remark}\label{rem: indposMB}
In the case of a \emph{Morse--Bott type} contact form $\alpha$, the corresponding product-index-positivity 
requires periodic Reeb orbits $\gamma$ and chords $c$ to satisfy that
$$
\mu_{\CZ}(\gamma) > 3-n + \frac{1}{2}\dim S_{\gamma}, \quad
\mu(c) > \frac{3}{2} + \frac{1}{2} \dim S_c
$$
where $S_{\gamma}$ and $S_{c}$ denote the connected component of the Morse--Bott orbit space containing the periodic Reeb orbit $\gamma$ and the chord $c$ respectively. This follows from a well-known analogue of the virtual dimension \eqref{eq: modulidimform} in Morse--Bott setup; see e.g. \cite[Corollary 5.4]{Bo02}.
\end{remark}

\section{Real Lagrangians in $A_k$-type Milnor fibers}
\label{sec: realAkfiber}

\subsection{Lefschetz thimbles}\label{sec: Lefthim} For $k \geq 1$, let $V_k$ be the \emph{(completed) $A_k$-type Milnor fiber} of dimension $2n$ defined by
$$
V_k = \{z \in \C^{n+1} \;|\; z_0^{k+1} + z_1^2 + \cdots +z_n^2 = 1\}.
$$
The Milnor fiber $V_k$ admits an explicit Lefschetz fibration structure given by
\begin{equation}\label{eq: leffibak}
\pi : V_k \rightarrow \C, \quad z \mapsto z_0.
\end{equation}
Note that $\pi$ has exactly $k+1$ critical values; the $(k+1)$-th roots of unity, say $\xi_0 = 1, \xi_1 = e^{\frac{2\pi i}{k+1}}, \dots, \xi_{k} = e^{\frac{2k\pi i}{k+1}}$. We can associate a \emph{Lefschetz thimble} to each critical value $\xi_j$ in the following way: Consider the half straight line $\Gamma_j: [1, \infty) \rightarrow \C$ starting from $\xi_j$ given by $\Gamma_j(t) = \xi_j t$. For $t=1$, the fiber $\pi^{-1} (\Gamma_j(t))$ is singular and can be identified with $T^*S^{n-1}$ with the zero section collapsed to a point. For $t \neq 1$, the value $\Gamma_j(t)$ is a regular value of $\pi$, and the fiber $\pi^{-1} (\Gamma_j(t))$  is identified with $T^*S^{n-1}$. 

For later use, we describe the identification $\pi^{-1} (\Gamma_j(t)) = T^{*}S^{n-1}$ in more detail. Note that the fiber is written by 
$$
\pi^{-1}(\Gamma(t)) = \{z \in \C^n \;|\; z_1^2 + \cdots +z_n^2 = 1 - \Gamma(t)^{k+1}\}.
$$ 
In general, for $c \in \C$ nonzero, consider the level set of the quadratic polynomial
$$
Q_c:=\{z \in \C^{n+1}\;|\; z_1^2 + \cdots +z_n^2 = c\}.
$$
The value $\sqrt{c}$ is not uniquely determined, but if we fix the angle range to be $[0, 2 \pi)$, then we have only two of them. From now on, by $\sqrt{c}$ we always mean the one with the smaller angle. For example, we put $\sqrt{-1} = i$, while $-i$ also satisfies $(-i)^2 = -1$. We now define a map $\Phi_c : Q_c \rightarrow T^*S^{n-1}$ as follows: We first rotate the standard coordinate system by the angle of $\sqrt{c}$ on each coordinate, and then we send the rotated coordinate, say $u+iv$, to $(|u|^{-1}u, |u|v)$. The resulting map $\Phi_c: Q_c \rightarrow T^*S^{n-1}$ is then an exact symplectomorphism.

Under the identification $\pi^{-1} (\Gamma_j(t)) = T^{*}S^{n-1}$ via $\Phi_{1-\Gamma_j(t)^{k+1}}$, we define a subset $L_j \subset V_k$ by
$$
L_j : = \bigsqcup_{t \in [1, \infty)} \{\text{the zero section of $\pi^{-1} (\Gamma_j(t))$}\},
$$
for each $0 \leq j \leq k$. Then it is well-known that $L_j$ is a Lagrangian in $V_k$, which is called a \emph{Lefschetz thimble} associated to the critical value $\xi_j$; see \cite{KhSe02}, \cite{Se08}. 

\subsection{Real Lagrangians}\label{sec: defrealLag} We describe the Lefschetz thimble $L_j$ as the fixed point set of an anti-symplectic involution on $V_k$. For each $0\leq j \leq k$, consider the straight line $\ell_j$ in $\C$ which connects the origin and $\xi_j$. We denote the reflection on $\C$ whose axis is $\ell_j$ by $R_j: \C \rightarrow \C$. We define an involution on $\C^{n+1}$ by
$$
\rho_j :\C^{n+1} \rightarrow \C^{n+1}, \quad z \mapsto (R_j(z_0), -\overline z_0, \dots, -\overline z_n). 
$$
Then $\rho_j$ is an exact anti-symplectic involution on $\C^{n+1}$ with respect to the standard Liouville form. Observe that $\rho_j$ restricts to an anti-symplectic involution on the Milnor fiber $V_k$ with respect to the Liouville 1-form inherited from $\C^{n+1}$. We denote the restricted involution by the same notation $\rho_j : V_k \rightarrow V_k$. 

Since $\rho_j$ is an anti-symplectic involution, the fixed point set $\Fix(\rho_j)$ is a Lagrangian in $V_k$, which is called a \emph{real Lagrangian}. We note that if $k$ is even, then $\Fix(\rho_j)$ is connected, and if $k$ is odd, then $\Fix(\rho_j)$ consists of two connected components.

Now we again consider the Lefschetz fibration $\pi: V_k \rightarrow \R$ in \eqref{eq: leffibak}. Observe that the image $\pi(\Fix(\rho_j))$ lies on the line $\ell_j \subset \C$. More precisely, each connected component of the real Lagrangian $\Fix(\rho_j)$ is mapped either to the half line $\xi_j \cdot [1, \infty) \in \C$ or to the half line $-\xi_j \cdot [1, \infty] \in \C$. (The latter happens only when $k$ is odd.) Therefore, we can label the connected components of the real Lagrangians in the counter-clockwise order, say $\tilde L_0, \dots, \tilde L_k$.

\begin{example}
Consider the case when $k=1$. The anti-symplectic involutions are given by
$$
\rho(z) : = \rho_0 (z) = \rho_1(z) = (\overline z_0, -\overline z_0, \dots, -\overline z_n).
$$
Its fixed point set is then
$$
\Fix(\rho) = \{z \in \C^{n+1} \;|\; x_0^2 - y_1^2 - \cdots - y_n^2 = 1\}
$$
where $z_j = x_j + i y_j$. This has two connected components, namely
$\{x_0 > 0\}$ and $\{x_0 < 0\}$. The images of them under the Lefschetz fibration $\pi : V_1 \rightarrow \C$ correspond to $\{x \geq 1\}$ and $\{x \leq -1\}$ respectively, where $\xi_0 =1$ and $\xi_1 = -1$ are the critical values of $\pi$. We label the connected components as
$$
\tilde L_0 = \{x_0 > 0\}, \quad \tilde L_1 = \{x_0 < 0\}.
$$
\end{example}

\begin{proposition}
The connected component $\tilde L_j$ coincides with the Lefschetz thimble $L_j$.
\end{proposition}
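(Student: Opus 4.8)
The plan is to prove the equality of these two Lagrangians fiberwise over the ray $\Gamma_j([1,\infty)) = \xi_j\cdot[1,\infty) \subset \C$ with respect to the Lefschetz fibration $\pi$. Both submanifolds project onto this ray: for $L_j$ this is its definition in Section \ref{sec: Lefthim}, and for $\tilde L_j$ it was recorded in Section \ref{sec: defrealLag}. So it suffices to show that, for each $t\in[1,\infty)$, the two subsets $L_j \cap \pi^{-1}(\Gamma_j(t))$ and $\tilde L_j \cap \pi^{-1}(\Gamma_j(t))$ of the fiber coincide, and then take the union over $t$.

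First I would identify the fiber and the relevant square root. Since $\xi_j^{k+1}=1$, we have $\Gamma_j(t)^{k+1} = t^{k+1}$, so $\pi^{-1}(\Gamma_j(t)) = Q_{c(t)}$ with $c(t) = 1 - t^{k+1}$, a non-positive real number; with the branch convention of Section \ref{sec: Lefthim} this gives $\sqrt{c(t)} = i\sqrt{t^{k+1}-1}$, which is purely imaginary. Unwinding the definition of $\Phi_{c(t)}\colon Q_{c(t)}\to T^*S^{n-1}$, a point $z$ of the fiber lies over the zero section precisely when, after normalizing to $Q_1$ by dividing each coordinate by $\sqrt{c(t)}$, the result is real; equivalently $z \in \sqrt{c(t)}\cdot\R^n$, i.e. all of the coordinates $z_1,\dots,z_n$ of $z$ are purely imaginary (and for $t=1$ the fiber is singular and its collapsed zero section is the apex $(\xi_j,0,\dots,0)$). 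By definition $L_j\cap\pi^{-1}(\Gamma_j(t))$ is exactly this set. On the other hand, over the point $\Gamma_j(t)$ the first coordinate $z_0 = \xi_j t$ automatically lies on $\ell_j$ and so is $R_j$-fixed, while the remaining conditions $-\overline{z_i}=z_i$ defining $\Fix(\rho_j)$ force $z_1,\dots,z_n$ to be purely imaginary, which is compatible with the fiber equation $z_1^2+\cdots+z_n^2 = c(t)$ exactly because $c(t)\le 0$. Hence $\tilde L_j\cap\pi^{-1}(\Gamma_j(t))$ is also the purely-imaginary locus in $Q_{c(t)}$ (the apex when $t=1$), which matches the description of $L_j\cap\pi^{-1}(\Gamma_j(t))$ above. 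Taking the union over $t\in[1,\infty)$ yields $L_j=\tilde L_j$.

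I expect the only delicate point to be the bookkeeping of conventions: one must check that $c(t)$ genuinely lands on the non-positive real axis — this is where $\xi_j^{k+1}=1$ is used, exactly as in the verification that $\rho_j$ preserves $V_k$ — and that the chosen square-root branch makes $\sqrt{c(t)}$ purely imaginary, so that the zero section of $T^*S^{n-1}$ pulls back under $\Phi_{c(t)}$ precisely to the locus of purely imaginary coordinates. Once these are in place, the statement is a direct comparison of the definitions of $L_j$, $\rho_j$, and $\Phi_c$.
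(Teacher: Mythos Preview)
Your proof is correct and follows essentially the same approach as the paper: a fiberwise comparison over the ray $\xi_j\cdot[1,\infty)$, identifying both $L_j\cap\pi^{-1}(\Gamma_j(t))$ and $\tilde L_j\cap\pi^{-1}(\Gamma_j(t))$ with the purely-imaginary locus in $Q_{c(t)}$. The paper carries this out explicitly for $j=0$ in coordinates and declares the other cases analogous, whereas you treat all $j$ uniformly by invoking $\xi_j^{k+1}=1$ to see that $c(t)=1-t^{k+1}$ is independent of $j$; otherwise the arguments are the same.
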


\begin{proof}
This is just a direct computation. We work out the case $j=0$. Recall that the involution which defines $\tilde L_0$ is given by
$$
\rho_0(z) = (\overline z_0, -\overline z_1, \dots, -\overline z_n).
$$
Its fixed point set has possibly two connected components, but in any case the connected component $\tilde L_0$ is written by
\begin{align*}
\tilde L_0 &= \{z \in \C^{n+1}\;|\; x_0^{k+1} - y_1^2 - \cdots - y_n^2 = 1, \; x_0 \geq 1\}\\
&=\{z \in \C^{n+1}\;|\; y_1^2 + \cdots + y_n^2 = x_0^{k+1}-1, \; x_0 \geq 1 \}.
\end{align*}

On the other hand, the Lefschetz thimble $L_0$ is the union of the zero sections of $\pi^{-1}(\Gamma_0(t))$, $1\leq t < \infty$, under the identification $\pi^{-1}(\Gamma_0(t)) = T^*S^{n-1}$ (except for $t = 1$). This identification is precisely given by the map $\Phi_{1-\Gamma_0(t)^{k+1}}$ defined in Section \ref{sec: Lefthim}. Since $\Gamma_0(t) = t$ and hence $1-\Gamma_0(t)^{k+1} = 1-t^{k+1}$ is a positive real number in this case, the map $\Phi_{1-\Gamma_0(t)^{k+1}}$ is given by the formula
$$
z \in \pi^{-1}(\Gamma_0(t)) \longmapsto (-|y|^{-1}y, |y|x) \in T^*S^{n-1}.
$$
In particular the zero section of $T^*S^{n-1}$ corresponds to the set
$$
\pi^{-1}(\Gamma_0(t)) \cap \{x_1 =\cdots =x_n = 0\} = \{z \in \C^{n+1}\;|\; y_1^2 + \cdots + y_n^2 = \Gamma_0(t)^{k+1}-1\}.
$$
It follows that
\begin{align*}
L_0 &= \bigsqcup_{t \in [1, \infty)} \{\text{zero section of $\pi^{-1} (\Gamma_0(t))$}\} \\
&= \bigsqcup_{x_0 \in [1, \infty)} \{z \in \C^{n+1}\;|\; y_1^2 + \cdots + y_n^2 = x_0^{k+1}-1\}\\
&= \{z \in \C^{n+1}\;|\; y_1^2 + \cdots + y_n^2 = x_0^{k+1}-1, \; x_0 \geq 1 \} = \tilde L_0.
\end{align*}
This completes the proof.
\end{proof}

In short, the Lefschetz thimble $L_j$ is the connected component $\tilde L_j$ of the real Lagrangians. From now on, we will use the notation $L_j$, $0 \leq j \leq k$, for the component of the real Lagrangians.

\subsection{Real Lagrangians in plumbings} \label{sec: realplumbings}The Milnor fiber $V_k$ is Liouville isotopic to the linear plumbing $\#^{k} T^*S^n$ of cotangent bundles of the sphere, called the \emph{$A_k$-type plumbing}. In this section, we give a description of this fact, and we observe that the real Lagrangian $L_j$, $0\leq j \leq k$, can be seen as a cotangent fiber or a diagonal Lagrangian in the $A_k$-type plumbing $\#^{k} T^*S^n$.

\subsubsection{$A_k$-type plumbing} To fix notations, we briefly describe the plumbing of two cotangent bundles. The definition of the plumbing associated to the $A_k$-type Dynkin diagram is then a straightforward generalization.

Let $Q_0$ and $Q_1$ be closed manifolds of dimension $n$. Following \cite[Section 2]{AbSm12}, we define a \emph{local model of the  plumbing region} as follows.
\begin{equation}\label{eq: modelplumb}
R : = \{(x, y) \in \R^n \times \R^n \;|\; |x||y| \leq 1\}.
\end{equation}
Consider the ball $B^n \subset \R^n$ of radius $1/2$. Its disk cotangent bundle $DT^*B^n$ is identified with $B^n \times B^n$ via $ \sum {y_i dx_i} = (x_1, \dots, x_n, y_1, \dots, y_n)$, where $x=(x_1, ... ,x_n)$ is the base coordinate and $y=(y_1, ... ,y_n)$ is the fiber coordinate.
Furthermore, the disk cotangent bundle $DT^*B^n$ has the symplectic form $\sum_{i}dx_i \w dy_i$. We glue two copies of $DT^*B^n$ via the symplectomorphism $(x, y) \mapsto (y, -x)$. This is exactly how the local model $R$ looks like near the origin.

For each $j=0,1$, we take a metric $g_j$ on $Q_j$ locally flat near a fixed point $q_j$ and consider an isometric embedding
$$
(B_j^n,0) \hookrightarrow (Q_j,q_j)
$$
of the ball of radius $1/2$ centered at the origin. This induces a symplectic embedding 
$$
DT^*B_j^n \hookrightarrow DT^*Q_j
$$
for $j = 0, 1$.
We glue the disk cotangent bundles $DT^*Q_0$ and $DT^*Q_1$ using the map $(x, y) \mapsto (y, -x)$ on $DT^* B^n$ described above. Smoothing along the corner of the resulting space, we get a Liouville domain $DT^*Q_0 \# DT^*Q_1$. Its completion is called the \emph{plumbing} of $T^*Q_0$ and $T^*Q_1$ and is denoted by $T^*Q_0 \# T^*Q_1$.


In the plumbing $T^*Q_0 \# T^*Q_1$, there are two notable Lagrangians, namely \emph{cotangent fibers} (away from the plumbing region) and \emph{diagonal Lagrangians}. By a cotangent fiber in the plumbing, we mean the completion of a cotangent fiber in $DT^*Q_j$ (for some $j=0, 1$) away from the plumbing region. In the plumbing region, we have Lagrangians which can be written in the local model as
\begin{equation}\label{eq: diaglag}
D : = \{(x, y) \in R \;|\; \text{either $x_j= y_j$ or $x_j = -y_j$ for each $1\leq j \leq n$}\}.
\end{equation}
We call Lagrangians obtained by completing $D$ \emph{diagonal Lagrangians}.

\subsubsection{The Milnor fiber $V_k$ as a plumbing} To describe the Milnor fiber $V_k$ as a plumbing, we use the following neighborhood theorem. We provide its proof for later use in Section \ref{sec: fibersdiagonals} although it is fairly well-known.

\begin{proposition}\label{prop: plumbnbdthm}
Let $S_j$, $1 \leq j \leq k$, be closed exact Lagrangians in a symplectic manifold $(W,\omega)$. Suppose $S_j$'s are in the $A_k$-configuration, i.e. $S_{j}$ and $S_{j+1}$  with $1 \leq j \leq k-1$ intersect transversely at one point and there exist no other intersections between these Lagrangians. Then there exists a symplectic embedding $\Phi : \nu(\bigcup_{j} S_j) \rightarrow W$ from a neighborhood $\nu(\bigcup_{j} S_j)$ in the $A_k$-type plumbing $ \#_{j} DT^*S_j$ into $W$, which sends $S_j$ in the plumbing $\#_{j} DT^*S_j$ to $S_j$ in $W$.
\end{proposition}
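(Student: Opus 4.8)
The plan is to produce $\Phi$ by gluing together the individual Weinstein Lagrangian neighborhood theorems for the $S_j$, after first normalizing them near the intersection points so that the overlaps are governed by the gluing map $(x,y)\mapsto(y,-x)$ that defines the $A_k$-plumbing in \eqref{eq: modelplumb}. Write $p_j := S_j \cap S_{j+1}$ for $1 \le j \le k-1$. First I would invoke, for each $j$, the Weinstein neighborhood theorem to get an open neighborhood $\nu(S_j) \subset W$ and a symplectomorphism $\Psi_j : \nu(S_j) \to \nu_0(S_j) \subset DT^*S_j$ onto a neighborhood of the zero section with $\Psi_j(S_j) = 0_{S_j}$. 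Since the $S_j$ are closed and meet only at the transverse points $p_j$, I may shrink so that $\nu(S_j)\cap\nu(S_{j'}) = \emptyset$ for $|j-j'|\ge 2$ and so that $\nu(S_j)\cap\nu(S_{j+1})$ is a small ball $B_j$ around $p_j$ lying in the domains of both $\Psi_j$ and $\Psi_{j+1}$.

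The heart of the argument is a normal form near each $p_j$. Inside $DT^*S_j$ the germ of $\Psi_j(S_{j+1})$ at $\Psi_j(p_j)$ is a Lagrangian transverse to the zero section, so I would replace $\Psi_j$ by $\theta_j\circ\Psi_j$, where $\theta_j$ is a symplectomorphism of $DT^*S_j$, supported near $\Psi_j(p_j)$ and away from $\Psi_j(p_{j-1})$, that fixes the zero section and carries this germ onto the cotangent fiber $T^*_{\Psi_j(p_j)}S_j$. Such a $\theta_j$ exists by the standard Darboux--Weinstein normal form for a pair of transverse Lagrangians: take a Darboux chart near $p_j$ sending $(W,S_j,S_{j+1})$ to $(\R^{2n},\R^n\times 0,0\times\R^n)$ and interpolate it with $\Psi_j$ on an annulus between two nested balls by a Moser argument confined to symplectomorphisms fixing $0_{S_j}$. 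Carrying out the analogous correction at $p_{j-1}$, which is possible independently since $p_{j-1}\ne p_j$, I arrange that in $DT^*S_j$ both neighbors $S_{j-1}$ and $S_{j+1}$ appear near $p_{j-1}$ and $p_j$ exactly as cotangent fibers; by a further adjustment of the linear parts of the Darboux charts I also arrange that the base coordinate of $DT^*S_{j+1}$ near $p_j$ matches the linear fiber coordinate of $DT^*S_j$ there, which together with the symplectic condition forces the transition between the two identifications of $B_j$ to be precisely $(x,y)\mapsto(y,-x)$.

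After this normalization the maps $\Psi_j^{-1}$ are symplectic embeddings of neighborhoods of the zero sections $0_{S_j}\subset DT^*S_j$ into $W$ which agree on the overlaps $B_j$ after applying the gluing map used to build $\#_j DT^*S_j$; hence they descend to a well-defined smooth map $\Phi$ from a neighborhood $\nu(\bigcup_j S_j)$ in $\#_j DT^*S_j$ into $W$. It is symplectic because each $\Psi_j^{-1}$ and each gluing map is, it sends $S_j$ to $S_j$ by construction, and after shrinking $\nu(\bigcup_j S_j)$ it is injective, hence an embedding; corners are smoothed exactly as in the definition of the plumbing. Exactness of the $S_j$ is used only if one additionally wants $\Phi$ to intertwine the Liouville forms, in which case one matches primitives and absorbs the discrepancy by another Moser-type argument.

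The step I expect to be the main obstacle is the normal form at the $p_j$: arranging the correction $\theta_j$ so that it is genuinely localized near $p_j$, preserves the whole zero section of $DT^*S_j$, and above all so that the two one-sided identifications of the overlap $B_j$ differ by exactly the plumbing map $(x,y)\mapsto(y,-x)$ and not merely by some other symplectomorphism preserving the pair of transverse Lagrangian planes, the ambiguity being the $\GL(n,\R)$ acting simultaneously on base and fiber. Pinning this down requires a coordinated choice of the linear data at each $p_j$ together with the localized Moser interpolation of Step 2; once that is secured, the assembly of $\Phi$ and the shrinking to an embedding are routine.
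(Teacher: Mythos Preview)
Your outline is correct, but your route differs from the paper's in an instructive way. You work from the outside in: start with individual Weinstein neighborhoods $\Psi_j:\nu(S_j)\to DT^*S_j$ (symplectic from the outset) and then locally deform each $\Psi_j$ near $p_j$ so that the neighbor appears as a cotangent fiber and the transition is literally $(x,y)\mapsto(y,-x)$. The paper works from the inside out: it first fixes a single Darboux chart at $p_j$ sending $(S_j,S_{j+1})$ to $(\R^n\times 0,\,0\times\R^n)$ via Po\'zniak's lemma, chooses one metric $g$ on $W$ that is flat in that chart, and then defines \emph{smooth} (not yet symplectic) embeddings $\Psi_j:DT^*S_j\to W$ by the normal exponential map; in the local chart both $\Psi_j$ and $\Psi_{j+1}$ literally restrict to the inclusion $B^n\times B^n\hookrightarrow\R^{2n}$, so they glue for free, and a single global Moser argument at the end upgrades the glued map to a symplectic embedding $\Phi$.

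The trade-off is exactly the one you flagged. Your approach keeps symplecticity throughout but pays for it at the overlaps: you must kill the $\GL(n,\R)$ ambiguity and run a localized Moser interpolation that fixes the zero section, and you must check that the adjustments at $p_{j-1}$ and $p_j$ do not interfere. The paper's approach postpones symplecticity, which makes the gluing trivial (both sides are the same exponential map in a shared flat chart), and absorbs all the analytic work into one Moser step using $[\Psi^*\omega]=[\omega_{can}]$ and $\Psi^*\omega|_{S_j}=\omega_{can}|_{S_j}$. If you want a cleaner write-up, the paper's trick of choosing a single metric governed by the Po\'zniak chart is worth borrowing: it eliminates precisely the ``main obstacle'' you identified.
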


For a proof, we use the following lemma from \cite[Proposition 3.4.1]{Po94}.

\begin{lemma}[Po\'zniak]\label{lemmaofPozniak}
Let $(M,\omega)$ be a symplectic manifold of dimension $2n$ and let $L_0,L_1$ be two Lagrangian submanifolds which intersect transversely at $q \in L_0 \cap L_1$. Then there exist a neighborhood $V$ of $q$ in $\R^n \times \R^n$, a neighborhood $U$ of $q$ in $M$ and a symplectomorphism $\psi : (U,\omega) \to (V,\omega_{std})$ such that
$$ \psi(L_0 \cap U) =  \Lambda_0 \cap V \text{ and } \psi(L_1 \cap U) = \Lambda_1 \cap V.$$ 
where $\Lambda_0 = \R^n \times \{0\}$ and $\Lambda_1 = \{0\} \times \R^n$. 
\end{lemma}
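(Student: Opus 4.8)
The plan is to prove Lemma~\ref{lemmaofPozniak} by a relative Darboux argument: first straighten $L_0$ and $L_1$ to the \emph{linear} subspaces $\Lambda_0,\Lambda_1$ as smooth submanifolds, and then normalize the symplectic form by a Moser deformation that preserves those two subspaces. For the first part, pick a chart near $q$ sending $q$ to $0\in\R^{2n}$. Since $T_qL_0$ and $T_qL_1$ are transverse Lagrangian subspaces of $(T_qM,\omega_q)$ — so that $\omega_q$ identifies $T_qL_1$ with $(T_qL_0)^*$ — a linear change of coordinates achieves $T_0L_0=\Lambda_0$, $T_0L_1=\Lambda_1$; near $0$ we may then write $L_0=\{(x,f(x))\}$ and $L_1=\{(g(y),y)\}$ with $f(0)=g(0)=0$, $df(0)=dg(0)=0$, and the diffeomorphism $(x,y)\mapsto(x-g(y),\,y-f(x))$, which is the identity to first order at $0$, carries $L_0$ into $\Lambda_0$ and $L_1$ into $\Lambda_1$. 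A final linear map of the block form $(x,y)\mapsto(x,Cy)$ — which preserves both $\Lambda_0$ and $\Lambda_1$ — can be chosen so that the pushed-forward symplectic form agrees with $\omega_{std}$ at $0$. We are thus reduced to the following local model: on a ball $B\ni 0$ in $\R^{2n}$ there is a symplectic form $\omega$ having $\Lambda_0,\Lambda_1$ as Lagrangian submanifolds with $\omega|_0=\omega_{std}$, and we must find a diffeomorphism near $0$ preserving $\Lambda_0$ and $\Lambda_1$ and pulling $\omega_{std}$ back to $\omega$.

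Set $\tau:=\omega-\omega_{std}$. It is closed, vanishes at $0$, and $\tau|_{\Lambda_0}=\tau|_{\Lambda_1}=0$ because $\Lambda_0,\Lambda_1$ are Lagrangian for both $\omega$ and $\omega_{std}$. Let $\sigma:=h\tau$, where $h$ is the standard radial homotopy operator of the Poincar\'e lemma (contract with the radial vector field $R$, pull back by $z\mapsto sz$, integrate in $s$), so that $d\sigma=\tau$ and $\sigma|_0=0$. Because $\Lambda_0$ and $\Lambda_1$ are linear subspaces, they are invariant under the radial scalings and contain $R$; hence $\tau|_{\Lambda_j}=0$ forces $(\iota_R(\phi_s^*\tau))|_{\Lambda_j}=0$ for every $s$, and therefore $\sigma|_{\Lambda_0}=\sigma|_{\Lambda_1}=0$. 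Now run the Moser trick with $\omega_t:=\omega_{std}+t\tau$, $t\in[0,1]$: each $\omega_t$ is closed, equals $\omega_{std}$ at $0$ hence is nondegenerate on a uniform neighborhood of $0$, and still has $\Lambda_0,\Lambda_1$ Lagrangian. Define $X_t$ by $\iota_{X_t}\omega_t=-\sigma$; then $X_t|_0=0$, and since $\sigma|_{\Lambda_j}=0$ with $\Lambda_j$ Lagrangian for $\omega_t$ we get $X_t\in(T\Lambda_j)^{\omega_t}=T\Lambda_j$, so $X_t$ is tangent to both $\Lambda_0$ and $\Lambda_1$. Its flow $\psi_t$ (defined near $0$ for $t\in[0,1]$ after shrinking) preserves $\Lambda_0$ and $\Lambda_1$, and $\frac{d}{dt}(\psi_t^*\omega_t)=\psi_t^*(\tau+\mathcal{L}_{X_t}\omega_t)=\psi_t^*(\tau+d\iota_{X_t}\omega_t)=\psi_t^*(\tau-d\sigma)=0$, so $\psi_1^*\omega=\omega_{std}$. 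Composing the chart, the linear maps, the straightening diffeomorphism, and $\psi_1^{-1}$ yields the desired symplectomorphism $\psi$, with $\psi(L_0\cap U)=\Lambda_0\cap V$ and $\psi(L_1\cap U)=\Lambda_1\cap V$.

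The only step that requires genuine care — the rest being the standard Moser/Weinstein package — is producing a single primitive $\sigma$ of $\omega-\omega_{std}$ that vanishes on \emph{both} $\Lambda_0$ and $\Lambda_1$ simultaneously, rather than on just one of them or merely at the point $q$; this is exactly what dictates the order of operations above. One first straightens $L_0$ and $L_1$ to honest linear subspaces, precisely so that the radial contraction underlying the Poincar\'e lemma is simultaneously compatible with both, and only then invokes the Moser deformation. (Alternatively the same primitive can be obtained from a relative Poincar\'e lemma for the submanifold $\Lambda_0\cup\Lambda_1$, but the radial argument makes the compatibility transparent.)
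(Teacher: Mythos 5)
Your proof is correct. The paper itself gives no argument for this lemma --- it simply cites Po\'zniak's thesis \cite[Proposition 3.4.1]{Po94} --- and your relative Moser argument (straighten both Lagrangians to the linear subspaces $\Lambda_0,\Lambda_1$ first, then produce a primitive of $\omega-\omega_{std}$ vanishing on both via the radial homotopy operator, then flow) is exactly the standard proof of this local normal form; in particular you correctly identify and handle the one delicate point, namely that the primitive must vanish on both Lagrangians simultaneously, which is why the straightening must precede the Moser deformation.
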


\begin{proof}[Proof of Proposition \ref{prop: plumbnbdthm}]
We provide an argument for the case when $k=2$; the general case follows from the same argument. 

Suppose $S_0$ and $S_1$ intersect at $q \in W$ transversely. Applying Lemma \ref{lemmaofPozniak} to the Lagrangians $S_0$ and $S_1$ we have an open neighborhood $U$ of $q$ in $W$ as in the lemma. Shrinking $U$ and scaling $\omega$ if necessary, we may identify $U_0:=U \cap S_0$ with $B^n \times \{0\} \subset \R^n \times \{ 0\}$ and $U_1 := U \cap S_1$ with $\{0\} \times B^n \subset \{0\}\times \R^n$ where $B^n$ is the open ball of radius $1/2$. Accordingly, we have an identification $DT^*U_0 = B^n \times B^n =DT^* U_1 \subset \R^n \times \R^n$, which is exactly the same as what happened in the plumbing region.


We take a metric $g$ on $W$ such that $g = \sum dx_i \otimes dx_i + dy_i \otimes dy_i$ on $U$.   The metric $g$ induces a bundle isomorphism $\psi_j : T^*S_j \to T S_j^{\perp_g}$, $ v^* \mapsto u$ where $u \in T_p S^{\perp_g}$ is chosen so that $\omega(\cdot, u) = v^* \in T_p^* S_j$.
We define a smooth embedding  $\Psi_j : DT^* S_j \to W$ by
$$ \Psi_j(q_j,v^*_j) = \exp_{q_j} ( \psi(v_j^*) ) \text{ for } (q_j,v_j^*) \in DT^*S_j.$$
We may shrink the radius of each of the disk cotangent bundles to ensure that both maps $\Psi_0$ and $\Psi_1$ are embeddings.
 
 Under the identification $DT^*U_0 =B^n \times B^n$, the restriction of $\Psi_0$ to $DT^*U_0 \subset DT^* S_0$ is identified with the inclusion map $B^n \times B^n \hookrightarrow \R^n \times \R^n$ in the neighborhood obtained in Lemma \ref{lemmaofPozniak}. Similarly, the restriction of $\Psi_1$ to $DT^*U_1$ can be identified with the inclusion map $B^n \times B^n \hookrightarrow \R^n \times \R^n$. This shows that $\Psi_0$ and $\Psi_1$ glue together and yield a map $\Psi : DT^*S_0 \# DT^*S_1 \to W$.

Furthermore we have $\Psi^* \omega = \omega_{can}$ on each $S_j \subset DT^* S_j$. Since  $[\Psi^* \omega]  = [\omega_{can}] \in H^2 (DT^* S_0 \# DT^*S_1)$, we can apply the Moser's trick to find a symplectic embedding $\Phi : \nu(\bigcup_{j} S_j) \to W$ of a neighborhood $\nu(S_0 \cup S_1) \subset DT^* S_0 \# DT^* S_1$ of $S_0 \cup S_1$ into $W$. Since $\Psi^* \omega |_{S_j} = \omega_{can}|_{S_j}$ and $\Psi$ sends $S_j \subset DT*S_0 \cup DT*S_1$ to $S_j \subset W$ for each $j=0,1$, so does the map $\Phi$.
\end{proof}

We can now identify the Milnor fiber $V_k$ with the plumbing of $T^*S^n$'s associated to the $A_k$-type Dynkin diagram. Consider the Lefschetz fibration 
$$
\pi: V_k \rightarrow \C, \quad z \mapsto z_0.
$$ 
As in Section \ref{sec: Lefthim}, denote the critical values by $\xi_0 = 1, \xi_1, \dots, \xi_k = e^{\frac{2k\pi i}{k+1}}$; the $(k+1)$-th roots of unity. We define a subset $S_j \subset V_k$ for each $0 \leq j \leq k$ as follows. Consider the line segment in the base $\C$ which connects $\xi_j$ and $\xi_{j+1}$ (here we conventionally put $\xi_{k+1} = \xi_0$). Observe that at each point  on the line segment except for the end points, the fiber is a copy of $T^*S^{n-1}$, and at the end points the fiber is a copy of $T^*S^{n-1}$ with the zero section collapsed to a point. Define $S_j$ to be the union of the zero section on each fibers over the segment. In terms of Picard-Lefschetz theory, $S_j$ is called a \emph{matching cycle} in the total space of the Lefschetz fibration, and it is an exact Lagrangian sphere, see \cite[Section 16]{Se08} 

We, in particular, consider the Lagrangian spheres $S_0, \dots, S_{k-1}$ in $V_k$. Note that they are in the $A_k$ configuration by construction. Think of the $A_k$-type plumbing of cotangent bundles of $S_j$'s. We can embed a neighborhood of the union of $S_j$'s in the plumbing into a neighborhood of the union of $S_j$'s in $V_k$ by Proposition \ref{prop: plumbnbdthm}. Since the completion of the neighborhood of $S_j$'s in $V_k$ contains all the critical points of $\pi$, we conclude that $V_k$ is exact symplectomorphic to the $A_k$-type plumbing $\#^{k}_{j}T^*S_j =\#^kT^*S^n$.

\begin{remark}
The identification of $V_k$ with the $A_k$-type plumbing $\#^kT^*S^n$ depends on the choice of matching cycles of the Lefschetz fibration $\pi: V_k \rightarrow \C$. Our discussion below is with respect to the above choice of matching cycles $S_0, \dots, S_{k-1}$.
\end{remark}

\subsubsection{Cotangent fibers and diagonal Lagrangians}\label{sec: fibersdiagonals} In the Milnor fiber $V_k$, we have defined real Lagrangians, $L_j$ with $0 \leq j \leq k$, which can be seen as Lefschetz thimbles. In this section, we observe the following.

\begin{proposition}\label{prop: ident_diag_fiber}
Under the identification $V_k = \#^k_j T^*S_j = \#^k T^*S^n$, the two real Lagrangians $L_0$ and $L_k$ are cotangent fibers of $T^*S_0$ and $T^*S_{k-1}$, respectively, away from the plumbing regions, and the other real Lagrangians are diagonal Lagrangians in the plumbing regions. 
\end{proposition}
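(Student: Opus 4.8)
The plan is to exploit the compatibility between the Lefschetz fibration $\pi: V_k \to \C$, the matching cycles $S_0,\dots,S_{k-1}$ defining the plumbing, and the Lefschetz thimbles $L_j$ defining the real Lagrangians. All of these are described fiberwise over distinguished paths in the base $\C$: the thimble $L_j$ lives over the half-line $\Gamma_j(t) = \xi_j t$, $t \in [1,\infty)$, while the matching cycle $S_j$ lives over the segment joining $\xi_j$ to $\xi_{j+1}$. The key geometric observation is that the half-line $\Gamma_j$ emanates from the critical value $\xi_j$ \emph{into the unbounded region} of $\C$, and for $0 < j < k$ this half-line is disjoint (except at $\xi_j$) from all the segments $[\xi_i,\xi_{i+1}]$, whereas for $j=0$ the half-line $\Gamma_0 = [1,\infty)$ and for $j=k$ the half-line $\Gamma_k$ point "outward" from the endpoints of the chain of segments $[\xi_0,\xi_1]\cup\cdots\cup[\xi_{k-1},\xi_k]$.

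First I would set up the standard local picture near a critical value $\xi_j$ with $0<j<k$: in a Milnor ball around the corresponding critical point, $\pi$ is modeled by $(w_1,\dots,w_n)\mapsto w_1^2+\cdots+w_n^2$, the two incoming matching cycles $S_{j-1}$ and $S_j$ correspond to the two real directions $\R^n$ and $i\R^n$ of the vanishing sphere over the two segments terminating at $\xi_j$, and the thimble $L_j$ corresponds to the vanishing cycle traced over the ray $\Gamma_j$. Since $\Gamma_j$ bisects (up to the chosen rotation conventions from Section~\ref{sec: Lefthim}) the two segments at $\xi_j$, the set $L_j$ near the critical point is exactly a Lagrangian of the form $\{x_\ell = \pm y_\ell\}$ in the local symplectic coordinates $(x,y)$ of $DT^*S^n$ around the plumbing point — that is, the local model $D$ of a diagonal Lagrangian in \eqref{eq: diaglag}. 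Away from this Milnor ball, $L_j$ is swept out over a ray that avoids all other critical values and all other segments, so it sits inside the cotangent-bundle part $DT^*S_{j-1}$ (or $DT^*S_j$) as a properly embedded exact Lagrangian that, near the plumbing region, matches the diagonal model and elsewhere is forced (being closed-off at infinity and built from a thimble) to be the completion of $D$; hence $L_j$ is a diagonal Lagrangian.

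For the extreme cases $j=0$ and $j=k$ I would argue that $\Gamma_0 = [1,\infty)$ issues from $\xi_0=1$ in the direction \emph{opposite} to the segment $[\xi_0,\xi_1]$, so near the critical point over $\xi_0$ the thimble $L_0$ meets only the single matching cycle $S_0$ (the vanishing sphere over $[\xi_0,\xi_1]$), and in the local $DT^*S^n$ coordinates it is the continuation of a cotangent fiber $\{x=0\}$ rather than a diagonal; since $\Gamma_0$ then escapes to infinity without meeting any other segment, $L_0$ is globally a cotangent fiber of $T^*S_0$ away from the plumbing region, and symmetrically $L_k$ is a cotangent fiber of $T^*S_{k-1}$. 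Throughout, the identification of "the thimble over $\Gamma_j$" with "a fiber or diagonal in the plumbing" uses Proposition~\ref{prop: plumbnbdthm} applied to the configuration $S_0,\dots,S_{k-1}$, together with the explicit symplectomorphisms $\Phi_c: Q_c \to T^*S^{n-1}$ from Section~\ref{sec: Lefthim} to track how the vanishing-cycle coordinates transform as the base point moves along $\Gamma_j$.

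The main obstacle I anticipate is bookkeeping the angle conventions: the map $\Phi_c$ depends on the chosen branch of $\sqrt{c}$, and as the base point runs along $\Gamma_j(t)=\xi_j t$ the quantity $1-\Gamma_j(t)^{k+1}$ traces an arc in $\C$ whose argument varies, so one must verify that the resulting family of identifications $\pi^{-1}(\Gamma_j(t)) \cong T^*S^{n-1}$ glues consistently with the plumbing coordinates and that the "$\pm$" pattern in \eqref{eq: diaglag} comes out correctly (rather than, say, a Lagrangian that is diagonal with respect to a \emph{rotated} frame, which would still be a diagonal Lagrangian but require an extra isotopy argument). Making this precise — essentially a careful real-coordinate computation analogous to the proof of the preceding Proposition identifying $\tilde L_j$ with $L_j$ — is where the real work lies; the symplectic-topology input (Weinstein/Pózniak neighborhoods, Moser) is already packaged in Proposition~\ref{prop: plumbnbdthm}.
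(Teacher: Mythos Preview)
Your overall strategy---reading off the intersection pattern of the thimbles $L_j$ with the matching cycles $S_i$ from the base picture and then translating into the plumbing model---is the same starting point as the paper's, and your treatment of the endpoint cases $j=0,k$ (one transverse intersection with a single $S_i$, hence a cotangent fiber after adjusting the embedding of Proposition~\ref{prop: plumbnbdthm}) matches the paper's argument.

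Where you diverge is in the diagonal case $0<j<k$. You propose to verify directly, in the local Lefschetz model $\sum w_i^2$, that the bisecting ray $\Gamma_j$ produces the Lagrangian $\{x_\ell=\pm y_\ell\}$ in the plumbing coordinates, and you correctly flag the branch/angle bookkeeping as the main obstacle. The paper sidesteps this computation entirely: once one knows that $L_j$ meets $S_{j-1}$ and $S_j$ transversely at their common point, one works in the P\'ozniak chart where $S_{j-1}=\{y=0\}$ and $S_j=\{x=0\}$, writes $L_j=\mathrm{graph}(df)$ (transversality to $S_j$), observes that the origin is a nondegenerate critical point of $f$ (transversality to $S_{j-1}$), and then invokes the Morse lemma to choose $x$-coordinates in which $f$ is quadratic with $\pm 1$ eigenvalues. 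This immediately gives $df=\sum \pm x_i\,dx_i$, i.e.\ the diagonal model~\eqref{eq: diaglag}.

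So your approach is not wrong, but the explicit computation you anticipate is unnecessary: the bisection heuristic does not by itself put $L_j$ in diagonal form (the linear symplectomorphism taking the two vanishing directions to $\{y=0\}$ and $\{x=0\}$ need not send the bisector to the diagonal), and making it precise would amount to reproving the Morse lemma by hand. The paper's soft argument uses only the transversality data and the freedom in choosing the plumbing identification, which is both shorter and more robust.
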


\begin{proof}
This follows from the intersection configurations between $L_j$'s and $S_j$'s. For a simper presentation, let us assume that $k=2$ (the other cases are then straightforward). We have two Lagrangian spheres $S_0$ and $S_1$ in the $A_2$-configuration and we have three Lagrangians $L_0$, $L_1$, and $L_2$ located as in Figure \ref{real_plumb}. 

\begin{figure}[htb]
	\centering
	\begin{overpic}[width=200pt,clip]{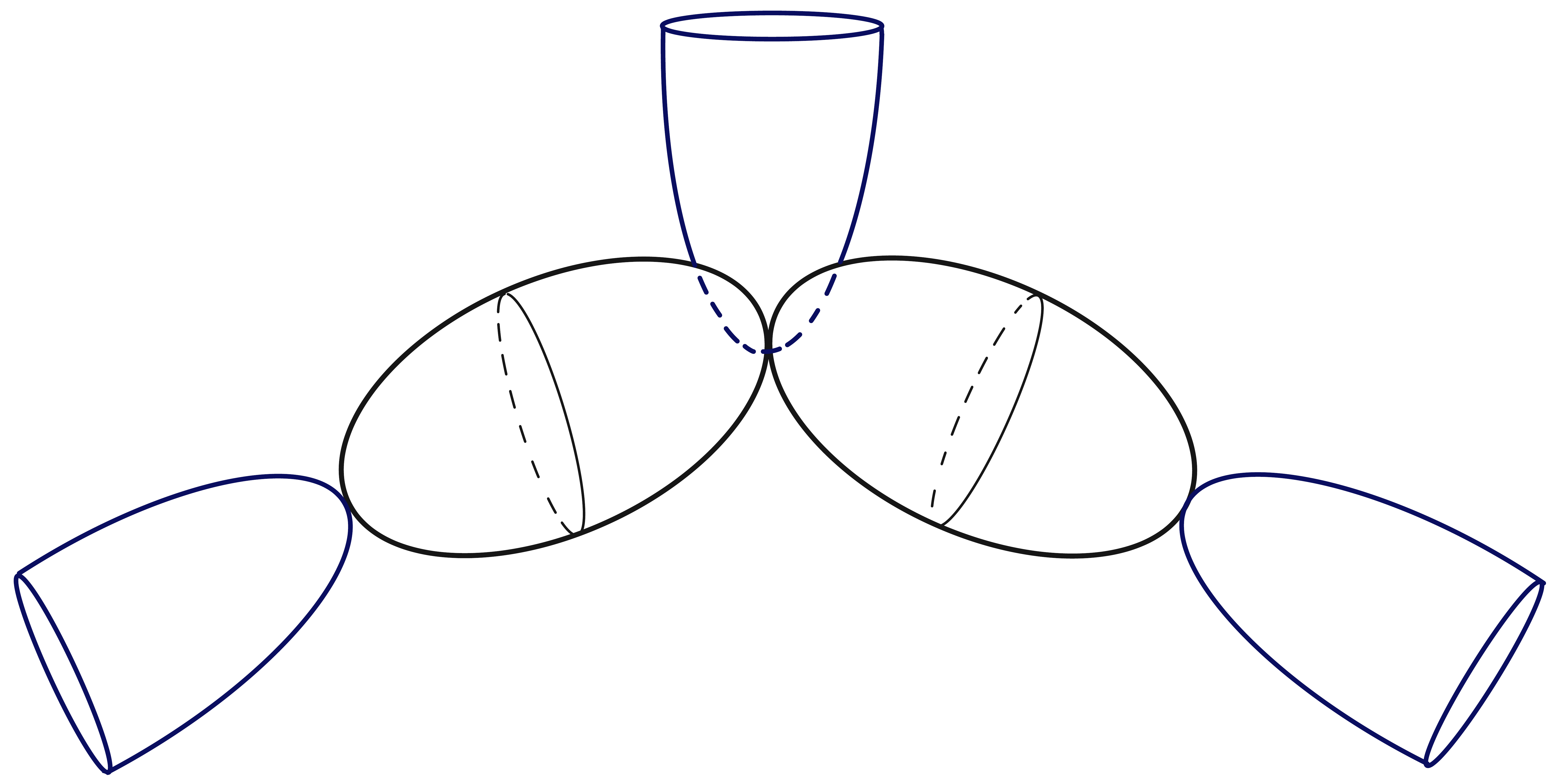}
	\put(170,20){$L_0$}
	\put(96,77){$L_1$}
	\put(20,20){$L_2$}
	\put(134,45){$S_0$}
	\put(54,45){$S_1$}
	\put(145,23){$\xi_0$}
	\put(45,23){$\xi_2$}
	\put(96,40){$\xi_1$}
	
	\end{overpic}
	\caption{Real Lagrangians with matching cycles}
	\label{real_plumb}
\end{figure}

In particular $L_0$ intersects $S_0$ transversely at the critical point $\xi_0 = 1$. In view of the proof of Proposition \ref{prop: plumbnbdthm}, we can readily modify the embedding $\Phi$ in the statement of Proposition \ref{prop: plumbnbdthm} to have the property that $\Phi$ sends a local neighborhood of $\xi_0$ in $L_0$ to a fiber in the plumbing $T^*S_0 \# T^*S_1$ which comes from a fiber of the $T^*S_0$-factor. The same argument applies to $L_2$ since $L_2$ intersects $S_1$ transversely at one point.

Now let us consider the other Lagrangian $L_1$. This Lagrangian intersects $S_0$ and $S_1$ transversely at a common point, namely at $\xi_1$. In view of Proposition \ref{prop: plumbnbdthm}, we can take a local chart $U$ of $\xi_1$ in $V_2$, identified symplectically with (a neighborhood of) the model plumbing $R$ defined in \eqref{eq: modelplumb} such that $S_0$ corresponds to the part $\{y = 0\}$ (the zero section) and $S_1$ corresponds to the part $\{x = 0\}$ (a cotangent fiber).

In this chart, since $L_1$ is transverse to $S_1$, it can be written as the graph of an exact 1-form, say $df$, where $f$ is a smooth function $f: \R^n \rightarrow \R$ of the $x$-coordinates. Since $L_1$ intersects $S_0$ at the origin $(x,y)=(0, 0)$ (in the chart), it follows that $df=0$ at the origin. In other words, the origin in $R$ is the unique critical point of $f$. Furthermore, since the intersection is transverse, it follows that the origin is a non-degenerate critical point of $f$.

Note, from the equation \eqref{eq: diaglag}, that the Lagrangian $L_1$ is diagonal in $R$ if and only if the 1-form $df$ is of the following form
$$
df = \sum_{i = 1}^n \lda_ix_i dx_i
$$
where $\lda_i = \pm 1$. That is, the coefficient functions of $df$ are all linear with slope $1$ or $-1$. This is precisely the case when we take the $x$-coordinates for $S_0$ so that $f$ is written near the origin as a complex quadratic polynomial. This is always possible thanks to the Morse lemma. It follows that $L_1$ can be written as a diagonal Lagrangian in a plumbing region. This completes the proof.
\end{proof}

\subsection{Computing the group structure of (v-shaped) wrapped Floer homology}\label{sec: computegroups} An advantage of regarding the Lefschetz thimbles as real Lagrangians is that this enables us to compute wrapped Floer homology explicitly. In this section, we outline the computation of the group structure of (v-shaped) wrapped Floer homology of $L_j$'s.

\subsubsection{A Morse--Bott spectral sequence in v-shaped wrapped Floer homology} The group structure of the wrapped Floer homology of $L_0$ is computed in \cite[Section 7.4.2]{KiKwLe18} using a Morse--Bott spectral sequence. Actually, we can compute $\HW_*(L_j)$ for each $j$ using the same computational technique. One then finds that the group $\HW_*(L_j)$ are isomorphic to each other for all $0\leq j\leq k$. This is because their Morse--Bott setups (that is, Morse--Bott submanifolds and their Maslov indices) are the same, so they have the same $E^1$-page of the spectral sequence in \cite[Theorem 2.1]{KiKwLe18}. We consequently have the following computation.

\begin{proposition}\label{prop: groupcompu}
Let $n \geq 3$. For every $0 \leq j \leq k$, the wrapped Floer homology group of $L_j$ is given by
$$
\HW_*(L_j) = \begin{cases} \Z_2 & *=0, \{(n-2)(k+1) +2\}N -n +1, \{(n-2)(k+1) +2\}N \text{ with } N\in \N; \\ 0 & \text{otherwise}.  \end{cases}
$$
\end{proposition}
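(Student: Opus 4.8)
The plan is to compute $\HW_*(L_j)$ via the Morse--Bott spectral sequence of \cite[Theorem 2.1]{KiKwLe18}, following verbatim the computation carried out for $L_0$ in \cite[Section 7.4.2]{KiKwLe18}, and then observe that the input data is independent of $j$. Concretely, the strategy proceeds in the following steps.

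\textbf{Step 1: Set up the Morse--Bott data.} First I would choose a contact form $\alpha$ on the boundary $\Sigma = \partial W$ (here $W$ a Liouville domain whose completion is $V_k$, or rather a suitable subdomain) for which the Reeb flow is of Morse--Bott type and the chord spaces relative to the Legendrian $\mathcal{L}_j = \partial L_j$ are clean. In the $A_k$-setting this comes from the Boothby--Wang / prequantization-type description of the link of the singularity $z_0^{k+1}+z_1^2+\cdots+z_n^2$; the Reeb orbits and Reeb chords organize into submanifolds whose topology is understood (products of spheres / lens-type spaces and, for the chords, the relevant symmetric spaces cut out by the real involution $\rho_j$). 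The key point is that the Legendrian $\mathcal{L}_j$ is the real locus of $\rho_j$ restricted to $\Sigma$, and all of these boundary data — the Morse--Bott submanifolds $S_c$ of Reeb chords and their Maslov indices $\mu(c)$ — depend only on the ambient $A_k$-link and on the fact that $\mathcal{L}_j$ is a real Legendrian of this type, \emph{not} on which index $j$ we took. So I would verify this $j$-independence explicitly by comparing the chord spaces for different $j$ (they are identified by an ambient symplectomorphism, or simply by inspection of the quadric model).

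\textbf{Step 2: Compute the $E^1$-page.} With the Morse--Bott data fixed, the $E^1$-page of the spectral sequence is $\bigoplus_c \Ho_{*-\mu(c)-\ldots}(S_c;\Z_2)$ summed over the chord spaces $S_c$ (with the appropriate index shift from \cite{KiKwLe18}, plus the contribution of the constant chords, i.e. the Morse homology of $L_j$ itself, giving the $*=0$ summand $\Z_2$). Here I would just quote the computation of \cite[Section 7.4.2]{KiKwLe18}: the chords come in families indexed by $N\in\N$ (the $N$-th iterate), each family contributes cohomology of a sphere bundle, and after the index bookkeeping — the shift by $(n-2)(k+1)+2$ between consecutive $N$ reflects the $\mathcal{L}$-period of the Reeb flow and the $(k+1)$-fold symmetry — one gets generators in degrees $\{(n-2)(k+1)+2\}N - n + 1$ and $\{(n-2)(k+1)+2\}N$. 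Since the $S_c$ and $\mu(c)$ agree with those for $L_0$, the $E^1$-page is literally the same.

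\textbf{Step 3: Collapse.} Finally I would argue that the spectral sequence degenerates at $E^1$ for degree reasons: the nonzero $E^1$-entries sit in degrees $0$ and $\{(n-2)(k+1)+2\}N-n+1, \{(n-2)(k+1)+2\}N$, and since $n\ge 3$ these are spread out enough (consecutive entries within a block differ by $n-1\ge 2$, and blocks are separated by $(n-2)(k+1)+2 \ge$ a large gap) that no differential $d^r$ of any page can be nonzero — source and target never land in compatible adjacent degrees. Hence $\HW_*(L_j) = E^1_{*}$, which is the claimed answer, and it is the same for every $j$. (This is precisely the reasoning of \cite[Section 7.4.2]{KiKwLe18} for $L_0$, reused verbatim.)

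\textbf{Main obstacle.} The genuinely new content compared to \cite{KiKwLe18} is only the claim that the Morse--Bott setup — chord spaces and their Maslov indices — is $j$-independent. The hard part will therefore be to make that comparison airtight: one must check that the clean manifolds of Reeb chords of $\mathcal{L}_j$, together with their Robbin--Salamon/Maslov indices, are genuinely unchanged under $j\mapsto j'$. This is plausible because the real involutions $\rho_j$ are all conjugate by a rotation in the $z_0$-plane that permutes the roots of unity, so there is an exact symplectomorphism of $V_k$ carrying $\mathcal{L}_j$ to $\mathcal{L}_{j'}$ (at least after isotopy near the boundary); but since this symplectomorphism need not preserve the chosen contact form, one should instead verify index equality directly from the linearized Reeb flow along the chords in the explicit quadric model. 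Everything downstream — the $E^1$-page and the collapse — is then a mechanical transcription of the $L_0$ computation in \cite{KiKwLe18}, using $n\ge 3$ to rule out differentials.
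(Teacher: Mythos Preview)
Your proposal is correct and matches the paper's approach essentially verbatim: the paper likewise cites the $L_0$ computation from \cite[Section 7.4.2]{KiKwLe18}, observes that the Morse--Bott setups (chord submanifolds and their Maslov indices) are identical for every $j$ so that the $E^1$-pages coincide, and concludes the result (with degeneration at $E^1$ for degree reasons when $n\ge 3$). Your identification of the only genuinely new step --- verifying $j$-independence of the Morse--Bott data --- is exactly what the paper flags as well, though the paper simply asserts it rather than spelling out the conjugation-by-rotation argument you sketch.
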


\begin{remark}
For $n \geq 3$, the topological conditions \eqref{eq: topconditions} are all satisfied with $(V_k, L_j)$.
\end{remark}

\subsubsection{V-shaped wrapped Floer homology of $L_j$} \label{sec: compgroupvshaped} As the ordinary case, we can use a Morse--Bott spectral sequence to compute the group structure of the v-shaped wrapped Floer homology $\widecheck{\HW}_*(L_j)$.

To state a spectral sequence we fix notations as follows. Let $(W, d\lda)$ be a Liouville domain with an admissible Lagrangian $L$. Denote its Legendrian boundary in $\Sigma: = \p W$ by $\mathcal{L}$. For technical simplicity, we assume in addition to the condition \eqref{eq: topconditions} that $\pi_1(\mathcal{L})=0$. 

Assume that every Reeb chord in $(\Sigma, \alpha, \mathcal{L})$ is of Morse--Bott type in the sense of \cite[Definition 2.6]{KiKwLe18}. Define the \emph{v-spectrum} of $(\Sigma, \alpha, \mathcal{L})$ by
$$
\widecheck{\Spec}(\Sigma, \alpha, \mathcal{L}) : = \{T \in \R \;|\; \phi_R^T(x) \in \mathcal{L} \text{ for some $x \in \mathcal{L}$}\},
$$
where $\phi_R^t(x)$ denotes the time-$t$ Reeb flow of the contact form $\alpha$. For each $T \in \widecheck{\Spec}(\Sigma, \alpha,\mathcal{L})$, we define the corresponding Morse--Bott submanifold by
$$
\mathcal{L}_T : =\{x \in \mathcal{L} \;|\; \phi_R^T(x) \in \mathcal{L}\}. 
$$ 
We arrange the v-spectrum as 
$$
\widecheck{\Spec}(\Sigma, \alpha,\mathcal{L}) = \{\dots, T_{-2}< T_{-1}< T_0 = 0 < T_1< T_2, \dots \}.
$$
\begin{remark}
If one has arranged the (non v-shaped) spectrum as
$$
\Spec(\Sigma, \alpha, \mathcal{L}) = \{T_0 = 0 < T_1< T_2 < \cdots\},
$$
then the v-shaped spectrum is arranged just by putting $T_{-p} = - T_p$ for $p \in \N$.
\end{remark}
\begin{theorem} \label{vMBss}
There exists a spectral sequence $\{(E^r, d^r)\}_{r \in \Z_{\geq 0}}$ converging to the v-shaped wrapped Floer homology $\widecheck{\HW}_*(L;W)$ such that  
$$
E^1_{pq} =  \begin{cases} \displaystyle  H_{p+q - \sh(\mathcal{L}_{T_p})+\frac{1}{2}\dim L}(\mathcal{L}_{T_p} ; \Z_2) & p\neq 0, \\ \displaystyle H_{q + \dim \mathcal{L}} (\mathcal{L}; \Z_2) & p = 0.  \end{cases}
$$
where $\sh(\mathcal{L}_{T_p}) = \begin{cases} \mu(\mathcal{L}_{T_p}) - \frac{1}{2} (\dim \mathcal{L}_{T_p} -1) & p >0 \\ \mu(\mathcal{L}_{T_p}) - \frac{1}{2} (\dim \mathcal{L}_{T_p} +1) & p < 0 \end{cases}$.
\end{theorem}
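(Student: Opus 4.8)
The plan is to construct the spectral sequence from an action filtration on a carefully chosen cofinal family of v-shaped admissible Hamiltonians, and to identify the $E^1$-page via a Morse--Bott analysis of the local Floer homology of each Morse--Bott submanifold $\mathcal{L}_{T_p}$. First I would fix an action window $(a,b)$ and, following the cofinal family in Figure \ref{fig: vshapedHam}, choose a Hamiltonian $H = h(r)$ whose slope $\mathfrak{a}$ at infinity is large enough that every element of $\widecheck{\Spec}(\Sigma,\alpha,\mathcal{L})$ in the window $(-\mathfrak{a}, \mathfrak{a})$ is detected; the 1-chords of such $H$ relative to $\widehat L$ organize into clusters, one for each $T_p$ with $|T_p| < \mathfrak{a}$, lying near $r \approx 1$ (from the negative part of the v-shape, $p < 0$) or near $r$ slightly bigger than $1$ (from the positive part, $p > 0$), together with the constants near $r=1$ contributing the $p=0$ term. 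A $C^2$-small Morse perturbation on each $\mathcal{L}_{T_p}$ breaks each cluster into finitely many nondegenerate chords, and the key point is that the Hamiltonian action is, up to the small perturbation, constant on each cluster and strictly increasing in $|T_p|$, so the action value gives a filtration of the Floer chain complex indexed by $p$.

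Next I would take the spectral sequence associated to this action filtration: the $E^0$-page in filtration degree $p$ is the subquotient complex generated by the chords coming from $\mathcal{L}_{T_p}$, and its homology $E^1_{p,*}$ is the \emph{local} (Morse--Bott) Floer homology of that submanifold. Here I would invoke the standard Morse--Bott machinery — exactly as in \cite[Theorem 2.1]{KiKwLe18} in the non-v-shaped case — to identify this local homology with the singular homology of $\mathcal{L}_{T_p}$, shifted by the appropriate Robbin--Salamon/Maslov correction. The grading bookkeeping is the only delicate part: one must track the degree $|x| = \mu(x) - n/2$ of a perturbed chord arising from a critical point of the Morse function on $\mathcal{L}_{T_p}$, and show it equals (Morse index) $+ (p+q)$-type contribution $- \sh(\mathcal{L}_{T_p}) + \tfrac12 \dim L$, where the shift $\sh$ packages the Maslov index $\mu(\mathcal{L}_{T_p})$ together with a $\pm\tfrac12(\dim\mathcal{L}_{T_p}\mp 1)$ term whose sign depends on whether $p>0$ or $p<0$. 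The asymmetry in the two cases of $\sh$ reflects that for $p>0$ the chord sits on the increasing (convex) branch of $h$ while for $p<0$ it sits on the decreasing branch near the boundary, which flips the sign of the relevant spectral contribution to the Robbin--Salamon index; for $p=0$ the chords are genuine constants and one gets the unshifted $H_{q+\dim\mathcal{L}}(\mathcal{L};\Z_2)$. I would derive these index formulas from the linearized Reeb/Liouville flow along $\{r=1\}\times\Sigma$ exactly as in \cite[Sections 2.1--2.2]{KiKwLe18}, using $\pi_1(\mathcal{L})=0$ and the topological assumptions \eqref{eq: topconditions} to ensure all indices are well-defined and capping-independent.

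Finally I would pass to the limit. For fixed $b$ the family is cofinal as $\mathfrak{a}\to\infty$, so the direct limit of the filtered complexes computes $\widecheck{\HW}^{(a,b)}_*(L)$; taking $\varprojlim_a$ and then $\varinjlim_b$ as in the definition of $\widecheck{\HW}_*(L)$, and checking that the filtration and the connecting differentials are compatible with the continuation maps (which are filtration-preserving since they are built from non-increasing homotopies, hence do not raise action), yields a spectral sequence converging to $\widecheck{\HW}_*(L;W)$ with the stated $E^1$-page. Convergence is not automatic because the filtration is unbounded in both directions once we take all of $\widecheck{\Spec}$; I would handle this by working at finite window $(a,b)$ first — where the filtration has finitely many nonzero levels, so the spectral sequence converges trivially — and then arguing that the inverse/direct limits are well-behaved because in each fixed total degree only finitely many $E^1_{p,q}$ are nonzero (a consequence of $\mathcal{L}$ being compact and the index growing linearly in $p$, by the index formula). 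The main obstacle I expect is precisely this grading/sign computation for the shift $\sh(\mathcal{L}_{T_p})$ in the two cases $p>0$ and $p<0$: getting the Robbin--Salamon index contributions of the degenerate directions right, with the correct $\mp\tfrac12$, requires a careful local model of the Floer data near $\{r=1\}\times\Sigma$ and is where an off-by-$\tfrac12$ error would most easily creep in.
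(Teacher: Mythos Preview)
Your proposal is correct and follows essentially the same approach as the paper: the paper does not give a detailed proof but simply remarks (Remark \ref{rem: negativeCZind}(3)) that the construction is the standard Morse--Bott spectral sequence from the action filtration, exactly as in the ordinary wrapped Floer homology case \cite[Theorem 2.1]{KiKwLe18}. Your outline supplies precisely the details the paper omits, including the careful handling of the sign asymmetry in $\sh(\mathcal{L}_{T_p})$ for $p>0$ versus $p<0$ and the convergence argument via finite action windows.
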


\begin{remark}\label{rem: negativeCZind} \
\begin{enumerate}
\item Our grading convention is such that $\widecheck{\HW}^{(-\epsilon, \epsilon)}_*(L) \cong H_{* + n-1}(\mathcal{L})$ for sufficiently small $\epsilon >0$.
\item Here $\mu(\mathcal{L}_{T_p})$ denotes the Maslov index (also referred as Robbin--Salamon index) of a Morse--Bott component $\mathcal{L}_{T_p}$, see \cite[Section 2.2.2]{KiKwLe18} for a definition. By the inverse property of the index, we have $\mu(\mathcal{L}_{T_{-p}}) = - \mu(\mathcal{L}_{T_p})$ for each non-zero $p \in \Z$.
\item The construction of the Morse--Bott spectral sequence is fairly standard and is essentially the same as the ordinary wrapped Floer homology case. We use an action filtration with respect to the action functional \eqref{eq: Hamactfunl} which is adapted to the Morse--Bott setup.
\end{enumerate}
\end{remark}

\subsubsection{Computing the group $\widecheck{\HW}_*(L_j)$} \label{ex: groupcomputation}
Applying the spectral sequence to $L_j$ in $V_k$, we can compute the group structure of its v-shaped wrapped Floer homology. First of all, the computation of the $E^1$-page can be done exactly as in wrapped Floer homology case \cite[Section 7.4.2]{KiKwLe18}. The only computational differences are that we have the homology of the Legendrian $\mathcal{L}_j$ in the column $p=0$ instead of the relative homology of the pair $(L_j, \mathcal{L}_j)$, and we have the homology of the Morse--Bott submanifold $\mathcal{L}_{T_{-p}}$ in the corresponding column for each $p \in \N$. In particular the latter one is the same as the homology of $\mathcal{L}_{T_p}$ but with a different degree shift. As noted in Remark \ref{rem: negativeCZind}, the degree shift for $\mathcal{L}_{T_{-p}}$ differs only by sign from that of $\mathcal{L}_{T_{p}}$.

We present the computation of $\widecheck{\HW}_*(L_0)$ in more detail. Firstly, to fit our situation into a Morse--Bott setup, we deform the (non-completed) Milnor fiber $V_k \cap B^{2n+2}$ into the following Liouville domain
$$
W = W_k := \{z \in \C^{n+1}\;|\; z_0^{k+1} + z_1^2 + \cdots + z_n^2 = \zeta(|z|)\} \cap B^{2n+2}
$$   
where $B^{2n+2} \subset \C^{n+1}$ is the unit ball and $\zeta : \R \rightarrow \R$ is a monotone smooth function such that $\zeta(r)=1$ for $0 \leq r \leq 1/4$ and $\zeta(r) = 0$ for $3/4 \leq r \leq 1$. The boundary $\p W =: \Sigma$ is then an $A_k$-type Brieskorn manifold, and the Legendrian $\mathcal{L} = \mathcal{L}_0 \subset \Sigma$ in the boundary is now written as 
$$
\mathcal{L} = \{z \in \C^{n+1} \;|\; z_0^{k+1} + z_1^2 + \cdots + z_n^2 = 0, \; |z|^2 =1, \; y_0=x_1=\cdots =x_n=0\},
$$
where $z_j = x_j + i y_j$. 

The Brieskorn manifold $\Sigma$ carries a Morse--Bott type contact form $\alpha$ given by
\begin{equation}\label{eq: MBcontactform}
\alpha : = \frac{i}{2}\left\{(k+1)(z_0d\overline z_0 - \overline z_0 dz_0) + 2 \sum_{j=1}^n (z_jd\overline z_j - \overline z_j dz_j) \right\}
\end{equation}
restricted to $\Sigma$. The corresponding Reeb flow is explicitly given as
\begin{equation}\label{eq: periodicReebflow}
\phi_{R}^t(z) = (e^{it/{k+1}}z_0, e^{it/2}z_1, \dots, e^{it/2}z_{n}),
\end{equation}
and the corresponding v-spectrum is given by
\begin{equation}\label{eq: vspectrum}
\widecheck{\Spec}(\Sigma, \alpha, \mathcal{L}) = \{N \cdot 2(k+1)\pi \;|\; N \in \Z \}.
\end{equation}

It is straightforward to see that, for each $T \in \widecheck{\Spec}(\Sigma, \alpha, \mathcal{L})$, the Morse--Bott component $\mathcal{L}_T$ is identical to $\mathcal{L}$, which is topologically equivalent to $S^{n-1}$. Furthermore, by \cite[Lemma 7.2]{KiKwLe18}, we obtain the indices $\mu(\mathcal{L}_T)$ as follows.
$$
\mu(\mathcal{L}_{N \cdot 2(k+1)\pi}) = - \mu({\mathcal{L}_{-N \cdot 2(k+1)\pi}}) = \{2+(n-2)(k+1)\} N. 
$$

Using these data, we determine the $E^1$-page of the spectral sequence. It turns out that if $n \geq 3$, the spectral sequence terminates at the $E^1$-page for degree reasons. For example if $k=2$ or $3$ and $n=3$ the $E^1$-page is given as Figure \ref{fig: E1example}. Furthermore, for a fixed $k$ and $n$, the $E^1$-page for $\widecheck{\HW}_*(L_j)$ does not depend on $0\leq j \leq k$. We therefore obtain the following computation.

\begin{proposition}\label{prop: groupvshapedcompu}
For $n \geq 3$ and $0 \leq j \leq k$,
$$
\widecheck{\HW}_*(L_j) = \begin{cases} \Z_2 & * = \{(n-2)(k+1) +2\}N -n +1, \{(n-2)(k+1) +2\}N \text{ with $N \in \Z$}; \\ 0 & \text{otherwise}.  \end{cases}
$$
\end{proposition}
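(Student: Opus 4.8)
The plan is to run the Morse--Bott spectral sequence of Theorem~\ref{vMBss} for the Legendrian $\mathcal{L}_j \subset \Sigma = \partial W_k$ and verify that it degenerates at the $E^1$-page, so that the answer can simply be read off column by column. First I would record the Morse--Bott input computed just above: the v-spectrum is $\widecheck{\Spec}(\Sigma,\alpha,\mathcal{L}) = \{N \cdot 2(k+1)\pi : N \in \Z\}$, every Morse--Bott component $\mathcal{L}_{T_N}$ with $T_N = N\cdot 2(k+1)\pi$ is diffeomorphic to $S^{n-1}$, and the Maslov indices are $\mu(\mathcal{L}_{T_N}) = I\cdot N$ with $I := (n-2)(k+1)+2$. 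Plugging $\dim \mathcal{L}_{T_N} = n-1$ into the shift formula $\sh(\mathcal{L}_{T_p}) = \mu(\mathcal{L}_{T_p}) - \tfrac12(\dim\mathcal{L}_{T_p} \mp 1)$ (sign according to $p > 0$ or $p < 0$), one gets $\sh(\mathcal{L}_{T_N}) = I N - \tfrac{n}{2} + 1$ for $N>0$ and $I N - \tfrac n2$ for $N<0$; combined with $\tfrac12 \dim L = \tfrac n2$ the $E^1$-entry in column $p=N\neq 0$ becomes $H_{p+q - IN + n - 1}(S^{n-1};\Z_2)$ for $N>0$ and $H_{p+q-IN+n}(S^{n-1};\Z_2)$ for $N<0$, while the $p=0$ column is $H_{q+n-1}(S^{n-1};\Z_2)$.

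Next I would translate these entries into total-degree $\ast = p+q$ contributions. Using $H_\ast(S^{n-1};\Z_2) = \Z_2$ for $\ast = 0, n-1$ and $0$ otherwise, the column $p = N > 0$ contributes a $\Z_2$ exactly in total degrees $\ast = IN - n + 1$ (from $H_0$) and $\ast = IN$ (from $H_{n-1}$); the column $p=N<0$ contributes a $\Z_2$ in total degrees $\ast = IN - n$ and $\ast = IN - 1$, which after reindexing $N \mapsto -N$ is again $\ast = IN$ and $\ast = IN+n-1$ running over negative $N$ — in other words the union over all $N\in\Z$ of the two arithmetic progressions $\{IN - n+1\}$ and $\{IN\}$, matching the claimed answer including $\ast = 0$ from the $p=0$ column. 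I would then check there is no overlap: since $n \geq 3$ and $I = (n-2)(k+1)+2 \geq n$, the two residues $0$ and $1-n \pmod I$ are distinct and each total degree $\ast$ is hit by at most one column, so $E^1$ has rank at most one in each total degree.

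The main point — and the only real obstacle — is the degeneration claim: one must show all higher differentials $d^r$, $r \geq 1$, vanish. The differential $d^r$ has bidegree $(-r, r-1)$, hence changes total degree by $-1$; so a nonzero $d^r$ would require two nonzero $E^1$-entries in total degrees $\ast$ and $\ast - 1$ lying in columns whose indices differ by $r \geq 1$. The gaps between consecutive nonzero total degrees in our list are the pattern $\dots, n-1, (I-n+1), n-1, (I-n+1), \dots$ (the ``$n-1$'' within a period $\{IN-n+1, IN\}$, then the ``$I-(n-1) = I-n+1$'' jump to the next period); a degree-$(-1)$ differential between adjacent nonzero degrees would need $n-1 = 1$, i.e. $n=2$, which is excluded, or would have to connect degrees within the same column ($p$ fixed), which is impossible since $d^r$ moves $p$ by $-r\neq 0$. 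Thus for $n\geq 3$ every $d^r$ must vanish, $E^1 = E^\infty$, and the associated graded — which has rank $\leq 1$ in each degree — gives the stated group. (One should also note the spectral sequence is one of $\Z_2$-vector spaces so there are no extension problems.) I would remark that this is the verbatim analogue of the degeneration argument for $\HW_*(L_j)$ carried out in \cite[Section 7.4.2]{KiKwLe18}, the only change being the two-sided range of $N$ and the altered shift in the negative columns, neither of which affects the degeneration; an explicit picture of the $E^1$-page for small $(n,k)$ as in Figure~\ref{fig: E1example} makes the vanishing of differentials visually transparent.
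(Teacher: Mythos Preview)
Your overall approach---running the Morse--Bott spectral sequence of Theorem~\ref{vMBss} and checking degeneration at $E^1$ for degree reasons---is exactly the paper's, and your degeneration argument (no two nonzero total degrees are adjacent when $n\geq 3$) is correct. However, there is a genuine slip in the negative-$p$ computation. With the shift formula as you (and Theorem~\ref{vMBss}) state it, the column $p=N<0$ indeed contributes in total degrees $IN-n$ and $IN-1$, but your ``reindexing $N\mapsto -N$'' does \emph{not} turn this set into $\{IN-n+1\}\cup\{IN\}$: concretely, for $n=3$, $k=2$, $I=5$ you obtain $\{-8,-6\}$ from the $p=-1$ column, whereas the proposition (and the paper's own Figure~\ref{fig: E1example}) requires $\{-7,-5\}$. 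The sets $\{IN-n,\,IN-1:N<0\}$ and $\{IN-n+1,\,IN:N<0\}$ are simply different, so the sentence asserting their coincidence is false, and if one follows your computation literally one gets the wrong group in every negative degree.

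The resolution is that the shift formula for $p<0$ as printed is off by one (note that the paper's remark that the degree shift ``differs only by sign'' is itself incompatible with the stated $\sh$); the figure and the final answer are consistent with using the \emph{same} shift $\sh(\mathcal{L}_{T_p})=\mu(\mathcal{L}_{T_p})-\tfrac12(\dim\mathcal{L}_{T_p}-1)$ for all $p\neq 0$, which yields contributions in total degrees $IN-n+1$ and $IN$ in every column and hence the proposition as stated. You should either flag this discrepancy and use the shift consistent with Figure~\ref{fig: E1example}, or derive the correct $p<0$ shift independently (e.g.\ by tracing how Morse--Bott chords of negative period perturb), rather than covering the mismatch with an incorrect reindexing.
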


\begin{figure}[htp]
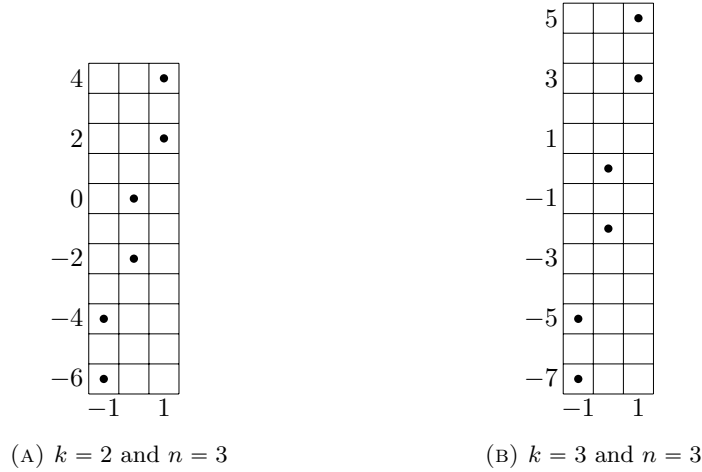

\begin{subfigure}[b]{0.25\textwidth}
\centering
\begin{sseq}{{-1}...1}
{{-6}...4}
\ssmoveto 0 {-2}
\ssdropbull
\ssmove 0 2
\ssdropbull

\ssmoveto 1 2
\ssdropbull
\ssmove 0 2
\ssdropbull


\ssmoveto {-1} {-6}
\ssdropbull
\ssmove 0 2
\ssdropbull


\end{sseq}
\caption{$k=2$ and $n=3$}
\end{subfigure}\hspace{7.0em}
\begin{subfigure}[b]{0.25\textwidth}
\centering
\begin{sseq}{{-1}...1}
{{-7}...5}
\ssmoveto 0 {-2}
\ssdropbull
\ssmove 0 2
\ssdropbull

\ssmoveto 1 3
\ssdropbull
\ssmove 0 2
\ssdropbull


\ssmoveto {-1} {-7}
\ssdropbull
\ssmove 0 2
\ssdropbull


\end{sseq}

\caption{$k=3$ and $n=3$}
\end{subfigure}
\caption{The $E^1$-page in examples}
\label{fig: E1example}
\end{figure}

\subsection{Index-positivity of the real Legendrians in $A_k$-type Brieskorn manifolds} \label{sec: indposak} In this section we show that $A_k$-type Brieskorn manifolds with the real Legendrians are product-index-positive for $n \geq 3$.
For the sake of simplicity, we only deal with the case of the first real Legendrian $\mathcal{L} := \mathcal{L}_0 = \p L_0$ assuming $k$ is even; the assertion for the other cases follows from the corresponding computation with minor numerical changes.

Recall that the $A_k$-type Brieskorn manifold is given by
$$
\Sigma =\{z \in \C^{n+1} \;|\; z_0^{k+1}+z_1^2  + \cdots + z_n^{2} = 0,\; |z| = 1\}.
$$
The real Legendrian $\mathcal{L}$ is given by
$$
\mathcal{L} = \{z \in \C^{n+1} \;|\; z \in \Sigma, \; y_0 = x_1 = \cdots = x_n = 0\}
$$
where $z = x + iy$. We introduced in Section \ref{ex: groupcomputation} a Morse--Bott contact form $\alpha$ \eqref{eq: MBcontactform} on $\Sigma$ whose Reeb flow \eqref{eq: periodicReebflow} is periodic. Index-positivity for those periodic Reeb orbits in $A_k$-type Brieskorn manifolds is already discussed in several precedent works, for example, see \cite[Section 5.4]{KwvK16} and \cite[Lemma 1]{Ue16}. Here we check the Reeb chords case with Remark \ref{rem: indposMB}.


Recall that the periods of the Reeb chords in $(\Sigma, \alpha, \mathcal{L})$ are given by $2(k+1)\pi \cdot N$ for $N \in \N$ and the corresponding Morse--Bott submanifold of Reeb chords is given identically by $\mathcal{L}_{2(k+1)\pi \cdot N} = \mathcal{L}$. 
By \cite[Lemma 7.2]{KiKwLe18}, we obtain the index as follows.
$$
\mu(\mathcal{L}_{N \cdot 2(k+1)\pi}) = \{2+(n-2)(k+1)\} N. 
$$
Let $c$ be a Reeb chord in a Morse--Bott submanifold $\mathcal{L}_{N \cdot 2(k+1)\pi}$. Since $k\geq 1$ by the definition, we find that
$$
\mu(\mathcal{L}_{N \cdot 2(k+1)\pi}) =  \{2+(n-2)(k+1)\} N \geq 2n -2. 
$$
In view of Remark \ref{rem: indposMB}, since $\dim \mathcal{L}_{N \cdot 2(k+1)\pi} = \dim \mathcal{L} = n-1$, it now suffices to have that
$$
2n -2> \frac{3}{2} + \frac{1}{2}\dim \mathcal{L}_{N \cdot 2(k+1)\pi} = \frac{n+2}{2}.
$$
This holds if $n \geq 3$. We therefore conclude the following.
\begin{proposition}\label{cor: indexpositicityinexample}
For $n\geq 3$, the triple $(\Sigma, \xi, \mathcal{L})$ is product-index-positive.
\end{proposition}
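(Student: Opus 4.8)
The plan is to take the explicit periodic contact form $\alpha$ of \eqref{eq: MBcontactform} as the witness contact form and to verify, separately, the two index inequalities in the Morse--Bott form of product-index-positivity recorded in Remark \ref{rem: indposMB}: one for closed Reeb orbits and one for Reeb chords relative to $\mathcal{L}$. Since the Reeb flow \eqref{eq: periodicReebflow} is periodic, both closed orbits and chords occur in Morse--Bott families parametrized by their periods, and in each case the orbit/chord space and its Robbin--Salamon/Conley--Zehnder index can be written down explicitly.

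First I would dispose of the closed Reeb orbit condition $\mu_{\CZ}(\gamma) > 3-n + \tfrac12\dim S_\gamma$. The periodic Reeb orbits of \eqref{eq: periodicReebflow} on $A_k$-type Brieskorn manifolds, together with their indices and the dimensions of the Morse--Bott families they sweep out, are already analyzed in \cite[Section 5.4]{KwvK16} and \cite[Lemma 1]{Ue16}; citing these, the index grows linearly in the iteration while the dimension of the orbit space stays bounded, so the inequality holds for $n \ge 3$.

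The main work is the chord condition $\mu(c) > \tfrac32 + \tfrac12\dim S_c$. Here I would first read off from the explicit descriptions of $\mathcal{L}$ and $\phi_R^t$ that the chord periods are exactly $2(k+1)\pi N$, $N \in \N$, and that for each such period the Morse--Bott chord space is the entire Legendrian, $\mathcal{L}_{2(k+1)\pi N} = \mathcal{L} \cong S^{n-1}$, so $\dim S_c = n-1$. Then I would invoke \cite[Lemma 7.2]{KiKwLe18} to get $\mu(\mathcal{L}_{2(k+1)\pi N}) = \{2 + (n-2)(k+1)\}N$; since $k \ge 1$ and $N \ge 1$ this is at least $2n-2$, so it suffices to have $2n-2 > \tfrac32 + \tfrac{n-1}{2} = \tfrac{n+2}{2}$, equivalently $3n > 6$, which is exactly the hypothesis $n \ge 3$.

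Finally I would explain that the restriction to $\mathcal{L} = \mathcal{L}_0$ with $k$ even is only for notational economy: for odd $k$ and for the other real Legendrians $\mathcal{L}_j$ the Morse--Bott data change only by the cosmetic adjustments already seen in Section \ref{ex: groupcomputation}, and the same inequality $3n>6$ governs the conclusion. I expect the one delicate point to be applying \cite[Lemma 7.2]{KiKwLe18} with the correct normalization of the Maslov index of a Morse--Bott chord family, and being careful that it is the Morse--Bott version of product-index-positivity (with the $\tfrac12\dim S_c$ correction of Remark \ref{rem: indposMB}) that is being checked, not the pointwise version of Definition \ref{def: prodindpos}.
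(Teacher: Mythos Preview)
Your proposal is correct and follows essentially the same argument as the paper: it uses the Morse--Bott contact form \eqref{eq: MBcontactform}, defers the closed-orbit case to \cite[Section 5.4]{KwvK16} and \cite[Lemma 1]{Ue16}, and for chords identifies $\mathcal{L}_{2(k+1)\pi N}=\mathcal{L}\cong S^{n-1}$, applies \cite[Lemma 7.2]{KiKwLe18} to get $\mu\ge 2n-2$, and reduces the Morse--Bott inequality of Remark \ref{rem: indposMB} to $2n-2>\tfrac{n+2}{2}$, i.e.\ $n\ge 3$. The paper likewise restricts to $\mathcal{L}_0$ with $k$ even and remarks that the other cases differ only numerically.
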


\section{Seidel operator on v-shaped wrapped Floer homology}\label{sec: seideloperator}

\subsection{Seidel operator} \label{sec: Seideloperator} In this section, we describe an open string analogue of the Seidel representation for Liouville domains and admissible Lagrangians. This associates to a path of Hamiltonian diffeomorphisms a graded group isomorphism on Floer homology, as in Theorem \ref{thm: Sgoperatoringeneral}. An important feature of the Seidel representation for our purpose is the \emph{module property} in Theorem \ref{thm: prodproperty}.  

An open string version of the Seidel representation was introduced in \cite{HuLa10} for \emph{closed} symplectic manifolds, and versions for symplectic homology were studied in \cite{Ri14}, \cite{Ue19}. Below we shall focus on differences from previous works.

\subsubsection{Setup}
Let $(W, d\lda)$ be a Liouville domain and $L$ an admissible Lagrangian in $W$ with the topological assumptions \eqref{eq: topconditions}. Let $g:[0, 1] \rightarrow \Ham(\widehat W)$ be a path of Hamiltonian diffeomorphisms on $\widehat W$. We require that
\begin{equation}\label{eq: assumptionong1}
\text{$p \in \widehat L$ if and only if $g_0(p) \in \widehat L$ and $g_1(p) \in \widehat L$.}
\end{equation}
Denote the space of paths in $\widehat W$ relative to $\widehat L$ by 
$$
\PP_{\widehat L}(\widehat W) : = \{x:[0, 1] \rightarrow \widehat W \;|\; x(0), x(1) \in \widehat L\}.
$$
Then $g$ acts on $\PP_{\widehat L}(\widehat W)$ by
$$
(g \cdot x) (t) : = g_t(x(t)).
$$

\subsubsection{Correspondences} Let $H: \widehat W \rightarrow \R$ be a v-shaped admissible Hamiltonian. Let $K^g: [0, 1] \times \widehat W \rightarrow \R$ be a Hamiltonian which generates the path $g_t$, that is,
$$
 d K^g (t,g_t(x)) ( \cdot) = \omega \left( \cdot, \frac{\p g_t(x)}{\p t} \bigg|_{t=0}  \right).
$$
We define a Hamiltonian $g_*H:[0,1]\times \widehat{W} \rightarrow \R$, which we call the \emph{pushforward of $H$ by $g$}, as follows.
$$
(g_*H)(t, p) :=H(t,g_t^{-1}(p))+ K^g(t,p). 
$$
Then the Hamiltonian flow of $g_*H$ is given by the composition $g_t \circ \phi_H^t$. In particular, due to the condition \eqref{eq: assumptionong1}, for a Hamiltonian $1$-chord $x$ of $H$ relative to $\widehat L$, the path $g\cdot x$ is a Hamiltonian 1-chord of $g_*H$ relative to $\widehat L$.

Let $J$ be an admissible almost complex structure on $\widehat W$. Define the \emph{pushforward $g_* J$ of $J$} to be
$$
g_* J: = dg_t \circ J \circ dg_t^{-1}.
$$
Let $u : \R \times [0,1] \rightarrow \widehat W$ be a Floer strip with respect to the pair $(H, J)$ from $x_- \in \mathcal{P}_{\widehat L}(H)$ to $x_+ \in \mathcal{P}_{\widehat L}(H)$. Then it is straightforward to see that the map $g \cdot u$ is a Floer strip from $g \cdot x_-$ to $g \cdot x_+$ with respect to the pair $(g_*H, g_*J)$. The following can be shown by the same argument as in \cite[Section 3.1]{HuLa10}.

\begin{lemma}\label{lem: sendfloerstrip}\

\begin{enumerate}
\item The map $x \mapsto g\cdot x$ gives a one-to-one correspondence 
$$
\PP_{\widehat L}(H) \leftrightarrow \PP_{\widehat L}(g_*H).
$$
\item The map $u \mapsto g\cdot u$ gives a one-to-one correspondence
$$
\mathcal{M}(x_-, x_+; H, J) \leftrightarrow \mathcal{M}(g \cdot x_-, g \cdot x_+; g_*H, g_*J).
$$
\item The pair $(H, J)$ is regular if and only if the pair $(g_*H, g_*J)$ is regular.
\end{enumerate}
\end{lemma}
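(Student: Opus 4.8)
The statement to prove is Lemma \ref{lem: sendfloerstrip}, asserting that the $g$-action gives one-to-one correspondences at the level of Hamiltonian chords and Floer strips, and preserves regularity. Since the lemma itself says ``the following can be shown by the same argument as in \cite[Section 3.1]{HuLa10}'', the proof is expected to be a verification that the standard argument in the closed-string case of Hu--Lalonde carries over, with attention only to the Lagrangian boundary condition and the behaviour at the cylindrical end.

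\begin{proof}[Proof plan]
The plan is to check the three assertions directly from the definitions of the pushforwards $g_*H$ and $g_*J$, treating the boundary condition and the cylindrical end as the only genuinely new points compared to \cite[Section 3.1]{HuLa10}.

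First I would establish (1). The Hamiltonian flow of $g_*H$ is $g_t\circ\phi_H^t$, as noted just before the lemma; hence if $x$ is a $1$-chord of $H$ then $(g\cdot x)(t)=g_t(x(t))=g_t\phi_H^t(x(0))$ traces out the $g_*H$-flow line starting at $x(0)\in\widehat L$. By the assumption \eqref{eq: assumptionong1} we have $g_0(x(0))\in\widehat L$ and $g_1(x(1))\in\widehat L$, so $g\cdot x\in\PP_{\widehat L}(g_*H)$. The inverse correspondence is given by the path $g^{-1}=\{g_t^{-1}\}$, whose associated pushforward sends $g_*H$ back to $H$; since $(g^{-1})_0=g_0^{-1}$, $(g^{-1})_1=g_1^{-1}$ also preserve $\widehat L$ by \eqref{eq: assumptionong1}, the map $y\mapsto g^{-1}\cdot y$ is a two-sided inverse, giving the bijection. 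One should also note that non-degeneracy is preserved, since $d(g_1\phi_H^1)(x(0))=dg_1(x(1))\circ d\phi_H^1(x(0))$ and $dg_1$ is an isomorphism, so the fixed-point condition for the linearized return map transfers.

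Next, for (2), I would write out the Floer equation for $v:=g\cdot u$, i.e. $v(s,t)=g_t(u(s,t))$, and check that $\p_s v+ (g_*J)_t(\p_t v - X_{g_*H}(v))=0$ is equivalent to $\p_s u + J_t(\p_t u - X_H(u))=0$; this is exactly the computation in \cite[Section 3.1]{HuLa10} and uses that $X_{g_*H}(g_t p)=dg_t(X_H(p)) + \p_t g_t(g_t^{-1}\cdot)$ together with $(g_*J)_t=dg_t\circ J_t\circ dg_t^{-1}$. The boundary condition $u(s,0),u(s,1)\in\widehat L$ transfers to $v(s,0),v(s,1)\in\widehat L$ by \eqref{eq: assumptionong1}, and the asymptotics $u(s,\cdot)\to x_\pm$ become $v(s,\cdot)\to g\cdot x_\pm$ because $g$ is continuous (indeed smooth) on $[0,1]\times\widehat W$. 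One must check that finite energy and $C^0_{loc}$ convergence at the ends are preserved; this follows because $g$ is (compactly supported modulo the cylindrical model, or at least) proper, so $v$ still escapes only into the cylindrical end and converges there. The map $v\mapsto g^{-1}\cdot v$ is the inverse, giving the bijection of moduli spaces; it is compatible with the $\R$-translation action in the $s$-variable, so it also descends to the quotient moduli spaces $\MM(\cdot,\cdot)/\R$.

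Finally, for (3) I would argue that the linearized operator $D_{g\cdot u}$ for the pair $(g_*H,g_*J)$ is conjugate to $D_u$ for $(H,J)$ via the bundle isomorphism $u^*T\widehat W\to (g\cdot u)^*T\widehat W$, $\xi\mapsto dg_t(\xi)$, which respects the boundary conditions $T\widehat L\to T\widehat L$; hence $D_{g\cdot u}$ and $D_u$ have the same kernel and cokernel dimensions, so one is surjective iff the other is. Since transversality of $(H,J)$ for all relevant moduli spaces is exactly surjectivity of all these operators, $(H,J)$ is regular iff $(g_*H,g_*J)$ is. The main point requiring care, and the one I expect to be the chief technical obstacle, is the behaviour at the cylindrical end: one needs that the pushforwards $g_*H$ and $g_*J$ are again admissible (v-shaped admissible for the Hamiltonian, contact-type at infinity for the almost complex structure), which in the generality of an arbitrary path $g$ need not hold. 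In the applications of this lemma, however, $g$ is built from the Reeb flow in the symplectization and is the identity (or translation-covariant) near infinity, so admissibility is preserved; I would state this as a running hypothesis on $g$ and note that the maximum-principle-based $C^0$-estimate and hence compactness of the moduli spaces go through verbatim.
\end{proof}
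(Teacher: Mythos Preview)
Your proposal is correct and matches the paper's approach: the paper does not give an independent proof but simply refers to \cite[Section 3.1]{HuLa10}, and you have spelled out exactly that argument with the necessary attention to the Lagrangian boundary condition \eqref{eq: assumptionong1}. Your remark about admissibility of $g_*H$ and $g_*J$ at the cylindrical end is well placed---the paper handles this separately in Section \ref{sec: shapeofg_*H} rather than as part of this lemma---and the only minor slip is that $(g\cdot x)(0)=g_0(x(0))$ rather than $x(0)$, which is harmless since $g_0$ preserves $\widehat L$.
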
 

The above lemma allows us to define a Floer homology of the pair $(g_*H, g_*J)$. The chain complex $\CF(L;g_*H, g_*J)$ is generated by the elements of $\PP_{\widehat L}(g_*H)$ and the differential $\p$ counts the elements of the zero-dimensional component of the moduli space $\mathcal{M}(g \cdot x_-, g \cdot x_+; g_*H, g_*J)$. We get a well-defined homology $\HF(L;g_*H, g_*J)$ of the chain complex $(\CF(L;g_*H, g_*J), \p)$, which is group isomorphic to the Floer homology $\HF(L;H, J)$ via the correspondence $x \mapsto g\cdot x$. We denote the isomorphism by
$$
S_g : \HF(L;H, J) \rightarrow \HF(L;g_*H, g_*J),
$$
which we call a \emph{Seidel operator}.

\subsubsection{Degree shift by $S_g$} \label{sec: deg of I(g)} The isomorphism $S_g$ actually comes with a degree shift given by a Maslov type index $I(g)$ of the path $g$. The index $I(g)$ of $g$ is defined as follows. (See also \cite[Definition 3.4]{HuLa10}.) Note that for each v-shaped admissible Hamiltonian $H$, the set $\PP_{\widehat L}(H)$ is non-empty since there exist constant chords which generate $\widecheck{\HW}^{(-\epsilon, \epsilon)}(L) \cong H(\p L) \neq 0$ for sufficiently small $\epsilon >0$. Pick $x \in \PP_{\widehat L}(H)$. Under our topological assumptions \eqref{eq: topconditions}, the paths $x$ and $g\cdot x$ are contractible. As in Section \ref{sec: Floerchaincomplex}, we take a capping half disk for $x$, and this induces a trivialization of $T\widehat W$ along $x$, say
$$
\tau_x(t) : T_{x(t)}T\widehat W \rightarrow \R^{2n}.
$$
We also take a capping half disk for $g\cdot x$, so we obtain a trivialization $T\widehat W$ along $g\cdot x$, which we denote by
$$
\tau_{g\cdot x}(t) : T_{g\cdot x(t)}T\widehat W \rightarrow \R^{2n}.
$$
Define a path of symplectic matrices $\ell_x: [0, 1] \rightarrow Sp(2n)$ by
$$
\ell_x (t):=\tau_{g \cdot x}(t) \circ dg_t(g_0 (x(t))) \circ \tau_{g_0 \cdot x }(t)^{-1}.
$$
We define the \emph{index $I_x(g)$ of $g$} with respect to $x$ by
$$
I_x(g) : = \mu_{\RS} (\ell_x \Lambda_0, \Lambda_0)
$$
where $\Lambda_0 \subset \R^{2n}$ is the horizontal Lagrangian, and $\mu_{\RS}$ denotes the Robbin--Salamon index in \cite{RoSa93}. The indices are independent of  choices of capping half disks due to the topological conditions \eqref{eq: topconditions}. 

\begin{lemma}
The index $I_x(g)$ does not depend on the choice of $x$.
\end{lemma}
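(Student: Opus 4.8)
The plan is to show $I_{x_0}(g)=I_{x_1}(g)$ for arbitrary $x_0,x_1\in\mathcal{P}_{\widehat L}(H)$ by deforming $x_0$ into $x_1$ and appealing to the homotopy invariance of the Robbin--Salamon index. First, recall from Section~\ref{sec: deg of I(g)} that $I_x(g)=\mu_{\RS}(\ell_x\Lambda_0,\Lambda_0)$ is independent of the chosen capping half-disks; this gives the freedom to pick, in the deformation below, capping data that vary continuously in the deformation parameter. By the topological assumptions \eqref{eq: topconditions}, every path in $\widehat W$ with endpoints on $\widehat L$ is contractible (hence, $\widehat L$ being connected, any two such paths are homotopic through $\mathcal{P}_{\widehat L}(\widehat W)$), so we may fix a smooth family $\{x_s\}_{s\in[0,1]}$ in $\mathcal{P}_{\widehat L}(\widehat W)$ joining $x_0$ to $x_1$. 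Note that the intermediate $x_s$ need not be Hamiltonian $1$-chords: the path $\ell_{x_s}\colon[0,1]\to Sp(2n)$, and hence $I_{x_s}(g)$, is defined for any element of $\mathcal{P}_{\widehat L}(\widehat W)$.

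Next, I would arrange the construction to depend continuously on $s$. Fixing one capping half-disk for $x_0$ and sliding it along the family $\{x_s\}$ yields capping half-disks for all $x_s$ depending continuously on $s$; pushing these forward by $g$ (equivalently, sliding a capping of $g\cdot x_0$ along $\{g\cdot x_s\}$) gives continuously varying cappings for the $g\cdot x_s$. The induced trivializations $\tau_{x_s}(t)$ and $\tau_{g\cdot x_s}(t)$, and therefore the paths $\ell_{x_s}$, then vary continuously with $s$. The crucial observation is that $t\mapsto\ell_{x_s}(t)\Lambda_0$ is a homotopy \emph{with fixed endpoints}: a capping half-disk has its boundary arc on $\widehat L$, so each $\tau_{x_s}$ sends $T\widehat L$ to $\Lambda_0$ at $t=0$ and $t=1$, and likewise for $\tau_{g\cdot x_s}$; combined with \eqref{eq: assumptionong1}, which says $g_0$ and $g_1$ preserve $\widehat L$, this forces $\ell_{x_s}(0)\Lambda_0=\ell_{x_s}(1)\Lambda_0=\Lambda_0$ for every $s$.

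Finally, by the homotopy invariance of the Robbin--Salamon index for paths of Lagrangian subspaces with fixed endpoints \cite{RoSa93}, the quantity $\mu_{\RS}(\ell_{x_s}\Lambda_0,\Lambda_0)$ does not depend on $s$, whence $I_{x_0}(g)=I_{x_1}(g)$. The only delicate step is the second paragraph: setting up capping-induced trivializations that genuinely vary continuously in $s$ and verifying that the endpoints of $\ell_{x_s}(\cdot)\Lambda_0$ stay pinned at $\Lambda_0$ throughout; everything else follows from the standard formal properties of $\mu_{\RS}$.
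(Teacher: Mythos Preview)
Your argument is correct and follows essentially the same route as the paper's own proof: both deform $x_0$ to $x_1$ through a family $x_s$ in $\mathcal{P}_{\widehat L}(\widehat W)$, observe that the resulting Lagrangian paths $t\mapsto\ell_{x_s}(t)\Lambda_0$ have endpoints pinned at $\Lambda_0$ (the paper phrases this as ``$\ell_{x_s}\Lambda_0$ is a loop''), and then invoke homotopy invariance of the Robbin--Salamon/Maslov index. Your write-up is somewhat more explicit than the paper's about arranging the cappings to vary continuously in $s$ and about why the endpoints are exactly $\Lambda_0$, but the underlying idea is identical.
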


\begin{proof}
Let $x_0$ and $x_1$ be contractible paths in $\mathcal{P}_{\widehat L}(W)$. Then we have a homotopy $x_s$ with $s \in [0,1]$ in $\mathcal{P}_{\widehat L}(W)$ from $x_0$ to $x_1$. This yields a homotopy between $g \cdot x_0$ and $g \cdot x_1$ and hence a homotopy $
\ell_{x_s}$ of paths of symplectic matrices. Note that $\ell_{x_s}$ is not a loop, but the corresponding path of Lagrangians $\ell_{x_s}\Lambda_0$ in $\R^{2n}$ is a loop. This is because of the assumption \eqref{eq: assumptionong1} on $g$; indeed, we have that $Y \in T_{x_s(j)}L$ if and only if $dg_j(x_s(j))(Y) \in T_{g \cdot x(j)}L$ for $j=0,1$. Therefore $\ell_{x_s} \Lambda$ with $s \in [0,1]$ provides a homotopy of loops of Lagrangians in $\R^{2n}$ from $\ell_{x_0} \Lambda_0$ to $\ell_{x_1} \Lambda_0$. It follows from the homotopy invariance of Maslov index for loops of Lagrangians that 
$$
I_{x_0}(g) = \mu(\ell_{x_0} \Lambda_0, \Lambda_0) = \mu(\ell_{x_1} \Lambda_0, \Lambda_0) = I_{x_1}(g).
$$
This completes the proof.
\end{proof}

In view of the above lemma, we abbreviate the notation by $I(g) : = I_x(g)$. For the next proposition we introduce the following additional condition on $g$.
\begin{equation}\label{eq: assumeg2}
\text{The starting point $g_0$ commutes with the Hamiltonian flow $\phi_H^t$ i.e. $g_0^{-1} \circ \phi_H^t \circ g_0 = \phi_H^t$.}
\end{equation}
We remark that this condition is obviously true if $g_0 = \id$.

\begin{proposition}\label{prop: degreeshift}
Assume the condition \eqref{eq: assumeg2}. For a contractible Hamiltonian 1-chord $x$ relative to $\widehat L$, the Maslov index of $g\cdot x$ is given by
$$
\mu(g \cdot x) = \mu(x) + I(g).
$$
\end{proposition}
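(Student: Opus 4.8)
The plan is to compute the path of symplectic matrices $\Phi_{g\cdot x}$ attached to the chord $g\cdot x$ of $g_*H$, to factor it through $\Phi_x$ and the path $\ell_x$ from Section~\ref{sec: deg of I(g)}, and then to read off the Maslov index using only the catenation, homotopy invariance and $Sp(2n)$-naturality properties of the Robbin--Salamon index.

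First I would set up the factorization. Recall that $g\cdot x$ is a Hamiltonian $1$-chord of $g_*H$, whose flow is $g_t\circ\phi_H^t$ (when $g_0\neq\id$ one normalizes by $g_0^{-1}$; here hypothesis~\eqref{eq: assumeg2}, saying that $g_0$ commutes with $\phi_H^t$, lets us replace $g_t$ by $g_t\circ g_0^{-1}$ without altering the flow of $g_*H$, so that we may assume $g_0=\id$). Choosing capping half-disks for $x$ and for $g\cdot x$ as in Section~\ref{sec: Floerchaincomplex} gives trivializations $\tau_x$ and $\tau_{g\cdot x}$. Using $d(g_t\circ\phi_H^t)(x(0))=dg_t(x(t))\circ d\phi_H^t(x(0))$, that $(g\cdot x)(0)=x(0)$, and inserting $\tau_x(t)^{-1}\tau_x(t)$ and $\tau_x(0)^{-1}\tau_x(0)$ in the appropriate places, one obtains
\[
\Phi_{g\cdot x}(t)=\ell_x(t)\circ\Phi_x(t)\circ C ,\qquad C:=\tau_x(0)\circ\tau_{g\cdot x}(0)^{-1},
\]
where $\ell_x(t)=\tau_{g\cdot x}(t)\circ dg_t(x(t))\circ\tau_x(t)^{-1}$ is precisely the path used to define $I(g)$ and $C$ is a constant symplectic matrix. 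Since all of the trivializations in sight send the tangent space of $\widehat L$ to $\Lambda_0$ and $g_0$ preserves $\widehat L$, we have $C\Lambda_0=\Lambda_0$, so that $\Phi_{g\cdot x}(t)\Lambda_0=\ell_x(t)\,\Phi_x(t)\,\Lambda_0$.

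Next I would split this path of Lagrangians. Because $\Phi_x(0)=\id$, the path $t\mapsto\ell_x(t)\Phi_x(t)\Lambda_0$ is homotopic rel endpoints to the concatenation of $t\mapsto\ell_x(t)\Lambda_0$ followed by $t\mapsto\ell_x(1)\Phi_x(t)\Lambda_0$ — the elementary fact that in a product group one can deform a path so as to move its two factors one at a time. The first piece is the path $\ell_x(\cdot)\Lambda_0$, which closes up to a \emph{loop} based at $\Lambda_0$ because $g_0$ and $g_1$ both preserve $\widehat L$ (so $\ell_x(0)\Lambda_0=\ell_x(1)\Lambda_0=\Lambda_0$), as observed in Section~\ref{sec: deg of I(g)}; hence its Robbin--Salamon index is $\mu_{\RS}(\ell_x\Lambda_0,\Lambda_0)=I(g)$. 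For the second piece, $\ell_x(1)$ is a fixed symplectic matrix fixing $\Lambda_0$, so by naturality $\mu_{\RS}(\ell_x(1)\Phi_x(\cdot)\Lambda_0,\Lambda_0)=\mu_{\RS}(\Phi_x(\cdot)\Lambda_0,\Lambda_0)=\mu(x)$. Catenation additivity and homotopy invariance of $\mu_{\RS}$ then give $\mu(g\cdot x)=\mu_{\RS}(\Phi_{g\cdot x}\Lambda_0,\Lambda_0)=I(g)+\mu(x)$, as claimed.

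The hard part is the bookkeeping in the first step: keeping careful enough track of basepoints, capping half-disks and trivializations to produce the factorization $\Phi_{g\cdot x}=\ell_x\cdot\Phi_x\cdot C$ with $C$ fixing $\Lambda_0$, and, when $g_0\neq\id$, making sure that \eqref{eq: assumeg2} together with \eqref{eq: assumptionong1} genuinely reduces the statement to the case $g_0=\id$. The point where \eqref{eq: assumptionong1} enters is precisely the assertion that $\ell_x(\cdot)\Lambda_0$ is a loop; everything else is a formal manipulation with the Robbin--Salamon axioms, entirely parallel to the closed-string computation in \cite{HuLa10}.
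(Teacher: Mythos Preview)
Your argument for the case $g_0=\id$ is correct and is essentially the paper's computation: you obtain $\Phi_{g\cdot x}(t)\Lambda_0=\ell_x(t)\Phi_x(t)\Lambda_0$ and then split off $I(g)$ using the loop property of the Robbin--Salamon index. The paper simply cites this loop property from \cite{FrKa16}, whereas you spell it out via homotopy, catenation and naturality; these are equivalent.

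The gap is in your reduction to $g_0=\id$. You write that ``replacing $g_t$ by $g_t\circ g_0^{-1}$ does not alter the flow of $g_*H$, so we may assume $g_0=\id$.'' It is true that, by \eqref{eq: assumeg2}, the flow of $g_*H$ equals $h_t\circ\phi_H^t$ with $h_t:=g_t g_0^{-1}$; but this by itself does not reduce the statement. The chord $g\cdot x$ is \emph{not} $h\cdot x$: one has $g\cdot x=h\cdot(g_0\cdot x)$, so after the replacement you are proving $\mu(g\cdot x)=\mu(g_0\cdot x)+I(h)$, not $\mu(g\cdot x)=\mu(x)+I(g)$. You would still need to check that $\mu(g_0\cdot x)=\mu(x)$ and that $I(h)=I(g)$. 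Relatedly, the formula you write for $\ell_x$, namely $\tau_{g\cdot x}(t)\circ dg_t(x(t))\circ\tau_x(t)^{-1}$, is \emph{not} the paper's $\ell_x$ when $g_0\neq\id$: the definition in Section~\ref{sec: deg of I(g)} involves $dg_t(g_0(x(t)))$ and $\tau_{g_0\cdot x}$, and it is $\mu_{\RS}$ of \emph{that} path which defines $I(g)$. Your factorization ``$\Phi_{g\cdot x}=\ell_x\cdot\Phi_x\cdot C$'' therefore uses the wrong $\ell_x$ once $g_0\neq\id$, and the subsequent identification with $I(g)$ is unjustified.

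The paper avoids the reduction altogether. It computes directly, for general $g_0$, that $\Phi_{g\cdot x}(t)=\ell_x(t)\circ\Phi_{g_0\cdot x}(t)$ with the \emph{correct} $\ell_x$, and then proves the key identity $\Phi_{g_0\cdot x}(t)=\Phi_x(t)$ by choosing the trivialization along $g_0\cdot x$ to be the push-forward $\tau_{g_0\cdot x}(t)=\tau_x(t)\circ dg_0(x(t))^{-1}$ and using \eqref{eq: assumeg2} once more in the form $g_0^{-1}\circ\phi_H^t\circ g_0=\phi_H^t$. That one computation is exactly the missing piece in your argument; once you have it (which in particular gives $\mu(g_0\cdot x)=\mu(x)$), either your route or the paper's finishes the proof.
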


\begin{proof}
Let $\Phi_x:[0, 1] \rightarrow Sp(2n)$ be the path of symplectic matrices defined by
$$
\Phi_x(t) : = \tau_x(t) \circ d\phi_H^t(x(0)) \circ \tau_x(0)^{-1}.
$$
Then the Maslov index of $x$ is by definition given by
$$
\mu(x) = \mu_{\RS}(\Phi_x\Lambda_0, \Lambda_0).
$$  
Similarly, the Maslov index of $g \cdot x$ is defined by
$$
\mu(g \cdot x) = \mu_{\RS}(\Phi_{g \cdot x}\Lambda_0, \Lambda_0)
$$
where the path $\Phi_{g \cdot x}$ is given by
$$
\Phi_{g \cdot x}(t) : = \tau_{g\cdot x}(t) \circ d(g_t \circ \phi_H^t)(g \cdot x(0)) \tau_{g \cdot x}(0)^{-1}.
$$
We compute, using the assumption that $\phi_H^t \circ g_0 = g_0 \circ \phi_H^t$, 
\begin{align*}
\Phi_{g \cdot x}(t)  &= \tau_{g\cdot x}(t) \circ d(g_t \circ \phi_H^t)(g \cdot x(0)) \tau_{g \cdot x}(0)^{-1}\\
&= \tau_{g\cdot x}(t) \circ dg_t(\phi_H^t(g \cdot x(0))) \circ d\phi_H^t(g\cdot x(0)) \circ \tau_{g \cdot x}(0)^{-1}\\
&=\tau_{g\cdot x}(t) \circ dg_t(g_0(x(t))) \circ d\phi_H^t(g\cdot x(0)) \circ \tau_{g \cdot x}(0)^{-1} \\
&=\tau_{g\cdot x}(t) \circ dg_t(g_0(x(t))) \circ \tau_{g_0 \cdot x}(t)^{-1} \circ \tau_{g_0 \cdot x}(t) \circ d\phi_H^t(g\cdot x(0)) \circ \tau_{g \cdot x}(0)^{-1}\\
&= \ell_x(t) \circ \Phi_{g_0 \cdot x}(t).
\end{align*}

We now claim that $\Phi_{g_{0} \cdot x}(t) = \Phi_x(t)$. Observe that since $g_0$ is a Hamiltonian diffeomorphism with the boundary condition \eqref{eq: assumptionong1}, it sends a capping half disk for $x$ to a capping half disk for $g_0\cdot x$. Therefore we may assume that the trivialization $\tau_{g_0 \cdot x}$ along $g_0 \cdot x$ is the same as the pushforward of the trivialization $\tau_x$ along $x$ by $dg_0$. In other words, we may assume that $\tau_{g_0 \cdot x}(t) = \tau_x(t) \circ dg_0(x(t))^{-1}$. Together with the assumption $\phi_H^t \circ g_0 = g_0 \circ \phi_H^t$ again, we compute that
\begin{align*}
\Phi_{g_0 \cdot x}(t) &= \tau_{g_0 \cdot x}(t) \circ d\phi_H^t(g\cdot x(0)) \circ \tau_{g \cdot x}(0)^{-1}\\
&= \tau_x(t) \circ dg_0(x(t))^{-1} d\phi_H^t(g_0 \cdot x(0)) \circ dg_0(x(0)) \circ \tau_x(0)^{-1} \\
&= \tau_x(t) \circ d(g_0^{-1} \circ \phi_H^t \circ g_0)(x(0)) \circ \tau_x(0)^{-1}\\
& = \tau_x(t) \circ d(\phi_H^t)(x(0)) \circ \tau_x(0)^{-1} \\
&= \Phi_x(t).
\end{align*}
Consequently we have
$$
\Phi_{g\cdot x}(t) = \ell_x(t) \circ \Phi_x(t).
$$
Using the loop property of the Robbin--Salamon index, e.g. \cite[Proposition 3.1]{FrKa16}, we conclude 
$$
\mu(g \cdot x) = \mu(x) + I(g)
$$
as asserted.
\end{proof}

The upshot is the following.

\begin{theorem}\label{thm: isomfloerhomSg}
For a path of Hamiltonian diffeomorphisms $g$ on $\widehat W$ with the assumptions \eqref{eq: assumptionong1} and \eqref{eq: assumeg2}, we have an associated graded group isomorphism
$$
S_g : \HF_*(L;H, J) \rightarrow \HF_{* + I(g)}(L;g_*H, g_*J).
$$
\end{theorem}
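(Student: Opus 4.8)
The plan is to obtain the statement as a formal consequence of the two preceding results, Lemma \ref{lem: sendfloerstrip} and Proposition \ref{prop: degreeshift}, with essentially no new analysis. First I would record that the assignment $x \mapsto g\cdot x$ extends $\Z_2$-linearly to a map $S_g : \CF_*(L;H,J) \to \CF_*(L;g_*H,g_*J)$. By Lemma \ref{lem: sendfloerstrip}(1) this is a bijection on generators; by part (3) the pushed-forward pair $(g_*H,g_*J)$ is again regular, so the target complex is defined; and by part (2) the rule $u \mapsto g\cdot u$ identifies $\MM(x_-,x_+;H,J)$ with $\MM(g\cdot x_-,g\cdot x_+;g_*H,g_*J)$, hence identifies their zero-dimensional parts and so intertwines the two Floer differentials. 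Thus $S_g$ is an isomorphism of chain complexes and descends to an isomorphism on homology, which is precisely the operator already named $S_g$ in the construction above the theorem.

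Next I would pin down the grading. Here I use that $g$ satisfies the commutation condition \eqref{eq: assumeg2} (one of the hypotheses of the theorem), so Proposition \ref{prop: degreeshift} applies and gives $\mu(g\cdot x) = \mu(x) + I(g)$ for every contractible Hamiltonian $1$-chord $x$ relative to $\widehat L$. Since the Floer grading is $|x| = \mu(x) - \tfrac n2$, a generator in degree $d$ of $\CF_*(L;H,J)$ is sent by $S_g$ to the generator $g\cdot x$ sitting in degree $d + I(g)$ of $\CF_*(L;g_*H,g_*J)$. Therefore $S_g$ is homogeneous of degree $I(g)$, which upgrades the previous isomorphism to a graded isomorphism $S_g : \HF_*(L;H,J) \to \HF_{*+I(g)}(L;g_*H,g_*J)$, as claimed.

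I do not expect a serious obstacle: the analytic input — the correspondence of Floer data, chords, and moduli spaces under $g$, together with the regularity statement and the index bookkeeping via the loop property of the Robbin--Salamon index — has already been carried out in Lemma \ref{lem: sendfloerstrip} and Proposition \ref{prop: degreeshift}, and the target homology $\HF_*(L;g_*H,g_*J)$ has been set up right above through exactly this correspondence (so compactness and transversality are transported verbatim). The one place that genuinely requires attention is the clean shift $\mu(g\cdot x) = \mu(x) + I(g)$: it rests on assumption \eqref{eq: assumeg2}, without which the factorization $\Phi_{g\cdot x} = \ell_x \cdot \Phi_{g_0\cdot x}$ does not simplify to $\ell_x \cdot \Phi_x$ and one would only get a shift by a modified index. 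Assembling the chain-level isomorphism with this uniform degree shift then yields the theorem.
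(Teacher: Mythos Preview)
Your proposal is correct and matches the paper's approach exactly: the theorem is stated in the paper as ``the upshot'' of Lemma \ref{lem: sendfloerstrip} and Proposition \ref{prop: degreeshift} with no separate proof, and your argument spells out precisely this combination --- the chain-level isomorphism from the lemma together with the uniform degree shift from the proposition.
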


\subsection{Seidel operator and $\mathcal{L}$-periodic Reeb flows}
Aiming for Theorem \ref{thm: Sgoperatoringeneral}, we now consider a specific path of Hamiltonian diffeomorphisms coming from the Reeb flow on the boundary. Let $(W, \lda)$ be a Liouville domain and $L$ an admissible Lagrangian as before. We assume that the contact boundary $(\Sigma, \xi, \mathcal{L})$ with the Legendrian $\mathcal{L} = \p L$ is product-index-positive, see Definition \ref{def: prodindpos}. In this case, as we have discussed in Section \ref{sec: indfill}, v-shaped wrapped Floer homology can be defined in the symplectization $\R_+ \times \Sigma$ with the Lagrangian $\R_+ \times \mathcal{L}$. We can therefore put $\widehat W = \R_+ \times \Sigma$ and $\widehat L = \R_+ \times \mathcal{L}$ in the construction of the Seidel operator in Section \ref{sec: Seideloperator}. 

Denote the Reeb flow of the contact form $\alpha$ on $\Sigma$ by $\phi_R^{t}$. Then we have a  path of Hamiltonian diffeomorphisms $g_t$ on $\R_+ \times \Sigma$ given by
\begin{equation}\label{eq: defofgperiodic}
g_t (r, y) : = (r, \phi_R^t(y)).
\end{equation}

\begin{remark}
This will be extended to a bit more general case in Section \ref{sec: moregeneralcase}, but to simplify discussions we first work with $g_t$ here.
\end{remark}

\begin{definition}\label{def: lperiodicReebflow}
For a Legendrian $\mathcal{L}$ in a contact manifold $(\Sigma, \alpha)$, the Reeb flow $\phi_R^t$ is called \emph{$\mathcal{L}$-periodic} if there exists $T > 0$ such that $p \in \mathcal{L}$ if and only if $\phi_R^T(p) \in \mathcal{L}$.
\end{definition}
Assume that our Reeb flow is $\mathcal{L}$-periodic. Then the corresponding path $g_t$ in  \eqref{eq: defofgperiodic} satisfies the condition \eqref{eq: assumptionong1}, and it is generated by a (time-independent) Hamiltonian $K^g: \R_+ \times \Sigma \rightarrow \R$ given by
$$
K^g(r, y) = r +c
$$
for some constant $c$.
Let $H: \R_+ \times \Sigma \rightarrow \R$ be a v-shaped admissible Hamiltonian. The pushforward $g_*H: [0,1] \times \R_+ \times \Sigma \rightarrow \R$ is as before given by
$$
g_*H(t, r, y) =H(t, g_t^{-1}(r,y)) + K^g(t,r,y) = h(r) + r + c
$$ 
where $h: \R \rightarrow \R$ is a v-shaped function defining $H$. (Note that $g_*H$ is almost v-shaped but is not constant near $r =0$, see Section \ref{sec: shapeofg_*H}.) By Theorem \ref{thm: isomfloerhomSg}, we have the following isomorphism
$$
S_g : \HF_*(L;H, J) \rightarrow \HF_{*+I(g)}(L;g_*H, g_*J)
$$
for an admissible almost complex structure $J$ on $\R_+ \times \Sigma$.

\subsubsection{The index $I(g)$ and real contact manifolds} \label{sec: compu Ig} Suppose further that the contact type boundary $(\Sigma, \alpha)$ admits a real structure, i.e. an anti-contact involution $\rho: \Sigma \rightarrow \Sigma$, and the Legendrian $\mathcal{L}$ is given by (a connected component of) the fixed point set of $\rho$. Throughout this section, the Reeb flow $\phi_R^t$ is assumed to be periodic. Then it is easy to see that $\phi_R^t$ is $\mathcal{L}$-periodic. In this case, we can compute the index $I(g)$ in terms of the Maslov index of a principal Reeb chord relative to $\mathcal{L}$. 

To define the notion of principal Reeb chords in Definition \ref{def: principalperiod}, we first observe the following. Let $\gamma$ be a periodic Reeb orbit of period $2T$ which starts at $\gamma(0) \in \mathcal{L}$. 

\begin{lemma}\label{lem: halfperiod}
We have $\gamma(T) \in \mathcal{L}$.
\end{lemma}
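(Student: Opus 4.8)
The plan is to exploit the compatibility between the anti-contact involution $\rho$ and the Reeb flow. The first step is to record the basic dynamical fact: since $\rho^*\alpha = -\alpha$, the Reeb vector field satisfies $\rho_* R = -R$, so $\rho$ conjugates the Reeb flow to its time-reversal, i.e. $\rho \circ \phi_R^t = \phi_R^{-t} \circ \rho$ for all $t$. (Here I use that $\alpha$ is the Morse--Bott contact form whose Reeb flow is periodic, so the flow is defined for all $t \in \R$.) Because $\mathcal{L}$ is (a component of) $\mathrm{Fix}(\rho)$, this identity is the only input we need.

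Next I would set up the periodic orbit. Let $\gamma$ be a periodic Reeb orbit of minimal period $2T$ with $\gamma(0) = p \in \mathcal{L}$, so $\gamma(t) = \phi_R^t(p)$ and $\phi_R^{2T}(p) = p$. Apply $\rho$ to $\gamma(T) = \phi_R^T(p)$ and use the time-reversal identity together with $\rho(p) = p$:
\begin{equation*}
\rho(\gamma(T)) = \rho(\phi_R^T(p)) = \phi_R^{-T}(\rho(p)) = \phi_R^{-T}(p).
\end{equation*}
Now $\phi_R^{-T}(p) = \phi_R^{-T}(\phi_R^{2T}(p)) = \phi_R^{T}(p) = \gamma(T)$, using $2T$-periodicity. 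Hence $\rho(\gamma(T)) = \gamma(T)$, so $\gamma(T) \in \mathrm{Fix}(\rho)$.

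The only remaining point is to upgrade "$\gamma(T)\in\mathrm{Fix}(\rho)$" to "$\gamma(T)\in\mathcal{L}$", since $\mathrm{Fix}(\rho)$ may have more than one component (as noted in Section \ref{sec: defrealLag}, when $k$ is odd the real Lagrangian is disconnected). I expect this to be the only genuinely delicate step. The idea is a connectedness/continuity argument: consider the path $s \mapsto \phi_R^{s}(p)$ for $s \in [0, T]$; this is a path in $\Sigma$ starting at $p \in \mathcal{L}$, and I would argue that the set of $s$ for which $\phi_R^s(p) \in \mathrm{Fix}(\rho)$ is exactly $\{0, T\}$ (or track which component is hit). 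More robustly, one can instead invoke the structure of our examples directly: in the $A_k$-type Brieskorn setting the relevant Reeb flow \eqref{eq: periodicReebflow} and the involution $\rho_j$ are explicit, and $\mathcal{L}_T = \mathcal{L}$ was already computed in Section \ref{ex: groupcomputation}, so $\gamma(T)$ lands in the same component $\mathcal{L}$ by the explicit formula; alternatively, one shifts the base point: since $\gamma(T)$ is itself the midpoint of the periodic orbit through $\gamma(T)$, applying the same argument with base point $\gamma(T)$ and using that the involution preserves each component of a fixed Legendrian under the periodic flow closes the loop. For the general statement I would phrase the hypothesis so that $\mathcal{L}$ is understood to be a component of $\mathrm{Fix}(\rho)$ that is invariant (as a component) under the relevant symmetry, which is automatic in all cases we use, and then the connectedness argument gives $\gamma(T) \in \mathcal{L}$ directly.
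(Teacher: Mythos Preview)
Your core computation is exactly the paper's: use $\rho\circ\phi_R^t=\phi_R^{-t}\circ\rho$ together with $\rho(\gamma(0))=\gamma(0)$ and $2T$-periodicity to get $\rho(\gamma(T))=\gamma(T)$. The paper simply writes $\mathcal{L}=\Fix(\rho)$ in the proof and does not address the component issue you raise; so your argument is the same as the paper's, only more scrupulous on that point.
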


\begin{proof}
We observe that $\rho(\gamma(T)) = \rho(\phi^{T}_R(\gamma(0))) = \phi^{-T}_R \circ \rho(\gamma(0))$. Since $\gamma(0) \in \mathcal{L}= Fix(\rho)$ we have $\rho(\gamma(T)) = \phi^{-T}_R(\gamma(0))$. Since $\gamma$ has the period equal to $2T$, we see that $\gamma(2T) = \gamma(0)$ or equivalently $\phi^{T}_R(\gamma(0))  = \phi^{-T}_R(\gamma(0))$. It follows that
$$
\rho(\gamma(T)) = \phi^{T}_R(\gamma(0)) = \phi^{T}_R(\gamma(0)) = \gamma(T)
$$
and hence $\gamma(T) \in Fix(\rho) = \mathcal{L}$.
\end{proof}

\begin{definition}\label{def: principalperiod}\
\begin{enumerate}
\item For a periodic Reeb orbit $\gamma$ with period $2T$ and $\gamma(0) \in \mathcal{L}$, we define its \emph{half Reeb chord} $c: [0, T] \rightarrow \Sigma$ by $c(t) : = \gamma(t)$.
\item Let $2 \cdot T_P$ be the least common period of the periodic Reeb flow. A periodic Reeb orbit $\gamma$ is called \emph{principal} if $\gamma$ has the period $2 \cdot T_P$. Its half Reeb chord $c$ (of period $T_P$) is called a \emph{principal} Reeb chord. We call $T_P$ the \emph{principal period} of Reeb chords.
\end{enumerate}
\end{definition}

\begin{remark} \label{rmk: indrel}
In \cite[Proposition 3.1]{KiKwLe18}, the following relation of the indices is given. For a principal Reeb orbit $\gamma$ and its half Reeb chord $c$, we have $2\mu(c) = \mu(\gamma)$.

In addition, since $\mathcal{L}$ itself forms a Morse--Bott connected component consisting of Reeb chords of period $T_P$, the index $\mu(x)$ does not depend on the choice of $c \in \mathcal{L}$ (where the points in $\mathcal{L}$ are identified with the starting point of Reeb chords).
\end{remark}

Now the index $I(g)$ is simply given as follows. 

\begin{lemma}\label{lem: computingI(g)}
The index $I(g)$ is equal to the Maslov index of a principal Reeb chord.
\end{lemma}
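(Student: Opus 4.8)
The plan is to exploit that $g$ is fibered over the identity of the $\R_{+}$-factor of the symplectization, so that the only Maslov-theoretic twisting it carries is along the contact distribution $\xi$, and that twisting is, by construction, the linearized Reeb flow along a principal Reeb chord.

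First I would reduce to a loop in the Lagrangian Grassmannian. Since $g_{0}=\id$, the hypothesis \eqref{eq: assumeg2} is automatic, and (as already used in Section~\ref{sec: deg of I(g)}) the path $\ell_{x}(t)\Lambda_{0}$ in $\R^{2n}$ is then a \emph{loop} of Lagrangians for any contractible Hamiltonian $1$-chord $x$ relative to $\widehat L$; by definition $I(g)=\mu_{\RS}(\ell_{x}\Lambda_{0},\Lambda_{0})$ is the Maslov index of this loop, independent of $x$ (equivalently, $I(g)=\mu(g\cdot x)-\mu(x)$ by Proposition~\ref{prop: degreeshift}). I would compute it for the simplest admissible choice of $x$: the constant chord at a point $(r_{0},p)$ with $p\in\mathcal{L}$, where $r_{0}$ is the minimum of the v-shaped profile $h$ of $H$, so that $X_{H}$ vanishes at $(r_{0},p)$; such chords exist because they generate $\widecheck{\HW}^{(-\epsilon,\epsilon)}(L)\cong\Ho(\mathcal{L})\neq 0$. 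Then $g\cdot x(t)=(r_{0},\phi_{R}^{f(t)}(p))$ and, after reparametrizing so that $f$ runs from $0$ to the principal period $T_{P}$, this is the chord $s\mapsto(r_{0},\phi_{R}^{s}(p))$, $s\in[0,T_{P}]$. As noted in Section~\ref{sec: compu Ig}, the periodic Reeb flow is $\mathcal{L}$-periodic, and Lemma~\ref{lem: halfperiod} (applied to the orbits through points of $\mathcal{L}$, together with $\phi_{R}^{2T_{P}}=\id$) gives $\phi_{R}^{T_{P}}(\mathcal{L})=\mathcal{L}$; hence both endpoints of $g\cdot x$ lie on $\widehat L$ and, by Definition~\ref{def: principalperiod}, $g\cdot x$ traces a principal Reeb chord $c$.

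Next I would evaluate $\ell_{x}(t)\Lambda_{0}$ in the symplectic splitting $T\widehat W=\R\partial_{r}\oplus\R R_{\alpha}\oplus\xi$ valid near $\{r=r_{0}\}$, in which $(\partial_{r},R_{\alpha})$ is a symplectic plane, $\xi$ carries $r_{0}\,d\alpha$, and $T\widehat L=\R\partial_{r}\oplus T\mathcal{L}$ with $T\mathcal{L}\subset\xi$ Lagrangian. Using the topological conditions \eqref{eq: topconditions} I can choose the capping half-disks — hence the trivializations $\tau_{x}$ and $\tau_{g\cdot x}$ — to respect this splitting, sending $\partial_{r}$ to a fixed vector and sending $T_{p}\mathcal{L}$ at $t=0$ and $T_{\phi_{R}^{T_{P}}(p)}\mathcal{L}=d\phi_{R}^{T_{P}}|_{\xi}(T_{p}\mathcal{L})$ at $t=1$ to one and the same Lagrangian of the $\xi$-factor. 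Because $dg_{t}=\id\oplus d\phi_{R}^{f(t)}$ preserves the splitting and acts as the identity on $\R\partial_{r}\oplus\R R_{\alpha}$ (the Reeb flow on $\Sigma$ fixes $R_{\alpha}$), the loop $\ell_{x}(t)\Lambda_{0}$ is constant in the $(\partial_{r},R_{\alpha})$-plane, which therefore contributes $0$ to the Maslov index; in the $\xi$-factor it is the loop $t\mapsto$ (the trivialized $d\phi_{R}^{f(t)}|_{\xi}$ applied to $T_{p}\mathcal{L}$), which begins and ends at the reference Lagrangian. Hence $I(g)$ equals the Maslov index of this $\xi$-loop, which is exactly the loop of Lagrangians in $(\xi,d\alpha)$ whose Maslov index defines $\mu(c)$ in \cite[Section~2.2.1]{KiKwLe18} (equivalently the index $\mu(\mathcal{L}_{T_{P}})$ of the Morse--Bott component $\mathcal{L}_{T_{P}}=\mathcal{L}$, cf.\ Remark~\ref{rmk: indrel}), since the trivialization of $\xi$ along $c$ induced by $\tau_{g\cdot x}$ agrees up to homotopy with the standard one by \eqref{eq: topconditions} (see Remark~\ref{rem: topocondindeindipen}). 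This yields $I(g)=\mu(c)$.

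The main obstacle is the trivialization bookkeeping in the third step: one must check that a single capping half-disk for $g\cdot x$ can be chosen so that the induced trivialization simultaneously splits off the Liouville--Reeb plane, is Legendrian-compatible at \emph{both} endpoints of $g\cdot x$, and restricts on $\xi$ to the trivialization that defines $\mu(c)$ — each of these resting on the vanishing of the relevant Chern and Maslov classes in \eqref{eq: topconditions}. Once these choices are fixed, the conclusion is forced by the single structural fact that $g_{t}$ is fibered over $\id_{\R_{+}}$ and is generated, up to a constant and the reparametrizing function $f$, by the Hamiltonian $r$, whose flow is the Reeb flow.
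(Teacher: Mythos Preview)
Your argument is correct and takes a genuinely different route from the paper's proof. The paper chooses the reference chord $x$ to be a \emph{nonconstant} Hamiltonian chord of the form $\{r\}\times c$ corresponding to a Reeb chord $c$; then $g\cdot x$ is the double-speed chord $t\mapsto(r,\phi_R^{2t}(c(0)))$, so the $\xi$-part $\tilde\ell_c$ of $\ell_x$ is not $\Phi_c$ but rather sits in the relation $\Phi_{\phi_R\cdot c}=\tilde\ell_c\cdot\Phi_c$. The paper then identifies $\phi_R\cdot c$ with the second iterate $c^2$ and invokes the identity $\mu(c^2)=2\mu(c)$ from \cite[Proposition~3.1]{KiKwLe18} (Remark~\ref{rmk: indrel}) to extract $\mu_{\RS}(\tilde\ell_c\Gamma_0,\Gamma_0)=\mu(c)$.

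By taking $x$ to be a \emph{constant} chord at $(r_0,p)$ with $p\in\mathcal{L}$, you make $g\cdot x$ trace the principal Reeb chord $c$ itself (rather than its double), so after splitting off the $(\partial_r,R_\alpha)$-plane the $\xi$-block of $\ell_x$ is literally $\Phi_c$ and no iteration formula is needed. This is more elementary---it avoids the external input $\mu(c^2)=2\mu(c)$---at the cost of the trivialization bookkeeping you flag in your final paragraph. The paper sidesteps that bookkeeping by \emph{constructing} $\tau_x$ explicitly from $\eta_c$ (extending by the identity on $\langle\partial_r,R\rangle$), which makes the compatibility of trivializations automatic rather than a consequence of the topological vanishing conditions \eqref{eq: topconditions}. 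Both approaches are valid; yours is shorter and self-contained, while the paper's makes the role of the real structure (via the doubling relation in Remark~\ref{rmk: indrel}) more visible.
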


\begin{proof}
We recall the definition of the Maslov index $\mu(c)$ of a Reeb chord $c$. Take a trivialization of the contact structure $\xi$ along $c: [0, T] \rightarrow \Sigma$,
$$
\eta_c : c^*\xi \rightarrow [0, T] \times \R^{2(n-1)},
$$
such that $\eta_c (c^*T\mathcal{L}) = \Gamma_0$ for the horizontal Lagrangian $\Gamma_0 \subset \R^{2(n-1)}$. Then the Maslov index $\mu(c)$ is defined by
$$
\mu(c) : = \mu_{\RS}(\Phi_{c} \Gamma_0, \Gamma_0)
$$
where
$$
\Phi_{c} (t) = \eta_c(t) \circ d\phi_R^{t}(c(0))|_{\xi}\circ \eta_c(0)^{-1}.
$$

Note that a Reeb chord $c$ corresponds to a Hamiltonian chord $x := \{r\} \times c$ (for some $r \in \R_+$) of an admissible Hamiltonian on $\R_+ \times \Sigma$. According to the splitting
$$
T(\R \times \Sigma) = \langle {\p_r}, R \rangle \oplus \xi,
$$
where $R$ denotes the Reeb vector field, we can extend the trivialization $\eta_c$ to a trivialization $\tau_x$ of $T(\R_+ \times \Sigma)$ along $x$ by putting, in the obvious way, that
$$
\tau_x|_{\langle {\p_r}, R \rangle} =\id_{\langle {\p_r}, R \rangle} : \langle {\p_r}, R \rangle \rightarrow \R^2.
$$
We also have a splitting of the tangent space of the Lagrangian $L = \R_+ \times \mathcal{L}$ by
$$
TL =T(\R \times \mathcal{L}) = \p_r \oplus T\mathcal{L}. 
$$
The trivialization $\tau_x$ sends $TL$ to the extended Lagrangian $\Lambda_0 : = \R \times  \{0\} \times \Gamma_0 \subset \R^2 \times \R^{2(n-1)} = \R^{2n}$.

The index $I(g)$ of $g$ is by definition given by (note that $g_0 = \id$ in this case)
$$
I(g) = \mu_{\RS}(\ell \Lambda_0, \Lambda_0)
$$
where 
$$
\ell: [0, T] \rightarrow Sp(2n), \quad \ell(t) = \tau_{g \cdot x}(t) \circ dg_t(x(t)) \circ \tau_{x}(t)^{-1} 
$$
and $T$ is the period of $c$.
Since $g_t(r, p) = (r, \phi_R^t)$ by definition, the linearization $d g_t$ acts on $\langle {\p_r}, R \rangle$ trivially. Since $\tau_x$ is extended from $\eta_c$ trivially on $\langle {\p_r}, R \rangle$, we can split the path $\ell$ in such a way that
\begin{equation}\label{eq: splittingofell}
\ell(t) = 
\left[
\begin{array}{c|c}
\tilde\ell_{c} (t) & O \\
\hline
O & \id
\end{array}
\right]
\end{equation}
where $\tilde \ell_x$ is given by
$$
\tilde \ell_{c} (t) = \eta_{\phi_R\cdot c}(t) \circ d\phi_R^{t}(c(t))|_{\xi}\circ \eta_c(t)^{-1}, \quad t \in [0, T]
$$
with $(\phi_R \cdot c) (t) = \phi_R^t(c(t))$.
Applying the same argument as in the proof of Proposition \ref{prop: degreeshift}, we see that 
$$
\mu_{\RS}(\tilde \ell_c\Gamma_0, \Gamma_0) + \mu_{\RS}(\Phi_c \Gamma_0, \Gamma_0) = \mu_{\RS}(\Phi_{\phi_R \cdot c}\Gamma_0, \Gamma_0).
$$
 (In this case, the commuting condition \eqref{eq: assumeg2} obviously holds.) 
 
Now we put $c$ to be a principal Reeb chord. Then the path $\phi_R \cdot c$ is actually a Reeb chord of the (time-scaled) Reeb flow $\phi_R^{2t}$ of the same period as $c$. The Maslov index $\mu_{\RS}(\Phi_{\phi_R \cdot c}\Gamma_0, \Gamma_0)$ of $\phi_R \cdot c$ is therefore the same as the index of the second iteration of $c$, say $c^2$. Since the second iteration $c^2$ is exactly the same as the principal periodic Reeb orbit which has $c$ as the half Reeb chord, we know from \cite[Proposition 3.1]{KiKwLe18} that $\mu(c^2) = 2\mu(c)$, see Remark \ref{rmk: indrel}. It follows that
$$
\mu_{\RS}(\tilde \ell_c\Gamma_0, \Gamma_0) + \mu(c) = 2\mu(c)
$$
and hence $\mu_{\RS}(\tilde \ell_c\Gamma_0, \Gamma_0) = \mu(c)$. By the direct sum property of the Robbin--Salamon index applied to the splitting \eqref{eq: splittingofell}, we conclude that
$$
I(g) = \mu_{\RS}(\ell \Lambda_0, \Lambda_0) = \mu_{\RS}(\tilde \ell_c \Gamma_0, \Gamma_0) = \mu(c).
$$
This completes the proof.
\end{proof}

\subsection{V-shaped wrapped Floer homology using $g_*H$}\label{sec: shapeofg_*H} Let $H: \R_+ \times \Sigma \rightarrow \R$ be a v-shaped admissible Hamiltonian and $J$ an admissible almost complex structure. In this section, we show that the v-shaped wrapped Floer homology can be defined using a cofinal family of Floer data of the form $(g_*H, g_*J)$ even though $g_*H$ is not precisely v-shaped.

\subsubsection{Shape of $g_*H$} 

Recall that the generating Hamiltonian $K^g: \R_+ \times \Sigma \rightarrow \R$ of $g$, defined as \eqref{eq: defofgperiodic}, is given by $
K^g(r, y) = r + c$ for some constant $c$. Taking $c= -1$, the pushforward $g_*H = K^g + H$ has, up to smoothing, the following description, see Figure \ref{fig: vshapedHam} and \ref{fig: pushHam},
\begin{equation}\label{shapegH}
(g_*H)(r)  =  \begin{cases} (\mathfrak{a} + 1) r + \mathfrak{b}_1 -1 & r  \geq 1 \\ 0 & r=1 \\ (-\mathfrak{a} + 1) r + \mathfrak{b}_2 -1 & \delta \leq r \leq 1 \\
r + \mathfrak{c}-1 & r \leq \delta \end{cases} \; \text{where } H(r) = \begin{cases} \mathfrak{a} r + \mathfrak{b}_1 & r  \geq 1 \\ 0 & r= 1 \\ -\mathfrak{a} r + \mathfrak{b}_2 & \delta \leq r \leq 1 \\
\mathfrak{c} & r \leq \delta \end{cases}
\end{equation}
for some constants $\mathfrak{a}, \mathfrak{c} >0, \mathfrak{b}_1, \mathfrak{b}_2$ and sufficiently small $\delta > 0$.

\begin{figure}[htp]
\begin{subfigure}[b]{0.25\textwidth}
\centering
	\begin{overpic}[width=150pt,clip]{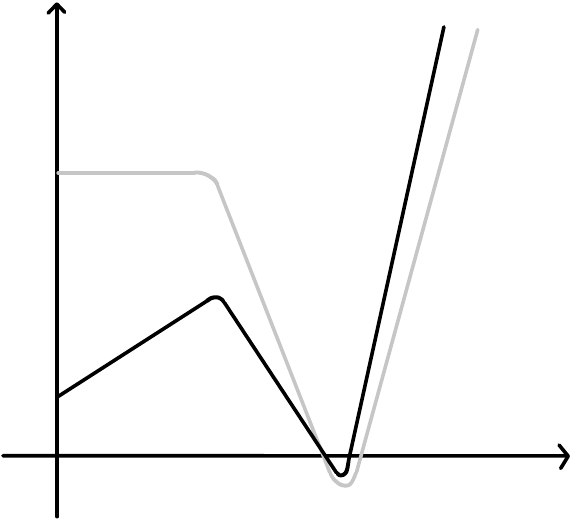} 
	\dashline{3.0}(55,18)(55,58)
	\put(145,5){$r$}
	\put(50,5){$\delta$}
	\put(110,70){$\mathfrak{a}+1$}
	\put(63,48){$\mathfrak{-a}+1$}
	\put(-8,30){$\mathfrak{c}-1$}
	\end{overpic}
	\caption{Shape of $g_*H$}
	\label{fig: pushHam}
	\end{subfigure}\hspace{10.0em}
	\begin{subfigure}[b]{0.25\textwidth}
\centering
	\begin{overpic}[width=150pt,clip]{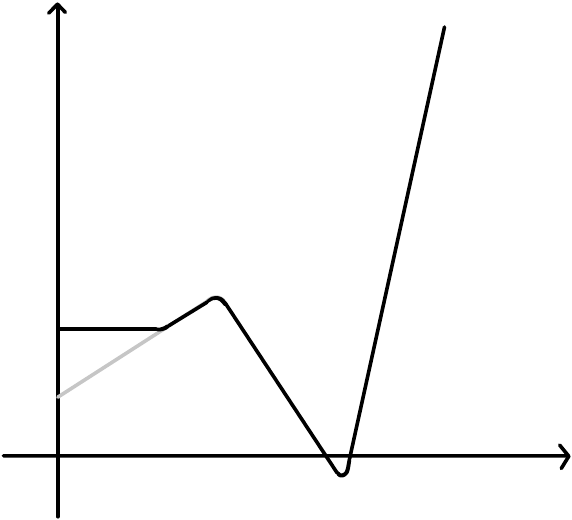} 
	\dashline{3.0}(40,18)(40,50)
	\dashline{3.0}(55,18)(55,58)
	\put(145,5){$r$}
	\put(53,5){$\delta$}
	\put(35,5){$R$}
	\end{overpic}
	\caption{Shape of $(g_*H)_R$}
	\label{fig: trunHam}
	\end{subfigure}
	\caption{}
\end{figure}

For a given action window $(a, b)$, we can take $\delta$ sufficiently small and $\mathfrak{c}$ sufficiently large so that the filtered chain group $\CF^{(a, b)}_*(L;H, J)$ is generated only by the Hamiltonian chords of $H$ in the region $(1-\epsilon, 1+\epsilon) \times \Sigma$ for sufficiently small $\epsilon >0 $. With the same choice of $\delta$ and $\mathfrak{c}$, we have the following lemma. For the sake of simplicity, we abbreviate ``in the region $(1-\epsilon, 1+\epsilon) \times \Sigma$ for sufficiently small $\epsilon >0 $" by ``near $r=1$''. 
\begin{lemma}\label{lem: actionwindowgH}
For the above choice of $\delta$ and $\mathfrak{c}$, the chain group $\CF^{(a+1, b+1)}_*(L;g_*H, g_*J)$ is generated by the Hamiltonian chords of $g_*H$ in the region near $r=1$.
\end{lemma}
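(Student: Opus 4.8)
The plan is to pull the whole filtered complex of $(g_*H, g_*J)$ back to that of $(H,J)$ through the bijection $x\mapsto g\cdot x$ of Lemma~\ref{lem: sendfloerstrip}(1), while keeping exact track of the action. Two structural observations make this work. Since $H(r,y)=h(r)$ depends only on $r$, every $x\in\PP_{\widehat L}(H)$ has the form $x(t)=(r_0,\phi_R^{h'(r_0)t}(y_0))$ with $y_0\in\mathcal{L}$ and $\phi_R^{h'(r_0)}(y_0)\in\mathcal{L}$, and then $g\cdot x(t)=(r_0,\phi_R^{(1+h'(r_0))t}(y_0))$ is exactly the corresponding Hamiltonian chord of $g_*H$, consistently with $(g_*H)(r)=h(r)+r-1$ and $(g_*H)'(r_0)=1+h'(r_0)$ in \eqref{shapegH}. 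Moreover $g_t(r,y)=(r,\phi_R^t(y))$ preserves the $\R_+$-coordinate, so $x$ lies in the region $(1-\epsilon,1+\epsilon)\times\Sigma$ near $r=1$ if and only if $g\cdot x$ does.

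Next I would compute the action shift. Because $\widehat\lambda|_{\widehat L}=r\alpha|_{\R_+\times\mathcal{L}}=0$ (the fiber direction $\p_r$ is annihilated by $\alpha$ and $\mathcal{L}$ is Legendrian), one may choose the primitive $g_{\widehat L}\equiv 0$, so the boundary terms in the action functional \eqref{eq: Hamactfunl} disappear for both $H$ and $g_*H$. Direct substitution then gives $\mathcal{A}_H(x)=r_0h'(r_0)-h(r_0)$ and, since $K^g=r-1$ is constant along $\{r=r_0\}$,
$$
\mathcal{A}_{g_*H}(g\cdot x)=r_0\bigl(1+h'(r_0)\bigr)-\bigl(h(r_0)+r_0-1\bigr)=r_0h'(r_0)-h(r_0)+1=\mathcal{A}_H(x)+1.
$$
In particular $\mathcal{A}_H(x)\in(a,b)$ if and only if $\mathcal{A}_{g_*H}(g\cdot x)\in(a+1,b+1)$.

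With these two facts the conclusion is immediate: by Lemma~\ref{lem: sendfloerstrip}(1) every generator of $\CF^{(a+1,b+1)}_*(L;g_*H,g_*J)$ is $g\cdot x$ for a unique $x\in\PP_{\widehat L}(H)$, and by the displayed identity this $x$ has $\mathcal{A}_H(x)\in(a,b)$; by the choice of $\delta$ and $\mathfrak{c}$ recalled immediately before the lemma, such an $x$ lies near $r=1$, and hence so does $g\cdot x$.

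The only step demanding care is the action bookkeeping — confirming that the primitive terms really cancel and that the constant $c=-1$ in $K^g$ produces a shift of exactly $+1$ (rather than some other additive constant). I do not expect a genuine obstacle: once the shift is pinned down, the statement is a formal consequence of the correspondence in Lemma~\ref{lem: sendfloerstrip} together with the fact that $g$ acts purely in the $\Sigma$-direction, leaving the $\R_+$-coordinate untouched.
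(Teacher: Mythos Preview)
Your proof is correct and follows the same route as the paper: vanish the primitive on $\widehat L=\R_+\times\mathcal{L}$, compute the action shift under $x\mapsto g\cdot x$, and conclude via the bijection of Lemma~\ref{lem: sendfloerstrip}(1) together with the fact that $g_t$ preserves the $r$-coordinate. Your computation is in fact a bit sharper than the paper's: you obtain $\mathcal{A}_{g_*H}(g\cdot x)=\mathcal{A}_H(x)+1$ exactly for every chord, whereas the paper writes the shift as $+1-\int_0^1 K^g(x(t))\,dt$ and then argues $K^g=r-1\approx 0$ near $r=1$ to get an approximate shift; the two reconcile because the extra Liouville term $\int_0^1 r_x(t)\,dt$ and $\int_0^1 K^g(x(t))\,dt=\int_0^1 (r_x(t)-1)\,dt$ cancel exactly.
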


\begin{proof}
Since $\widehat L = \R_+ \times \mathcal{L}$ and $\widehat \lda = r \alpha$, the restriction $\lda|_{\widehat L}$ vanishes. Therefore we may assume that the primitive $g_{L}: L \rightarrow \R$ is constant so that the $g_L$-involved terms in the action functional $\mathcal{A}_H$ do not appear. Then we can compute the action of $g \cdot x$ as follows.
\begin{align*}
\mathcal{A}_{g_*H}(g \cdot x)  &= \int_0^1 (g\cdot x)^* \widehat \lda  - \int_0^1 (g_*H)((g\cdot x)(t)) dt \\
&= \int_0^1 x^* \widehat \lda + 1 - \int_0^1 H(x(t)) dt - \int_0^1 K_t^g(x(t)) dt \\
&=\mathcal{A}_H(x) +1 -\int_0^1 K_t^g(x(t)) dt,
\end{align*}
Note that near $r = 1$, we have $K_g= r-1 \approx 0$. We conclude that
$$
\mathcal{A}_{g_*H}(g \cdot x) \approx \mathcal{A}_H(\gamma) +1.
$$
By the choice $\delta$ and $\mathfrak{c}$ with respect to the action window $(a, b)$, the generators $\CF_*(L;g_*H, g_*J)$ (which should come from the region near $r=1$) are in the action window $(a+1, b+1)$.
\end{proof}

\subsubsection{Being constant at the negative end} The shape of $g_*H$ in \eqref{shapegH} is almost v-shaped, but it is not constant at the negative end (i.e. the region in $\R_+\times \Sigma$ with $r \leq \delta$). Analogously to \cite[Section 3.4]{Ue19}, we modify $g_*H$ to make it constant near $r=0$ without changing the (filtered) chain complex $\CF_*(L;g_*H, g_*J)$. Notice that being constant at the negative end is necessary to perform the stretching-the-neck operation, and this is to show that the the v-shaped wrapped Floer homology is defined purely in the symplectization $\R_+ \times \Sigma$.

Let $(g_*H)_R: \R_+ \times \Sigma \rightarrow \R$ be the Hamiltonian which is the same as $g_*H$ in $(R, \infty) \times \Sigma$ and is constant in $(0, R) \times \Sigma$ for some $0<R  <\delta$, see Figure \ref{fig: trunHam}. Let $(a+1, b+1)$ be the action window as in Lemma \ref{lem: actionwindowgH}. Then since $(g_*H)_R$ is the same as $g_*H$ near $r=1$, the Hamiltonian chords of $(g_*H)_R$ in this region are identically the same as those of $g_*H$. Therefore the chain groups $\CF_*^{(a+1, b+1)}(L;g_*H, g_*J)$ and $\CF_*^{(a+1, b+1)}(L;(g_*H)_R, g_*J)$ have the same generators. 

Let $x_-$ and $x_+$ be two generators in $\CF_*^{(a+1, b+1)}(L;g_*H, g_*J) = \CF_*^{(a+1, b+1)}(L;(g_*H)_R, g_*J)$ with index difference by $1$. The proof of the following lemma is completely analogous to the one in \cite[Proposition 3.19]{Ue19}.

\begin{lemma}\label{lem: 1to1Rtruncated} 
Let $x_-,x_+ \in \mathcal{P}_L(H)$ be two Hamiltonian chords such that $|x_+| - |x_-|=1$.
If $R > 0$ is sufficiently small, then there is a bijection
$$
\mathcal{M}(x_-, x_+; g_*H, g_*J) \leftrightarrow \mathcal{M}(x_-, x_+; (g_*H)_R, g_*J).
$$
\end{lemma}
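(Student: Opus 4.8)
The plan is to follow \cite[Proposition 3.19]{Ue19}: the Hamiltonians $g_*H$ and $(g_*H)_R$ differ only in the region $\{r < \delta\}$, so it suffices to show that no Floer strip between the relevant generators ever enters the subregion where they disagree, once $R$ is small enough, and then to take the bijection to be the identity on maps. First I would record the bookkeeping. By construction $(g_*H)_R$ coincides with $g_*H$ on $\{r \ge R\}$, hence near $r = 1$, so the two Hamiltonians have the same $1$-chords there; by the choice of $\delta$ and $\mathfrak{c}$ behind Lemma \ref{lem: actionwindowgH} these are exactly the generators of $\CF^{(a+1, b+1)}_*(L; g_*H, g_*J)$, and one checks that the chords created in $\{r \le R\}$ have action outside $(a+1, b+1)$ for $\mathfrak{c}$ large, and that the interpolation on $\{R < r < \delta\}$ can be chosen with derivative avoiding $\widecheck{\Spec}(\Sigma, \alpha, \mathcal{L})$, so that no further chords appear. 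Thus both chain groups are generated by the same chords $x_-, x_+$.

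The heart of the argument is an a priori $C^0$-confinement, uniform over strips: there is $r_0 > 0$ such that every element of $\mathcal{M}(x_-, x_+; g_*H, g_*J)$ and of $\mathcal{M}(x_-, x_+; (g_*H)_R, g_*J)$ has image in $\{r \ge r_0\}$. At the positive end this is the standard maximum principle, applicable since $g_*H$ is linear of slope $\mathfrak{a} + 1 \notin \Spec(\Sigma, \alpha, \mathcal{L})$ on $\{r \ge 1\}$ and $g_*J = dg_t \circ J \circ dg_t^{-1}$ is again of contact type, because the maps $g_t(r, y) = (r, \phi_R^t(y))$ preserve $r$, the contact distribution, and the Reeb field. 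At the negative end, where $g_*H$ is linear of slope $1 \notin \widecheck{\Spec}(\Sigma, \alpha, \mathcal{L})$ on $\{r \le \delta\}$, I would run the no-escape/integrated maximum principle as in \cite[Proposition 2.9]{CiFrOa10} and \cite[Section 3.2.2]{Da18}: since $\widehat{\lda}|_{\widehat L} = r\,\alpha|_{\mathcal{L}} = 0$, Stokes' theorem applied to $u^{-1}(\{r \le C\})$ for a regular value $C \in (0, \delta)$ lying below the asymptotic values of $r \circ u$, together with the Floer equation, forces this preimage to be empty; for $(g_*H)_R$ the same computation applies on the linear piece, and on $\{r \le R\}$, where $(g_*H)_R$ is constant and the strip is $J$-holomorphic with boundary on $\R_+ \times \mathcal{L}$, the identical Stokes estimate applies. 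This produces the uniform lower bound $r_0$.

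Finally, choosing $R < r_0$, every strip counted by either moduli space lies in $\{r > R\}$, on which $g_*H$, $(g_*H)_R$ and the (equal) almost complex structures coincide; the two Floer equations thus have literally the same solutions with the prescribed asymptotics, so the identity map is the desired bijection, and it transports regularity and hence the mod-$2$ count in the case $|x_+| - |x_-| = 1$. I expect the negative-end confinement to be the main obstacle: it is a minimum-type $C^0$-estimate rather than the familiar maximum one, and the contribution of the Lagrangian boundary to the Stokes estimate must be controlled — this is exactly where exactness of $\widehat L$ with vanishing primitive, forced by $\mathcal{L}$ being Legendrian, is used.
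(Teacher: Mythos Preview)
There is a genuine gap in your no-escape argument at the negative end. The slope of $g_*H$ on $\{r\le\delta\}$ equals the slope of $K^g$, and this slope is precisely the $\mathcal{L}$-period of the Reeb flow: condition~\eqref{eq: assumptionong1} forces $\phi_R^1$ to preserve $\mathcal{L}$, which is exactly the statement that $1\in\widecheck{\Spec}(\Sigma,\alpha,\mathcal{L})$. So your hypothesis ``$1\notin\widecheck{\Spec}$'' is false by construction, and the integrated maximum principle at that slope does not yield the non-positivity of energy needed to force $u^{-1}(\{r\le C\})=\emptyset$. The same issue persists for $(g_*H)_R$: on $\{r\le R\}$ the equation is genuinely $J$-holomorphic, and in a symplectization $J$-holomorphic strips with Lagrangian boundary carry no a~priori lower bound on $r$---the Stokes computation gives $E(u|_\Omega)=C\int_{\{r\circ u=C\}} u^*\alpha$ with the integrand \emph{positive} for the outward boundary orientation, so no contradiction arises.

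The paper's route, following \cite[Proposition~3.19]{Ue19} and made explicit for continuation maps in the proof of Lemma~\ref{lem: commdiagcontinu}, is different and avoids this obstacle. For the forward inclusion one uses that $\mathcal{M}(x_-,x_+;g_*H,g_*J)$ is, via $u\mapsto g^{-1}\!\cdot u$, in bijection with the finite set $\mathcal{M}(x_-,x_+;H,J)$, whose elements lie in $\{r>r_0\}$ for some $r_0>0$ by Proposition~\ref{prop: groupindependence} (index-positivity plus neck-stretching) together with compactness of the zero-dimensional moduli space; since $g_t$ preserves $r$, one then takes $R<r_0$. For the reverse inclusion one again pulls back by $g^{-1}$: the Hamiltonian $g^*((g_*H)_R)$ coincides with $H$ and is therefore \emph{constant} on $(R,\delta)\times\Sigma$, so one can apply the stretching-the-neck argument there and invoke index-positivity to exclude any strip that crosses this region. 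Index-positivity is the essential input and is not replaced by a bare no-escape lemma; your argument never uses it, and that is where it breaks.
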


This implies that $\CF^{(a+1, b+1)}_*(L;g_*H, g_*J)$ is the same as $\CF^{(a+1, b+1)}_*(L;(g_*H)_R, g_*J)$ as a chain \emph{complex} for sufficiently small $R > 0$. Therefore their homology groups are isomorphic. Since we have an isomorphism 
$$
S_g : \HF_*^{(a, b)}(L;H, J) \rightarrow \HF_{*+I(g)}^{(a+1, b+1)}(L;g_*H, g_*J),
$$
we obtain an induced isomorphism (with the same notation)
$$
S_g : \HF_*^{(a, b)}(L;H, J) \rightarrow \HF_{*+I(g)}^{(a+1, b+1)}(L;(g_*H)_R, g_*J).
$$

\subsubsection{Taking direct limit} We shall take the direct limit of $\HF_{*}^{(a+1, b+1)}(L;(g_*H)_R, g_*J)$ over a cofinal family of $(g_*H)_R$'s. Note that if two admissible Hamiltonians $H_{-}$ and $H_{+}$ satisfy $H_{+} \leq H_{-}$ , then $(g_*H_+)_R \leq (g_*H_-)_R$ by the construction, so we can describe a family of $(g_*H)_R$'s in terms of $H$'s. We take a family of $H$'s by increasing the slope $\mathfrak{a}$ but keeping $\delta$ sufficiently small so that the Hamiltonian chords within the action window $(a, b)$ are all near $r= 1$. Then the induced family $(g_*H)_R$'s has a property that Hamiltonian chords within the action window $(a+1, b+1)$ are all near $r=1$. In particular, for two Hamiltonians $(g_*H_+)_R \leq (g_*H_-)_R$ in this family, we have a continuation map
$$
f_{H_+,H_{-}}: \HF_{*}^{(a+1, b+1)}(L;(g_*H_+)_R, g_*J_+) \rightarrow \HF_{*}^{(a+1, b+1)}(L;(g_*H_-)_R, g_*J_{-}) .
$$

\begin{lemma}\label{lem: commdiagcontinu}
Let $(H_{-},J_{-})$ and $(H_{+},J_{+})$ be pairs of a v-shaped admissible Hamiltonian and an admissible almost complex structure. Then the following diagram commutes.
\[ \begin{tikzcd}
\HF_*^{(a, b)}(L;H_+, J_+) \arrow{r}{S_g} \arrow[swap]{d}{f_{H_+,H_-}} & \HF_{*+I(g)}^{(a+1, b+1)}(L;(g_*H_+)_R, g_*J_+) \arrow{d}{f_{H_+,H_-}} \\%
\HF_*^{(a, b)}(L;H_-, J_{-}) \arrow{r}{S_g}& \HF_{*+I(g)}^{(a+1, b+1)}(L;(g_*H_-)_R, g_*J_{-})
\end{tikzcd}
\]
\end{lemma}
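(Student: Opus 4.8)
The plan is to establish the square at the chain level, exploiting that the horizontal maps $S_g$ are assembled from tautological bijections while the vertical continuation maps are $g$-equivariant. Recall from Section~\ref{sec: Seideloperator} and Section~\ref{sec: shapeofg_*H} that $S_g$ is the composite of the isomorphism $\HF_*^{(a, b)}(L;H_\pm, J_\pm) \to \HF_{*+I(g)}^{(a+1, b+1)}(L;g_*H_\pm, g_*J_\pm)$ induced by the correspondences $x \mapsto g\cdot x$, $u \mapsto g\cdot u$, followed by the identification $\HF_{*+I(g)}^{(a+1, b+1)}(L;g_*H_\pm, g_*J_\pm) \cong \HF_{*+I(g)}^{(a+1, b+1)}(L;(g_*H_\pm)_R, g_*J_\pm)$ of Lemma~\ref{lem: 1to1Rtruncated}; hence it suffices to check that each of these two identifications intertwines the relevant continuation maps and then compose. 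To start, push the interpolating data forward by the fixed path $g$: if $\{(H_s, J_s)\}_{s \in \R}$ is a monotone homotopy from $(H_-, J_-)$ to $(H_+, J_+)$ defining the left vertical map, then since the generating Hamiltonian $K^g$ of $g$ is independent of $s$ one has $\partial_s (g_*H_s)(t, p) = (\partial_s H_s)(t, g_t^{-1}(p)) \le 0$, so $\{(g_*H_s, g_*J_s)\}$ is again a monotone homotopy, now from $(g_*H_-, g_*J_-)$ to $(g_*H_+, g_*J_+)$; here one uses that $g_t(r, y) = (r, \phi_R^t(y))$ is cylindrical, so each $g_*J_s$ is admissible.

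Next I would run the correspondence $u \mapsto g\cdot u$ for parametrized Floer solutions, exactly as in Lemma~\ref{lem: sendfloerstrip}: it gives a bijection between $\MM(x_-, x_+; H_s, J_s)$ and $\MM(g\cdot x_-, g\cdot x_+; g_*H_s, g_*J_s)$, and by the action identity of Lemma~\ref{lem: actionwindowgH} it matches the action window $(a, b)$ on the source with $(a+1, b+1)$ on the target. Counting these moduli spaces modulo $2$ then shows that the isomorphism $x \mapsto g\cdot x$ intertwines $f_{H_+, H_-}$ on $\HF_*^{(a, b)}(L;H_\pm, J_\pm)$ with $f_{H_+, H_-}$ on $\HF_{*+I(g)}^{(a+1, b+1)}(L;g_*H_\pm, g_*J_\pm)$ defined by the pushed-forward homotopy. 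In other words, the square in the statement commutes already at the chain level once $(g_*H_\pm)_R$ is replaced by the untruncated $g_*H_\pm$.

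Finally I would transfer this across the truncation. Define the right vertical map using the truncated homotopy $\{((g_*H_s)_R, g_*J_s)\}$ (legitimate since continuation maps are independent of the homotopy up to chain homotopy), and invoke the parametrized analogue of Lemma~\ref{lem: 1to1Rtruncated}, i.e.\ of \cite[Proposition 3.19]{Ue19}: for $R$ sufficiently small the same $C^0$-estimates that force index-one Floer strips between generators (which lie near $r=1$) to remain in $\{r > R\}$ apply to parametrized solutions of the monotone homotopy $\{(g_*H_s)_R, g_*J_s\}$, yielding a bijection $\MM(x_-, x_+; g_*H_s, g_*J_s) \leftrightarrow \MM(x_-, x_+; (g_*H_s)_R, g_*J_s)$; hence $f_{H_+, H_-}$ computed for $((g_*H_\pm)_R, g_*J_\pm)$ agrees, under the identification of Lemma~\ref{lem: 1to1Rtruncated}, with $f_{H_+, H_-}$ computed for $(g_*H_\pm, g_*J_\pm)$, and composing with the previous paragraph gives the claim. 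The one genuinely delicate point — and the main obstacle — is that $R$ must be chosen small enough to work for the \emph{entire} family $\{g_*H_s\}_{s \in \R}$ rather than just its two endpoints; this causes no trouble because the homotopy is eventually constant in $s$ and the relevant estimates depend only on the fixed action window and on a uniform upper bound for the slopes $\mathfrak{a}$ occurring along the homotopy, so the argument of \cite[Proposition 3.19]{Ue19} carries over verbatim to the parametrized setting.
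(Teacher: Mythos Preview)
Your approach matches the paper's: push the homotopy forward by $g$, use the bijection $u\mapsto g\cdot u$ on parametrized solutions, and then identify the truncated and untruncated moduli spaces for the whole family via the argument of \cite[Proposition~3.19]{Ue19}. The only imprecision is in your description of that last step: the mechanism is not a $C^0$-estimate but rather neck-stretching plus index-positivity (applied to the pullback $g^{-1}\cdot u$, where the Hamiltonian is constant near the neck), and the uniformity in $s$ comes from first choosing $\delta>0$ so that every $H_s$ is constant on $(0,\delta)\times\Sigma$---possible since the homotopy is eventually constant---rather than from a slope bound.
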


\begin{proof}
Let $x_{-} \in \PP_{\widehat L}(H_{-})$ and $x_+ \in \PP_{\widehat L}(H_+)$ be Hamiltonian chords with  $|x_{+}| - |x_-|=0$. Choose a regular pair $\{(H_s,J_s)\}_{s\in \R}$ of monotone decreasing family of v-shaped admissible Hamiltonians and admissible almost complex structures such that 
$$
(H_s,J_s) =\begin{cases} (H_{-},J_{-}) & s < -M \\ (H_{+},J_+) &s>M \end{cases}
$$ 
for some large $M>0$. Since the set $\{s\in \R \;|\; \text{$H_s$ is strictly decreasing}\}$  is compact, we can choose a sufficiently small $\delta >0$ such that $H_s$ is constant on $(0,\delta) \times \Sigma$ for all $s \in \R$.

By Lemma \ref{lem: sendfloerstrip} the map $\mathcal{M}(x_{-}, x_+;H_s, J_s) \to \mathcal{M}(g\cdot x_{-}, g\cdot x_+; g_*H_s, g_*J_s)$, $u \mapsto g\cdot u$ is a one-to-one correspondence. In particular $\mathcal{M} (g\cdot x_-, g\cdot x_+; g_*H_s, g_*J_s)$ is compact, and due to the condition $|x_{+} | - |x_-| =0$, it has only finitely many elements, say $u_1, \dots, u_m$. 

Now, to prove the commutativity in the statement, it suffices to show that the moduli spaces $\mathcal{M} (g\cdot x_-, g\cdot x_+; g_*H_s, g_*J_s)$ and $\mathcal{M} (g\cdot x_-, g\cdot x_+; (g_*H_s)_R, g_*J_s)$ are the same. Since all curves in $\mathcal{M}(x_{-}, x_+; H_s, J_s)$ are contained in the symplectization, so are the curves in $\mathcal{M}(g\cdot x_{-}, g\cdot x_+; g_* H_s, g_*J_s)$. Therefore there is a sufficiently small $R >0$ such that the image of $u_j$ lies in $(R,\infty) \times \Sigma$ for all $j =1, \dots, m$. We use this number $R$ for the truncated function $(g_*H)_R$. Then for all $j$, the element $u_j$ actually belongs to $\mathcal{M}(x_-, x_+; (g_*H)_R, g_*J)$ since $(g_*H)_R$ and $g_* H$ coincide in the region $(R,\infty) \times \Sigma$.

For the converse inclusion, we follow the idea in \cite[Proof of Proposition 3.19]{Ue19}. Suppose that the latter moduli space $\mathcal{M}(x_-, x_+; (g_*H)_R, g_*J)$ has an extra element, say $u$. Then the image of $u$ escapes from $(R,\infty) \times \Sigma$. Applying $g^{-1}$ to $u$, we get an element
$$ g^{-1} \cdot u \in \mathcal{M}( x_{-}, x_+; g^*(g_* H_s)_R, g^*(g_*J_s) ),$$
which also escapes from $(R,\infty) \times \Sigma$ since the diffeomorphism $g_t$ preserves the radial coordinate for all $t$. Since $g^*(g_*H)_R = H$ is constant on $(R,\delta) \times \Sigma$, we can apply the stretching-the-neck argument to $g^{-1}u$ at the contact hypersurface $ \{ R \} \times \Sigma$ as in the proof of Proposition \ref{prop: groupindependence}. This leads to a contradiction due to the index-positivity assumption. This completes the proof.
\end{proof}

It is apparent that the direct limit of $\HF_{*}^{(a+1, b+1)}(L;(g_*H)_R, g_*J)$ as $\mathfrak{a} \rightarrow \infty$ is the filtered v-shaped wrapped Floer homology, i.e.
$$
\varinjlim_{\mathfrak{a}}\HF_{*}^{(a+1, b+1)}(L;(g_*H)_R, g_*J) = \widecheck{\HW}_*^{(a+1, b+1)}(L).
$$
This is because, within a fixed action window, the truncated Hamiltonians give the same chain complexes as the genuine v-shaped Hamiltonians. Now by the commutative diagram in Lemma \ref{lem: commdiagcontinu}, the operator $S_g$ induces an isomorphism to the direct limits
$$
S_g : \widecheck{\HW}_*^{(a, b)}(L) \rightarrow \widecheck{\HW}_{*+I(g)}^{(a+1, b+1)}(L).
$$
Taking $\displaystyle\varprojlim_{a}$ followed by $\displaystyle \varinjlim_{b}$, we finally have the following.
\begin{corollary}\label{cor: SgoperatorReebflow}
We have a graded group isomorphism
$$
S_g : \widecheck{\HW}_*(L) \rightarrow \widecheck{\HW}_{*+ I(g)}(L).
$$
\end{corollary}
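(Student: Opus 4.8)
The proof is essentially a bookkeeping argument: the analytic input has already been assembled, and what remains is to push the filtered isomorphisms through the limits that define $\widecheck{\HW}_*(L)$.

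First I would record, for a fixed action window $(a,b)$, the graded isomorphism
$$
S_g : \HF_*^{(a, b)}(L;H, J) \xrightarrow{\ \cong\ } \HF_{*+I(g)}^{(a+1, b+1)}(L;(g_*H)_R, g_*J),
$$
obtained by combining Lemma \ref{lem: sendfloerstrip} (the chord and strip correspondences $x\mapsto g\cdot x$, $u\mapsto g\cdot u$), Theorem \ref{thm: isomfloerhomSg} (the degree shift by $I(g)$), Lemma \ref{lem: actionwindowgH} (the action shifts by $+1$ under $g\cdot(-)$, so the target window is $(a+1,b+1)$), and Lemma \ref{lem: 1to1Rtruncated} (truncating $g_*H$ at $r=R$ does not change the chain complex). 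Here $H$ ranges over v-shaped admissible Hamiltonians of slope $\mathfrak{a}$ large relative to $(a,b)$, with $\delta$ and $\mathfrak{c}$ chosen as in Lemma \ref{lem: actionwindowgH} and $R>0$ sufficiently small.

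Next I would pass to the direct limit over $\mathfrak{a}\to\infty$. Since truncation at $r=R<\delta$ affects neither the generators nor, by Lemma \ref{lem: 1to1Rtruncated}, the differential inside a fixed window, the family $\{(g_*H)_R\}$ is cofinal and its direct limit is $\widecheck{\HW}_*^{(a+1,b+1)}(L)$; by Lemma \ref{lem: commdiagcontinu} the maps $S_g$ commute with the continuation maps on both sides, i.e. they define an isomorphism of direct systems, and therefore descend to a graded isomorphism
$$
S_g : \widecheck{\HW}_*^{(a, b)}(L) \xrightarrow{\ \cong\ } \widecheck{\HW}_{*+I(g)}^{(a+1, b+1)}(L).
$$
Finally I would take $\varprojlim_a$ and then $\varinjlim_b$. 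Because $S_g$ is built chord-by-chord from the action-compatible bijections of Lemma \ref{lem: sendfloerstrip}, it commutes with the inclusion-induced maps $\CF^{<c}\to\CF^{<c'}$ and with the quotient maps defining $\CF^{(a,b)}$, hence with the maps used to form both limits. Taking $\varprojlim_a$ turns the windows $(a,b),(a+1,b+1)$ into $(-\infty,b),(-\infty,b+1)$, and the subsequent $\varinjlim_b$ absorbs the shift $b\mapsto b+1$ by reindexing, so both sides become the full v-shaped wrapped Floer homology and the asserted graded group isomorphism $S_g:\widecheck{\HW}_*(L)\to\widecheck{\HW}_{*+I(g)}(L)$ follows.

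The only real subtlety — the step I would treat most carefully — is the interplay between $\varprojlim_a$ and $\varinjlim_{\mathfrak{a}}$: one must keep the constants $\delta,\mathfrak{c}$ (and then $R$) chosen so that, uniformly over the cofinal family, the shifted window $(a+1,b+1)$ picks out exactly the chords near $r=1$ corresponding under $g\cdot(-)$ to the generators of $\CF_*^{(a,b)}(L;H,J)$. This is precisely what Lemmas \ref{lem: actionwindowgH} and \ref{lem: 1to1Rtruncated} and the discussion preceding them are arranged to guarantee, so the argument goes through with no extra hypotheses.
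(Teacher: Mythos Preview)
Your proposal is correct and follows essentially the same route as the paper: establish the filtered isomorphism $S_g$ on $\HF_*^{(a,b)}$ via Lemmas \ref{lem: sendfloerstrip}, \ref{lem: actionwindowgH}, \ref{lem: 1to1Rtruncated} and Theorem \ref{thm: isomfloerhomSg}, invoke Lemma \ref{lem: commdiagcontinu} to pass to the direct limit over slopes, and then take $\varprojlim_a$ followed by $\varinjlim_b$. If anything, you are more explicit than the paper about why the action-window shift $(a,b)\mapsto(a+1,b+1)$ is absorbed in the final limits and about the uniformity of the choices of $\delta,\mathfrak{c},R$.
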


\subsubsection{More general case}\label{sec: moregeneralcase} In the above, we have defined $\widecheck{\HW}_*(L)$ using Hamiltonians of the form $g_*H$ where $g$ is given by the Reeb flow as \eqref{eq: defofgperiodic}. The same idea basically applies to a bit more general case of $g$ given by
\begin{equation}\label{eq: generalg_tperiodic}
g_t(r, y) = (r, \phi_R^{f(t)}(y))
\end{equation}
for some smooth function $f : [0,1] \to \R$ satisfying $ f(0), f(1) \in T_P \Z$ where $T_P$ is the principal period of Reeb chords (Definition \ref{def: principalperiod}). Note that $g$ in \eqref{eq: generalg_tperiodic} satisfies the conditions \eqref{eq: assumptionong1} and \eqref{eq: assumeg2} for such a function $f$.


A technical difference is that the pushforward $g_*H$ may have a time-\emph{dependent} slope. For this reason, we homotope the given path $g$ to another path $\tilde{g}$ such that the pushforward $\tilde{g}_*H$ still has time-\emph{independent} slope. This in turn provides a cofinal family for the direct limit of $\HF_*(L;\tilde{g}_*H, \tilde{g}_*J)$'s. The following is a straightforward adaptation of \cite[Lemma 3.18]{Ue19}.

\begin{lemma}\label{lem: homotopy} 
Let $g_1$ and $g_2$ be paths of Hamiltonian diffeomorphisms which are homotopic relative to the end points. Let $H_1$ and $H_2$ be v-shaped admissible Hamiltonians such that the slope of $H_2$ is steeper than the slope of $H_1$. Let $J_1$ and $J_2$ be regular almost complex structures. Then there exists a continuation map
$$ \HF(L;g_{1*}H_1,g_{1*}J_1) \to \HF(L;g_{2*}H_2,g_{2*}J_2).$$
\end{lemma}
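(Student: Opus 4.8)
The plan is to follow the construction of the continuation map of Section~\ref{sec: continuationamap}, interpolating \emph{simultaneously} between the paths $g_1,g_2$ and between the Hamiltonians $H_1,H_2$. The first move is a normalisation. Since $g_1$ and $g_2$ are homotopic rel end points, a preliminary homotopy lets us replace each $g_i$ by a path of the form $g_t(r,y)=(r,\phi_R^{f(t)}(y))$ with $f$ affine; this does not change any of the Floer homologies in the statement, because $\CF(L;g_{i*}H_i,g_{i*}J_i)$ is canonically identified with $\CF(L;H_i,J_i)$ by the correspondence of Lemma~\ref{lem: sendfloerstrip}, and this identification depends only on the rel-end-points homotopy class of $g_i$ (equivalently, on the homotopy-invariant index $I(g_i)$). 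Since the normalised form is determined by the common end points of $g_1$ and $g_2$, we may in fact take $g_1=g_2=:g$ after this reduction; its generating Hamiltonian $K^g$ then has constant $r$-slope, so that $g_*H_i$ has, for $r\gg 1$, radial slope $\mathfrak a_i+\mathrm{const}$ independent of $t$, with $\mathfrak a_1<\mathfrak a_2$ because $H_2$ is steeper than $H_1$. Now I would choose a homotopy $\{H^\sigma\}_{\sigma\in[0,1]}$ of v-shaped admissible Hamiltonians from $H_1$ to $H_2$ with $r$-slope non-decreasing in $\sigma$ (together with a homotopy $\{J^\sigma\}$ of admissible almost complex structures), cut it off by $\sigma=\sigma(s)$ with $\sigma\equiv 1$ for $s\le -R$ and $\sigma\equiv 0$ for $s\ge R$, and define the continuation map by counting rigid solutions of the $s$-dependent Floer equation for the data $\bigl(g_*(H^{\sigma(s)}),\,g_*(J^{\sigma(s)})\bigr)$ with boundary on $\widehat L$, exactly as in Section~\ref{sec: continuationamap}.

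The step I expect to be the main obstacle is the a priori $C^0$ bound that makes these $s$-dependent moduli spaces compact. Escape to $r=+\infty$ is ruled out by the maximum principle: after normalisation $g_*(H^{\sigma(s)})$ is, for $r\gg 1$, of the model shape $h_s(r)+\mathrm{const}\cdot r+c(t)$ with $\partial_s h_s'(r)\le 0$, since the slope $\mathfrak a_{\sigma(s)}$ decreases from $\mathfrak a_2$ at $s=-\infty$ to $\mathfrak a_1$ at $s=+\infty$, and the $t$-dependent summand $c(t)$ does not obstruct the maximum principle. It is precisely to make the $r$-slope $t$-independent that the normalisation of the $g_i$ is needed: without it, $K^{g_i}$ would contribute a $t$-dependent slope $f'(t)$, and the maximum principle for a simultaneously $s$- and $t$-dependent Hamiltonian would become delicate — this is the point I expect to require the most care. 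Escape to $r=0$ I would handle exactly as in the earlier sections, working with the truncated Hamiltonians $(g_*H^{\sigma(s)})_R$, which are constant near $r=0$ (cf. Lemma~\ref{lem: 1to1Rtruncated} and \eqref{shapegH}); then no curve can reach the neck $\{R\}\times\Sigma$, by the neck-stretching and relative SFT compactness argument of Proposition~\ref{prop: groupindependence} together with product-index-positivity, in complete analogy with \cite[Proposition~3.19]{Ue19}. Together with the usual action/energy estimate this yields the required compactness.

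Granting the $C^0$ bound the rest is routine: for generic data the moduli spaces are transversally cut out of the expected dimension, the count of rigid elements is a chain map by the standard analysis of the ends of the one-dimensional moduli spaces (breaking into an $s$-dependent strip capped by a $t$-independent Floer trajectory, whose compactness again uses the monotonicity $\partial_s h_s'\le 0$), and passing to homology produces the asserted map $\HF(L;g_{1*}H_1,g_{1*}J_1)\to\HF(L;g_{2*}H_2,g_{2*}J_2)$. Independence of the choices of homotopy and cut-off follows from the usual homotopy-of-homotopies argument, which is also what makes the subsequent direct limit over increasing slopes well defined and independent of the choice of normalisation $g$. As a consistency check, under the canonical identifications $\CF(L;g_{i*}H_i,g_{i*}J_i)\cong\CF(L;H_i,J_i)$ this map corresponds to the ordinary continuation map $\HF(L;H_1,J_1)\to\HF(L;H_2,J_2)$. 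This is the straightforward adaptation of \cite[Lemma~3.18]{Ue19} referred to above.
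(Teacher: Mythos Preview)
The paper does not supply its own proof of this lemma; it merely records that the statement is a straightforward adaptation of \cite[Lemma~3.18]{Ue19}. Your proposal is therefore being measured against the expected content of that adaptation rather than against an explicit argument in the text.

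Your argument is essentially correct and isolates the genuine issue: the $t$-dependence of the slope of $g_{i*}H_i$ at infinity, which is removed by passing to the affine normalisation $\tilde g$ of the paths $g_i$. One point deserves to be stated more sharply. Your ``preliminary homotopy'' replacing $g_i$ by $\tilde g$ is \emph{not} itself an $s$-dependent continuation map --- that would be circular, since it is precisely the $t$-dependent-slope maximum principle you are trying to avoid --- but rather the chain-level isomorphism $S_{\tilde g}\circ S_{g_i}^{-1}$ furnished by Lemma~\ref{lem: sendfloerstrip}. Your phrasing (``does not change any of the Floer homologies'') leaves this slightly implicit. Once this is granted, your entire construction collapses to the composite
\[
S_{g_2}\circ f_{H_1,H_2}\circ S_{g_1}^{-1}\colon \HF(L;g_{1*}H_1,g_{1*}J_1)\longrightarrow \HF(L;g_{2*}H_2,g_{2*}J_2),
\]
which is exactly your consistency check at the end. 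One could simply take this composite as the \emph{definition} of the desired map and bypass the middle $s$-dependent construction altogether; compatibility with the direct system then follows from Lemma~\ref{lem: commdiagcontinu}, and the homotopy hypothesis on $g_1,g_2$ enters only to ensure $I(g_1)=I(g_2)$ so that the gradings match. This is arguably the more direct reading of the Uebele adaptation. Your route --- building an honest continuation map for the normalised data $(\tilde g_*H^{\sigma(s)},\tilde g_*J^{\sigma(s)})$ and then verifying it realises the same map --- is a valid and more geometric alternative, and your treatment of the maximum principle and of the truncation near $r=0$ is correct.
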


Suppose we are given a path of Hamiltonian diffeomorphisms 
$ g_t(r, y) = (r, \phi_R^{f(t)}(y)) $
for some smooth function $f : [0,1] \to \R$ satisfying $ f(0),f(1) \in T_P \Z$. Then the Hamiltonian $K^g$ generating $g$ is given by
$$ K^g(t,r,y) = f'(t) r.$$ We observe that the path $g$ is homotopic (relative to the end points) to a path $\tilde{g}$ defined by
$$ \tilde{g}_t (r,y) = (r, \phi_R^{(f(1)-f(0))t+f(0)}(y)).$$
A generating Hamiltonian $K^{\tilde{g}}$ of $\tilde{g}$ is given by
$$ K^{\tilde{g}} (t,r,y) = (f(1) - f(0)) r.$$
Therefore the pushforward $\tilde{g}_* H$ is time-independent for any time-independent Hamiltonian $H$.

Let $\mathfrak{a} < \mathfrak{b}$ be given. By Lemma \ref{lem: homotopy}, we have a map
$$ \HF(L; g_* {H_{\mathfrak{a}}}, g_* J_{\mathfrak{a}}) \to \HF( L;\tilde{g}_*{ H_{\mathfrak{b}} },\tilde{g}_*{ J_{\mathfrak{b}} }),$$
where $H_{\mathfrak{a}}$ is a v-shaped Hamiltonians described in \eqref{shapegH} and $J_{\mathfrak{a}}$, $J_{\mathfrak{b}}$ are regular almost complex structures.
Taking the direct limit as $\mathfrak{a}, \mathfrak{b} \to \infty$, we get a map 
\begin{equation}\label{eq: onedirection} \varinjlim_{\mathfrak{a}}\HF(L;g_*H_{\mathfrak{a}},  g_* J_{\mathfrak{a}} ) \to \varinjlim_{\mathfrak{b}}\HF(L;\tilde{g}_* H_{\mathfrak{b}},\tilde{g}_*{ J_{\mathfrak{b}} }). \end{equation}
Changing the role of $g$ and $\tilde{g}$ in the above, we get the inverse map of (\ref{eq: onedirection}), namely
\begin{equation}\label{eq: theotherdirection} \varinjlim_{\mathfrak{b}}\HF(L;\tilde{g}_* H_{\mathfrak{b}}, \tilde{g}_*{ J_{\mathfrak{b}} }) \to \varinjlim_{\mathfrak{a}}\HF(L;g_*H_{\mathfrak{a}},g_* J_{\mathfrak{a}}), \end{equation} 
by which we conclude the following.




\begin{proposition}
There is an isomorphism
$$ \varinjlim_{\mathfrak{a}}\HF(L;g_*H_{\mathfrak{a}},g_* J_{\mathfrak{a}}) \cong \varinjlim_{\mathfrak{b}}\HF(L;\tilde{g}_* H_{\mathfrak{b}},\tilde{g}_*{ J_{\mathfrak{b}}}).$$
\end{proposition}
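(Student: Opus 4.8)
The plan is to show that the two maps \eqref{eq: onedirection} and \eqref{eq: theotherdirection} are mutually inverse; granting this, each is an isomorphism and the proposition follows at once. Write $A := \varinjlim_{\mathfrak{a}}\HF(L;g_*H_{\mathfrak{a}},g_*J_{\mathfrak{a}})$ and $B := \varinjlim_{\mathfrak{b}}\HF(L;\tilde{g}_*H_{\mathfrak{b}},\tilde{g}_*J_{\mathfrak{b}})$, and denote by $\Phi : A \to B$ the map \eqref{eq: onedirection} and by $\Psi : B \to A$ the map \eqref{eq: theotherdirection}. Both are obtained by applying Lemma \ref{lem: homotopy} and passing to the direct limit over the slope; the lemma is available in both directions precisely because $g$ and $\tilde{g}$ are homotopic relative to their end points.

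First I would track what $\Psi \circ \Phi$ does to a representative cycle living at slope level $\mathfrak{a}$ in the $g$-family. By construction, $\Phi$ carries it to slope level $\mathfrak{b}>\mathfrak{a}$ in the $\tilde{g}$-family via the continuation map of Lemma \ref{lem: homotopy}, which counts solutions of a parametrized Floer equation for a homotopy of Floer data that simultaneously deforms $g$ to $\tilde{g}$ (rel end points) and raises the slope from $\mathfrak{a}$ to $\mathfrak{b}$; then $\Psi$ carries it further to slope level $\mathfrak{c}>\mathfrak{b}$ in the $g$-family via the analogous homotopy running from $\tilde{g}$ back to $g$ and from slope $\mathfrak{b}$ to $\mathfrak{c}$. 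Concatenating these two homotopies of Floer data yields a homotopy from $(g_*H_{\mathfrak{a}},g_*J_{\mathfrak{a}})$ to $(g_*H_{\mathfrak{c}},g_*J_{\mathfrak{c}})$ along which the slope is non-decreasing; by the standard gluing argument for continuation maps in Floer theory, the chain map induced by this concatenation is chain homotopic to the composition of the two chain-level maps, and — being independent of the homotopy on homology, as usual — it is the structure map of the $g$-directed system from level $\mathfrak{a}$ to level $\mathfrak{c}$. Passing to the direct limit, $\Psi \circ \Phi$ becomes the identity on $A$. Interchanging the roles of $g$ and $\tilde{g}$ gives $\Phi \circ \Psi = \id_B$ in the same way, so $\Phi$ and $\Psi$ are inverse isomorphisms. (The gradings match throughout since $I(g)=I(\tilde{g})$, the two paths being homotopic rel end points.)

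The delicate point is the concatenation step: one must choose the Floer data realizing $\Phi$ and $\Psi$ so that their concatenation is again an admissible homotopy. Concretely, the interpolating pushforward Hamiltonians should stay of the almost v-shaped type with non-decreasing, eventually time-independent slope described in \eqref{shapegH}, so that the maximum principle and the $C^0$-bounds underlying Lemma \ref{lem: homotopy} continue to hold; and the parameters $\delta$, $\mathfrak{c}$, together with the truncation radius $R$ from Section \ref{sec: shapeofg_*H}, should be chosen uniformly along the whole concatenation so that the relevant action windows are respected at every stage. Once this bookkeeping is in place — it is essentially the same as in \cite[Section 3.4]{Ue19} — the identification of the composed chain map with a single continuation map, and hence the whole argument, is routine.
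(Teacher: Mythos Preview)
Your proposal is correct and follows essentially the same approach as the paper: construct maps in both directions via Lemma \ref{lem: homotopy} and argue that their compositions agree with the structure maps of the respective direct systems, hence become identities in the limit. The paper simply asserts that \eqref{eq: theotherdirection} is the inverse of \eqref{eq: onedirection} without spelling out the concatenation/homotopy-invariance argument; your write-up supplies exactly those standard details.
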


Furthermore, since $\tilde{g}_* H_{\mathfrak{b}}$ is time-independent for all $\mathfrak{b}$, the argument in the previous section shows that the latter one $\displaystyle{\varinjlim_{\mathfrak{b}}\HF(L;\tilde{g}_* H_{\mathfrak{b}}, \tilde{g}_* J_{\mathfrak{b}})}$ is isomorphic to the v-shaped wrapped Floer homology $\widecheck{\HW}_*(L)$. We finally have the following.

\begin{theorem}\label{thm: Sgoperatoringeneral}
Let $g$ be a path of Hamiltonian diffeomorphisms on $\R_+ \times \Sigma$ of the form \eqref{eq: generalg_tperiodic}. Then there is an associated graded group isomorphism
$$
S_g : \widecheck{\HW}_*(L) \rightarrow \widecheck{\HW}_{*+ I(g)}(L).
$$
\end{theorem}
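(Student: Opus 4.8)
The plan is to deduce the general case from Corollary \ref{cor: SgoperatorReebflow} by pushing the homotopy $g \simeq \tilde g$ of Section \ref{sec: moregeneralcase} through the entire construction, so that the only genuinely new work is the bookkeeping of action windows and of the index shift $I(g)$.

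First, for a fixed v-shaped admissible pair $(H,J)$, Theorem \ref{thm: isomfloerhomSg} already gives a graded isomorphism $S_g : \HF_*^{(a,b)}(L;H,J) \to \HF_{*+I(g)}^{(a',b')}(L;g_*H,g_*J)$, where $I(g)$ is well-defined because the hypotheses \eqref{eq: assumptionong1} and \eqref{eq: assumeg2} hold for a path $g$ of the form \eqref{eq: generalg_tperiodic} precisely when $f(0),f(1)\in T_P\Z$ (Definition \ref{def: principalperiod}), and where $(a',b')$ is an explicit shift of $(a,b)$ by a constant depending on $f$, obtained from the action computation of Lemma \ref{lem: actionwindowgH} with $K^g(t,r,y)=f'(t)r$ (normalised to vanish near $r=1$). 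The obstruction to concluding at once is that $g_*H=H+K^g$ has a \emph{time-dependent} slope, so $\{g_*H\}$ is neither a cofinal family of the type used to build $\widecheck{\HW}_*(L)$ nor amenable to the neck-stretching of Section \ref{sec: shapeofg_*H}. To get around this I would homotope $g$ rel endpoints to $\tilde g_t(r,y)=(r,\phi_R^{(f(1)-f(0))t+f(0)}(y))$, whose generating Hamiltonian is $(f(1)-f(0))(r-1)$, so that $\tilde g_*H$ is time-independent for time-independent $H$; Lemma \ref{lem: homotopy} then produces continuation maps between the $g$- and $\tilde g$-families in both directions, and since each round-trip composite is a slope-increasing continuation map it becomes the identity in the direct limit, so the two are mutually inverse there. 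This is the unlabelled Proposition recorded just above.

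Next I would transplant Section \ref{sec: shapeofg_*H} verbatim to $\tilde g$: as $\tilde g_*H$ is time-independent, its truncation $(\tilde g_*H)_R$ (constant near $r=0$) has the same filtered chain complex near $r=1$ by Lemma \ref{lem: 1to1Rtruncated}, is compatible with continuation maps by Lemma \ref{lem: commdiagcontinu} --- whose only inputs, product-index-positivity (as in Corollary \ref{cor: ringindependence}) and neck-stretching, are available here --- and hence $\varinjlim_{\mathfrak a}\HF^{(a',b')}(L;(\tilde g_*H)_R,\tilde g_*J)\cong\widecheck{\HW}_*^{(a',b')}(L)$. Composing $S_g$ with the homotopy isomorphism $g\simeq\tilde g$ and this identification gives a graded isomorphism $\widecheck{\HW}_*^{(a,b)}(L)\to\widecheck{\HW}_{*+I(g)}^{(a',b')}(L)$ which commutes with the continuation maps in $(a,b)$; passing first to $\varprojlim_a$ and then to $\varinjlim_b$ (again following the previous section) yields the asserted isomorphism $S_g:\widecheck{\HW}_*(L)\to\widecheck{\HW}_{*+I(g)}(L)$. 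It is an isomorphism because it is one on every finite window and the transition maps involved are isomorphisms already at the chain level, so no $\varprojlim^1$ term intervenes.

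The main obstacle I anticipate is checking that all of the squares --- relating $S_g$, the $g\simeq\tilde g$ continuation maps, and the slope-increasing continuation maps --- commute with exactly the expected action-filtration shifts, and that the homotopy-induced maps are truly inverse to one another after the limit. Each of these is a careful but routine adaptation of \cite{Ue19} together with Lemmas \ref{lem: commdiagcontinu} and \ref{lem: homotopy}; no new idea beyond tracking slopes and filtration levels is required.
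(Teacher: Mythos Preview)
Your proposal is correct and follows essentially the same route as the paper: homotope $g$ rel endpoints to the path $\tilde g$ with time-independent generating Hamiltonian, invoke Lemma~\ref{lem: homotopy} in both directions to identify the direct limits, and then apply the truncation arguments of Section~\ref{sec: shapeofg_*H} to $\tilde g_*H$ exactly as was done for the special case~\eqref{eq: defofgperiodic}. You are somewhat more explicit than the paper about tracking action windows and about the $\varprojlim^1$ non-issue, but the underlying strategy is identical; one minor point is that for Lemma~\ref{lem: commdiagcontinu} only index-positivity (Definition~\ref{def: indexpositivity}) is needed, not the stronger product-index-positivity you cite.
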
 

\begin{remark}\label{rmk: notextend}
If the path of Hamiltonian diffeomorphism of the form $\id \times \phi_R^{f(t)}$ on $\R_+ \times \Sigma$ extends to a path of Hamiltonian diffeomorphisms $g'$ on the completion $\widehat W$, then the wrapped Floer homology $\HW_*(L)$ must be finite dimensional. Roughly speaking, this is because taking $g'_*$ to admissible Hamiltonians corresponds to increasing the slope and hence corresponds to taking the direct limit in the definition of the wrapped Floer homology.
In our examples, we already know that $\HW_*(L)$ is infinite dimensional; see Proposition \ref{prop: groupcompu}. As suggested in \cite{Ue19}, we therefore work with v-shaped wrapped Floer homology instead of working directly with the ordinary wrapped Floer homology. 
\end{remark}

\subsection{Module property}\label{sec: productproperty} The following is essentially the same as \cite[Section 3.2]{HuLa10} adapted to open symplectic manifolds as in \cite[Proposition 3.26]{Ue19}.

\begin{lemma}[Homotopy invariance]\label{lem: homotopyproperty}
Let $g$ and $h$ be paths of Hamiltonian diffeomorphisms of the form \eqref{eq: generalg_tperiodic} with the same end points. Suppose that $g$ and $h$ are homotopic relative to the end points. Then $S_g: \widecheck{\HW}_*(L) \rightarrow \widecheck{\HW}_{* + I(g)}(L)$ and $S_h: \widecheck{\HW}_*(L) \rightarrow \widecheck{\HW}_{* + I(h)}(L)$ coincide. 
\end{lemma}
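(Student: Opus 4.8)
The plan is to mimic the standard homotopy-invariance argument for the Seidel representation, as in \cite[Section 3.2]{HuLa10} and \cite[Proposition 3.26]{Ue19}, adapted to the v-shaped setting. Since both $S_g$ and $S_h$ are defined on the direct/inverse limits that compute $\widecheck{\HW}_*(L)$, it suffices (by the compatibility with continuation maps established in Lemma \ref{lem: commdiagcontinu} and the cofinality discussion in Section \ref{sec: moregeneralcase}) to exhibit a chain-level homotopy between the two Seidel chain maps for a fixed pair $(H,J)$, in a fixed action window, once we push the slopes high enough. First I would fix a homotopy $\{g^s\}_{s \in [0,1]}$ from $g^0 = g$ to $g^1 = h$ through paths of Hamiltonian diffeomorphisms of the form \eqref{eq: generalg_tperiodic} with the common end points $g^s_0 = g_0$, $g^s_1 = g_1$; note that the condition $f(0), f(1) \in T_P\Z$ and the assumptions \eqref{eq: assumptionong1}, \eqref{eq: assumeg2} persist along the homotopy, so each $S_{g^s}$ is defined and, crucially, the degree shift satisfies $I(g^s) \equiv I(g)$ by the homotopy invariance of the Maslov index (the Lemma preceding this statement shows $I_x(g)$ depends only on the homotopy class of the loop of Lagrangians $\ell_x\Lambda_0$). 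In particular $I(g) = I(h)$, so both maps land in the same graded group.

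Next I would set up the parametrized moduli space. For generic $s$, the correspondence $u \mapsto g^s \cdot u$ of Lemma \ref{lem: sendfloerstrip} identifies Floer strips for $(H,J)$ with those for $(g^s_*H, g^s_*J)$; applying this fiberwise over $[0,1]$ and counting the one-dimensional components of the resulting parametrized moduli space $\bigcup_{s}\mathcal{M}(g^s\cdot x_-, g^s\cdot x_+; g^s_*H, g^s_*J)$ with $|x_+| = |x_-|$ defines a chain homotopy operator $K : \CF_*(L;H,J) \to \CF_{*+I(g)+1}(L;h_*H, h_*J)$ (after identifying the target complexes for $g$ and $h$ via a continuation map, or equivalently working after passing to the limit). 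Breaking the boundary of this parametrized moduli space in the usual way — interior strip-breaking contributions give $\partial K + K \partial$, and the two $s$-endpoints give $S_h - S_g$ — yields the identity $S_h - S_g = \partial K + K\partial$ on the chain level, hence equality on homology. I would also invoke the independence-of-filling machinery (Proposition \ref{prop: groupindependence} and its parametrized analogue) to guarantee that all relevant curves stay in the symplectization $\R_+\times\Sigma$, so that the neck-stretching/compactness arguments go through exactly as in Section \ref{sec: indfill}; the product-index-positivity of Definition \ref{def: prodindpos} is what rules out the unwanted breakings into the filling.

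The main obstacle I expect is transversality and compactness for the parametrized moduli space when the slope of $g^s_*H$ varies with $s$: along a general homotopy $g^s$ the pushforward Hamiltonians need not have time-independent slope, and the slope can even pass through the (forbidden) spectrum, which would destroy the a priori $C^0$-bounds. The remedy, following the device used in Section \ref{sec: moregeneralcase} and \cite[Lemma 3.18]{Ue19}, is to first homotope $g$ and $h$ individually to the ``linear-in-$t$'' representatives $\tilde g$, $\tilde h$ for which the pushforwards have time-independent slope, reducing to a homotopy between those; one then arranges the homotopy of paths so that the induced family of Hamiltonians has slopes bounded away from $\Spec(\Sigma,\alpha,\mathcal{L})$ and is monotone in the auxiliary parameter where needed, giving the required energy and $C^0$ estimates and hence SFT-type compactness. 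With those bounds in hand the remaining steps — gluing, orientation/$\Z_2$-count bookkeeping, and checking that $K$ commutes with the continuation and limit maps so that it descends to $\widecheck{\HW}_*(L)$ — are routine adaptations of the closed-string case, and the conclusion $S_g = S_h$ follows.
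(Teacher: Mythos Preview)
Your proposal is correct and follows essentially the same approach as the paper, which does not give an independent proof but simply refers to \cite[Section 3.2]{HuLa10} and \cite[Proposition 3.26]{Ue19}; your sketch is precisely an unpacking of those arguments in the present v-shaped setting, including the reduction to linear-in-$t$ representatives via Lemma \ref{lem: homotopy}.
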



We want to prove the following module property of the operator $S_g$ which is crucial for computing the ring structure. Note, in addition to the starting point, that the end point $g_1$ also commutes with v-shaped admissible Hamiltonians.

\begin{theorem}\label{thm: prodproperty}
The operator $S_g : \widecheck{\HW}_*(L) \rightarrow \widecheck{\HW}_{*+ I(g)}(L)$ satisfies
$$
S_g(x \cdot y) = S_{g_0}(x) \cdot S_g(y) = S_g(x) \cdot S_{g_1}(y)
$$
where $x, y \in \widecheck{\HW}_*(L)$ and $S_{g_j}$ denotes the operator of the constant path $g_j$ for $j=0, 1$.
\end{theorem}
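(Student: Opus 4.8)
The plan is to prove this exactly as in the closed case \cite[Section 3.2]{HuLa10} and the symplectic-homology case \cite[Proposition 3.26]{Ue19}, by spreading the path $g$ over the half-pair-of-pants surface and transporting the relevant moduli spaces. Let $S$ be the disk with punctures $z_0$ (negative output), $z_1, z_2$ (positive inputs) and strip-like ends $\epsilon_0, \epsilon_1, \epsilon_2$, carrying the Floer data $(\beta, H^S, J^S)$ that defines the half-pair-of-pants product, with $H^S(\epsilon_i(s,t), \cdot) = H_i^t$. First I would choose a smooth $S$-family $G = \{G_z\}_{z\in S}$ of Hamiltonian diffeomorphisms of $\R_+\times\Sigma$, again of the fibrewise form $G_z(r,y) = (r, \phi_R^{\varphi(z)}(y))$, such that on the strip-like ends one has $G_{\epsilon_0(s,t)} = g_t$, $G_{\epsilon_1(s,t)} = g_0$, $G_{\epsilon_2(s,t)} = g_t$, and such that $G_z$ preserves $\widehat L = \R_+\times\mathcal{L}$ for every $z \in \partial S$. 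Writing $g_t(r,y) = (r,\phi_R^{f(t)}(y))$ with $f(0), f(1)\in T_P\Z$, this just requires interpolating the scalar $\varphi$ over $S$ with the prescribed boundary values and with $\varphi(z) \in T_P\Z$ for $z\in\partial S$; the latter keeps the $\widehat L$-condition by $\mathcal{L}$-periodicity, exactly as for \eqref{eq: assumptionong1}. The choice $(g_t, g_0, g_t)$ on the three ends is the one compatible with gluing: the output decoration $g$ must be homotopic to the concatenation of the two input decorations, and since $g_0$ is the constant path at the starting point of $g$, concatenating $g_0$ with $g$ is homotopic rel endpoints to $g$.

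Next I would transport moduli spaces under $u \mapsto G\cdot u$, $(G\cdot u)(z) := G_z(u(z))$, exactly as in Lemma \ref{lem: sendfloerstrip}: a curve solving the half-pair-of-pants equation for $(\beta, H^S, J^S)$ with asymptotics $x_0, x_1, x_2$ is carried to a solution of the equation for the pushed-forward datum $(G_*H^S, G_*J^S)$ (after the usual adjustment absorbing the $r$-linear generating Hamiltonian $K^G$ so that the slopes become $S$-independent, as in Section \ref{sec: moregeneralcase}) with asymptotics $g\cdot x_0, g_0\cdot x_1, g\cdot x_2$. This is a bijection of moduli spaces, compatible with the Gromov--Floer and relative SFT compactifications and preserving regularity and virtual dimensions. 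Reading off the strip-like ends, $(G_*H^S, G_*J^S)$ restricts at $z_0$ to the $g$-twisted datum $(g_*H_0, g_*J_0)$, at $z_1$ to the $g_0$-twisted datum $((g_0)_*H_1, (g_0)_*J_1)$, and at $z_2$ to the $g$-twisted datum $(g_*H_2, g_*J_2)$, so the transported moduli space is exactly the one computing the half-pair-of-pants product $\HF(L;(g_0)_*H_1)\otimes\HF(L;g_*H_2)\to\HF(L;g_*H_0)$. Since the bijection identifies the two counts, and since $S_{g_0}$, $S_g$ are by definition the maps $x\mapsto g_0\cdot x$, $x\mapsto g\cdot x$ on homology, this yields $S_g(x_1\cdot x_2) = S_{g_0}(x_1)\cdot S_g(x_2)$ on the chain level, within the shifted action windows of Lemma \ref{lem: actionwindowgH}.

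To pass from this chain-level statement to the identity on $\widecheck{\HW}_*(L)$ I would reuse, essentially verbatim, the cofinality arguments of Sections \ref{sec: shapeofg_*H}--\ref{sec: moregeneralcase}: homotope $g$ to $\tilde g$ so that $\tilde g_*H$ has a fixed slope, truncate to $(\tilde g_*H)_R$ so that the Hamiltonian is constant near $r = 0$ (Lemma \ref{lem: 1to1Rtruncated}), and observe that the half-pair-of-pants product commutes with the continuation maps used in the limits, the required commuting squares following from the neck-stretching argument of Lemma \ref{lem: commdiagcontinu} under the product-index-positivity available here by Proposition \ref{cor: indexpositicityinexample}. Taking $\varprojlim_a$ and then $\varinjlim_b$ gives $S_g(x\cdot y)=S_{g_0}(x)\cdot S_g(y)$. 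For the second equality I would run the same construction with the $S$-family $G'$ whose end behaviour is $(g_t, g_t, g_1)$; the output decoration is again $g$ (concatenating $g$ with the constant path $g_1$ is homotopic to $g$), so the transported count is again $S_g(x\cdot y)$ while the left-hand count is $S_g(x)\cdot S_{g_1}(y)$. Equivalently, $G$ and $G'$ are homotopic relative to their behaviour on the ends, so Lemma \ref{lem: homotopyproperty} identifies the two products.

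The step I expect to be the main obstacle is the construction and analysis of the $S$-family $G$: one must simultaneously (i) match the prescribed paths on all three strip-like ends, (ii) keep $G_z$ compatible along $\partial S$ with the Lagrangian $\R_+\times\mathcal{L}$, and, most delicately, (iii) arrange that the pushed-forward Floer datum still satisfies the sub-closedness condition $d\big((G_*H^S)(\cdot,w)\,\beta\big)\le 0$ (after the adjustment of the $1$-form) needed for the maximum principle, so that the a priori $C^0$-bound and the relative SFT compactness of Section \ref{sec: dimofmoduli} survive and the transported moduli spaces honestly compute a half-pair-of-pants product with shifted action windows. The fibrewise form of $G_z$ and the resulting $r$-linearity of $K^G$ are precisely what make (iii) manageable, by essentially the bookkeeping of \cite[Lemma 3.18]{Ue19}.
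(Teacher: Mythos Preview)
Your proposal is correct and follows the standard Seidel/Hu--Lalonde framework, but the paper takes a cleaner shortcut that avoids the $S$-family $G$ altogether. Instead of interpolating $G_z$ over the surface, the paper fixes a specific model for the half-pair-of-pants: the slit strip $\mathcal{S} = (\R\times[0,1])\setminus\{(0,0)\}$, which carries a \emph{global} $t$-coordinate. One first uses the homotopy invariance (Lemma~\ref{lem: homotopyproperty}) to arrange $g|_{[0,1/4]}\equiv g_0$, and then simply applies the single path $g_t$ (in the global $t$) to transport $u\mapsto g\cdot u$. The explicit chart at the puncture $(0,0)$ lands in the region $\{0\le t\le 1/4\}$, where $g\equiv g_0$, so the asymptotic at that input is automatically $g_0\cdot x$; the other two ends see the full path $g_t$. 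This bypasses your obstacle~(iii) entirely: there is no $z$-variation of $G$, no extra correction term in the pushed-forward Floer equation, and no need to re-verify sub-closedness of $d\big((G_*H^S)\beta\big)$.

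Your approach does work, but it costs more bookkeeping. In particular, beyond (iii) you must also check that the boundary values of $\varphi$ are consistent: since $T_P\Z$ is discrete and $\varphi$ is continuous, $\varphi$ is forced to be constant on each boundary arc of $S$, so the values $f(0)$, $f(1)$ dictated at the ends of each arc by your three asymptotics $(g_t,g_0,g_t)$ must match arc-by-arc --- they do, but only after one fixes the cyclic order of $z_0,z_1,z_2$ correctly (this is exactly what the slit-strip model makes transparent). What your approach buys is flexibility: it works for any model of $S$ and generalizes more directly to higher $A_\infty$-operations, whereas the paper's trick is specific to the three-punctured disk.
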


An immediate corollary is the following.
\begin{corollary}\label{cor: constantpathringisom}
If $g$ is a constant path (not necessarily the identity), then $S_g$ is a ring isomorphism on $\widecheck{\HW}_*(L)$ without degree shifting.
\end{corollary}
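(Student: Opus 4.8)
The plan is to deduce Corollary~\ref{cor: constantpathringisom} as a formal consequence of Theorems~\ref{thm: Sgoperatoringeneral} and~\ref{thm: prodproperty}. So let $g$ be a constant path, $g_t = \phi$ for all $t\in[0,1]$, where in our setting $\phi = \id_{\R_+}\times\phi_R^{f}$ with $f\in T_P\Z$; this is a path of the form~\eqref{eq: generalg_tperiodic} (with constant $f$), so Theorem~\ref{thm: Sgoperatoringeneral} already provides a graded group isomorphism $S_g\colon \widecheck{\HW}_*(L)\to\widecheck{\HW}_{*+I(g)}(L)$. Note $\phi$ need not be the identity, so $S_g$ is in general a genuinely nontrivial automorphism; the content of the corollary is that it is nonetheless multiplicative and preserves the grading.

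First I would check that the degree shift vanishes, $I(g)=0$. When $g_t\equiv\phi$ is constant, the paths $g\cdot x$ and $g_0\cdot x$ coincide, so one may choose the same capping half-disk, hence the same trivialization $\tau_{g\cdot x}=\tau_{g_0\cdot x}$, for both; with this choice the path $\ell_x$ of symplectic matrices of Section~\ref{sec: deg of I(g)} is the constant path at the identity, whence $I(g)=\mu_{\RS}(\Lambda_0,\Lambda_0)=0$. (As a sanity check one can argue the other way around: once $S_g$ is known to be a unital ring isomorphism it must carry the unit of $\widecheck{\HW}_*(L)$, which sits in degree $0$, to the unit, and since $S_g$ shifts degree by $I(g)$ this forces $I(g)=0$.) Thus $S_g$ preserves the grading on $\widecheck{\HW}_*(L)$.

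Next, since $g$ is a constant path we have $g_0=g_1=g$, so the constant paths $g_0,g_1$ occurring in Theorem~\ref{thm: prodproperty} are both equal to $g$ and $S_{g_0}=S_{g_1}=S_g$. The module property of Theorem~\ref{thm: prodproperty} then reads
\[
S_g(x\cdot y)=S_g(x)\cdot S_g(y),\qquad x,y\in\widecheck{\HW}_*(L),
\]
so $S_g$ is a ring homomorphism; being bijective by Theorem~\ref{thm: Sgoperatoringeneral}, it is a ring isomorphism, and by the previous paragraph it does not shift degree, which is exactly the assertion. There is no genuine obstacle here — the argument is entirely formal — and the only step that deserves a moment's attention is the vanishing $I(g)=0$, where one should make sure the trivializations chosen to make $\ell_x$ constant constitute an admissible choice for computing $I(g)$; this is immediate because $I(g)$ is independent of all such choices.
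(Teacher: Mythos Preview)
Your proof is correct and follows the same route the paper implicitly takes: the paper states the corollary as ``an immediate corollary'' of Theorem~\ref{thm: prodproperty} without further argument, and your deduction---setting $g_0=g_1=g$ in the module property to get multiplicativity, together with the observation that $I(g)=0$ for a constant path---is exactly what is meant. Your added justification that $\ell_x$ is constant at the identity (hence $I(g)=0$) spells out the step the paper leaves tacit.
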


\begin{proof}[Proof of Theorem \ref{thm: prodproperty}]
We present a proof of the first part, i.e. $S_g(x \cdot y) = S_{g_0}(x) \cdot S_g(y)$. The second part follows from the same argument. We argue analogously to \cite[Proposition 3.8]{HuLa10}, \cite[Proposition 6.3]{Se97}, and \cite[Proposition 3.30]{Ue19}. The idea of the proof is that we choose a specific disk as the domain of half-pair-of-pants taking into account the homotopy property in Lemma \ref{lem: homotopyproperty}. 

Lemma \ref{lem: homotopyproperty} allows us to assume that $g_{[0, 1/4]} \equiv g_0$ since we can homotope $g$ to be constant for $0 \leq t \leq 1/4$ without changing the operator $S_g$. Now we take a specific disk with three points removed on the boundary, namely
$$
\mathcal{S} : = (\R \times [0,1]) \setminus \{(0,0)\}.
$$
For a holomorphic chart near $(0,0)$, we take
$$
(s, t) \mapsto (1/4 e^{-\pi s} \cos(\pi(1-t)), 1/4 e^{-\pi s} \sin(\pi(1-t))
$$
where $(s, t) \in \R_+ \times [0,1]$. Note that the two boundary punctures $(0,0)$ and $s = \infty$ are positive and the other puncture $s = -\infty$ is negative. 

The coefficient $\langle x \cdot y, z \rangle \in \Z_2$, with $|x|+|y| = |z|$, counts the elements of the zero dimensional moduli space $\MM(z, x, y;\beta,H^{\mathcal{S}},J^{\mathcal{S}})$, i.e. half-pair-of-pants from the domain $\mathcal{S}$ with positive ends converging to $x$ and $y$ and the negative end converging to $z$. We arrange that $x$ is the asymptotic at $(0, 0)$, $y$ at $s = \infty$, and $z$ at $s = -\infty$. (Here $(s, t)$ are the coordinates of $\R \times [0,1]$.) 

Let $u$ be an element of $\MM(z, x, y;\beta,H^{\mathcal{S}},J^{\mathcal{S}})$. The mapping $u \mapsto g_* u$ gives one-to-one correspondence
$$
\MM(z, x, y;\beta,H^{\mathcal{S}},J^{\mathcal{S}}) \leftrightarrow \mathcal{M}(g_*z, g_*x, g_*y; \beta, g_*H^{\mathcal{S}}, g_*J^{\mathcal{S}}).
$$
In particular, since $g_{[0,1/4]} \equiv g_0$, we have $g_*x  = g_{0*}x$. So we have a correspondence
$$
\mathcal{M}(z, x, y; \beta, H^{\mathcal{S}}, J^{\mathcal{S}}) \leftrightarrow \mathcal{M}(g_*z, g_{0*}x, g_*y; \beta, g_*H^{\mathcal{S}}, g_*J^{\mathcal{S}}).
$$
This implies that
$$
\langle x \cdot y, z \rangle = \langle g_{0*}x \cdot g_* y, g_*z \rangle,
$$
and hence on the homology level that
$$
\langle x \cdot y, z \rangle = \langle S_{g_0}(x) \cdot S_g (y), S_g (z) \rangle.
$$
Since $S_g$ is an isomorphism of $\Z_2$-modules, we have $\langle S_g (x \cdot y), S_gz \rangle = \langle x \cdot y, z \rangle$. From this we conclude
$$
\langle S_{g_0}(x) \cdot S_g (y), S_g (z) \rangle = \langle S_g (x \cdot y), S_g(z) \rangle,
$$
which implies $ S_{g_0}(x) \cdot S_g (y) = S_g(x \cdot y)$.
\end{proof}

\subsection{Computing the ring structure of v-shaped wrapped Floer homology}\label{subsection: ringstructureofvshaped}
 We now turn to the $A_k$-type Milnor fiber $V
_k$ and the real Lagrangian $L_j$, discussed in Section \ref{sec: realAkfiber}. We can actually work with $W_k$ as in Section \ref{ex: groupcomputation} due to invariance of (v-shaped) wrapped Floer homology under Liouville isotopies. In Section \ref{sec: computegroups}, we have computed the graded group structure of the v-shaped wrapped Floer homology of the real Lagrangian $L_j$, namely Proposition \ref{prop: groupvshapedcompu}.  In Section \ref{sec: indposak} we have shown that the contact boundary of $A_k$-type Milnor fiber is product-index-positive if $n \geq 3$; see Proposition \ref{cor: indexpositicityinexample}. Furthermore the Reeb flow \eqref{eq: periodicReebflow} on the boundary is $\mathcal{L}_j$-periodic for each $j$. We therefore have a well-defined Seidel operator in the v-shaped wrapped Floer homology $\widecheck{\HW}_*(L_j)$ as in Theorem \ref{thm: Sgoperatoringeneral},
$$
S_g : \widecheck{\HW}_*(L_j) \rightarrow \widecheck{\HW}_{*+ I(g)}(L_j)
$$
for each $0\leq j \leq k$. 

Since the index $I(g)$ is given by the Maslov index of principal Reeb chords by Lemma \ref{lem: computingI(g)}, we have
\begin{equation}\label{eq: Igcomp}
\mu(x_P) =  I(g) = 2 + (n-2)(k+1).
\end{equation}
Using these data we can compute the ring structure of $\widecheck{\HW}_*(L_j)$ as follows.

\begin{theorem}\label{thm: computationofringvshaped}
For all $0\leq j\leq k$, $n \geq 3$, and $k \geq 2$, we have a graded ring isomorphism
$$
\widecheck{\HW}_*(L_j) \cong \Z_2[x, y, y^{-1}]/ ( x^2),
$$
where $|x| = I(g) -n +1$ and $|y| = I(g)$.
\end{theorem}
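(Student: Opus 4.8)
The plan is to combine two facts already established. By Proposition \ref{prop: groupvshapedcompu} the graded group $\widecheck{\HW}_*(L_j)$ has rank at most one in every degree, with the nonzero degrees being exactly $\{IN,\ IN-n+1 : N\in\Z\}$, where $I:=(n-2)(k+1)+2$; and by Theorem \ref{thm: Sgoperatoringeneral} together with \eqref{eq: Igcomp}, for a suitable path $g$ of the form \eqref{eq: generalg_tperiodic} with $g_0=\id$ one obtains a graded group isomorphism $S_g$ of degree exactly $I(g)=I$. Write $\mathbf{1}=c_0$ and, more generally, $c_d$ for the unique nonzero element of $\widecheck{\HW}_d(L_j)=\Z_2$ when $d$ lies in the spectrum above. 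First I would pin down the module property in this situation: choosing the function $f$ in \eqref{eq: generalg_tperiodic} with $f(0)=0$ and $f(1)$ the principal period of the Reeb chords, both endpoints $g_0=\id$ and $g_1$ commute with all v-shaped admissible Hamiltonians, so $S_{g_0}=\id$, while $S_{g_1}$ is, by Corollary \ref{cor: constantpathringisom}, a degree-preserving ring automorphism of a graded group of rank at most one, hence also $S_{g_1}=\id$. The module property of Theorem \ref{thm: prodproperty} then reads $S_g(a\cdot b)=a\cdot S_g(b)=S_g(a)\cdot b$ for all $a,b$.

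Next I would identify $S_g$ explicitly. As $S_g$ is injective and raises degree by $I$, and since $d+I$ lies in the spectrum whenever $d$ does, necessarily $S_g(c_d)=c_{d+I}$ for every spectral degree $d$. Set $y:=c_I$ and $x:=c_{I-n+1}$. Applying the module property with $a=\mathbf{1}$, $b=c_d$ gives $c_{d+I}=S_g(c_d)=S_g(\mathbf{1}\cdot c_d)=y\cdot c_d$, and symmetrically $c_d\cdot y=c_{d+I}$; thus left and right multiplication by $y$ both coincide with $S_g$, so $y$ is central and invertible with inverse $y^{-1}:=c_{-I}$. It follows that $y^N=c_{IN}$ and $y^Nx=c_{I(N+1)-n+1}$, so $\{y^N,\ y^Nx : N\in\Z\}$ is a $\Z_2$-basis of $\widecheck{\HW}_*(L_j)$, and every product among basis elements is determined once $x^2$ is known: $y^ay^b=y^{a+b}$, $y^a(y^bx)=(y^bx)y^a=y^{a+b}x$, and $(y^ax)(y^bx)=y^{a+b}x^2$.

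It remains to compute $x^2$, which lies in degree $2(I-n+1)=2I-2n+2$, and I would simply check that this does not belong to $\{IN,\ IN-n+1 : N\in\Z\}$. Solving $2I-2n+2=IN$ and $2I-2n+2=IN-n+1$ forces $N=2-\tfrac{2n-2}{I}$ and $N=2-\tfrac{n-1}{I}$ respectively, and since $I=(n-2)(k+1)+2\ge 3n-4>2n-2>0$ for $n\ge 3$ and $k\ge 2$, one has $0<\tfrac{n-1}{I}\le\tfrac{2n-2}{I}<1$, so neither value of $N$ is an integer; hence $\widecheck{\HW}_{2I-2n+2}(L_j)=0$ and $x^2=0$. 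The assignment $x\mapsto x$, $y\mapsto y$ then defines a graded ring homomorphism $\Z_2[x,y,y^{-1}]/(x^2)\to\widecheck{\HW}_*(L_j)$ which carries the monomial basis $\{y^N,\ y^Nx\}$ bijectively onto the generators $\{c_{IN},\ c_{I(N+1)-n+1}\}$ and is therefore an isomorphism of graded rings with $|x|=I-n+1$ and $|y|=I$. The main obstacle here is really just this last step: the inequality ensuring $2I-2n+2\notin\{IN,IN-n+1\}$ is precisely what breaks when $k=1$ (where $2I-2n+2=I$), which is why the module property alone no longer determines the ring in that case and $k\ge 2$ must be assumed, the case $k=1$ being handled separately via \cite{AbPoSc08}.
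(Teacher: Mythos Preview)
Your proof is correct and follows essentially the same approach as the paper: both use the rank-at-most-one structure from Proposition~\ref{prop: groupvshapedcompu}, identify $S_{g_1}$ with the identity via Corollary~\ref{cor: constantpathringisom}, apply the module property of Theorem~\ref{thm: prodproperty} to show that multiplication by $c_{I(g)}$ coincides with $S_g$, and then conclude $x^2=0$ for degree reasons. Your explicit verification that $2I-2n+2$ is not a spectral degree (and the observation that this is exactly what fails when $k=1$) makes precise what the paper leaves as a one-line ``for degree reasons'' remark, but the logical structure is the same.
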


Before proving the above theorem, let us observe the following. A crucial feature of the graded group structure of $\widecheck{\HW}_*(L_j)$ in Proposition \ref{prop: groupvshapedcompu} is that it is of at most rank 1 in each degree. Indeed, the homology groups of the following degrees are of rank one, and the others vanish:
$$
\dots, -I(g) -n +1 ,-I(g), - n +1, 0, I(g) - n +1, I(g), 2I(g) - n +1, 2I(g), 3I(g) - n +1, 3I(g), \dots .
$$
Observe that, starting from the generators of degree $I(g) - n +1$ and $I(g)$, the degrees of the other generators are obtained by shifting them by $\pm I(g)$ repeatedly. We can therefore label the generators in the form either $C_{I(g)m}$ or $C_{I(g)m-n+1}$ for $m \in \Z$, where $C_k$ denotes a generator of the group $\widecheck{\HW}_k(L_j)$ of degree $k$. In particular $C_0$ is the unit element. Since the $S_g$ operator is a group isomorphism with degree shifting by $I(g)$, it follows that
 \begin{equation}\label{eq: generatorofvshaped}
 C_{I(g)m} = S_g^m(C_0)~ \text{ and } ~C_{I(g)m-n+1} = S_{g}^{m}(C_{I(g)-n+1})
\end{equation} 
 for each $m \in \Z$.

\begin{lemma}\label{lem: constidentity}
For the constant path $g_1$, the corresponding operator $S_{g_1}: \widecheck{\HW}_*(L_j) \rightarrow \widecheck{\HW}_*(L_j)$ is the identity.
\end{lemma}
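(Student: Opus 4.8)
The plan is to combine the degree bookkeeping of $S_{g_1}$ with the rigidity coming from the fact that $\widecheck{\HW}_*(L_j)$ has rank at most one in every degree.

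First I would record that the constant path $g_1$ --- i.e. the time-independent path $t \mapsto g_1$, where $g_1 = \phi_R^{f(1)}$ with $f(1) \in T_P\Z$ --- is itself of the form \eqref{eq: generalg_tperiodic}, with the constant function $f(1)$ in place of $f$. Hence Theorem \ref{thm: Sgoperatoringeneral} applies and gives a graded group isomorphism $S_{g_1} : \widecheck{\HW}_*(L_j) \to \widecheck{\HW}_{*+I(g_1)}(L_j)$. The key point is that $I(g_1) = 0$. This is immediate from Corollary \ref{cor: constantpathringisom}, which already asserts that the Seidel operator of a constant path carries no degree shift. Alternatively, one may compute $I(g_1)$ directly from its definition: writing $\psi := \phi_R^{f(1)}$ for the (constant-in-time) diffeomorphism underlying $g_1$, take a constant path $x(t) \equiv p$ with $p \in \mathcal{L}$ --- this is permissible since $I_x(g_1)$ is independent of the contractible path $x$ and all paths in $\mathcal{P}_{\widehat L}(\widehat W)$ are contractible. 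Then the path $g_1 \cdot x$ entering the definition of $I(g_1)$ is the constant path at $\psi(p)$, along which the symplectic trivialization may be taken constant; consequently $\ell_x$ is a constant path in $Sp(2n)$, and $I(g_1) = \mu_{\RS}(\ell_x \Lambda_0, \Lambda_0) = 0$.

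With $I(g_1) = 0$ in hand, $S_{g_1}$ is a degree-preserving group automorphism of $\widecheck{\HW}_*(L_j)$. By Proposition \ref{prop: groupvshapedcompu}, each graded piece $\widecheck{\HW}_k(L_j)$ is isomorphic either to $\Z_2$ or to $0$: the two arithmetic progressions $\{IN\}$ and $\{IN - n + 1\}$ of nontrivial degrees are disjoint because $I = (n-2)(k+1)+2 > n-1$ for $n \geq 3$. Since the only group automorphism of $\Z_2$ (and of the trivial group) is the identity, $S_{g_1}$ restricts to the identity on every graded piece, whence $S_{g_1} = \id$.

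I do not anticipate a real obstacle: the entire content of the statement is that $\widecheck{\HW}_*(L_j)$ is too rigid --- rank $\leq 1$ per degree over $\Z_2$ --- to support a nontrivial degree-zero automorphism. The only point requiring a little care is the vanishing $I(g_1) = 0$, which must be justified even though $g_1 = \phi_R^{f(1)}$ is in general a nontrivial symplectomorphism (for instance $g_1(z) = (z_0, -z_1, \dots, -z_n)$ in the $A_k$-type model); the constant-trivialization observation above, or a direct appeal to Corollary \ref{cor: constantpathringisom}, disposes of it.
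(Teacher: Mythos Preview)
Your proof is correct and follows essentially the same approach as the paper: both invoke Corollary~\ref{cor: constantpathringisom} to see that $S_{g_1}$ is a degree-preserving automorphism, and then use that each graded piece of $\widecheck{\HW}_*(L_j)$ is at most one-dimensional over $\Z_2$, so the only such automorphism is the identity. The paper phrases the first step as ``ring isomorphism without degree shift'' while you use only the group-automorphism part, but over $\Z_2$ with rank~$\leq 1$ in each degree either formulation suffices.
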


\begin{proof}
This directly follows from the fact that $S_{g_1}$ is a ring isomorphism without grading shift by Corollary \ref{cor: constantpathringisom} and the fact that for all $k \in \Z$, the $k$-th group $\widecheck{\HW}_k(L_j)$ is a one-dimensional vector space over $\Z_2$ if it is not zero. In this situation, we must have $S_{g_1}(C_k) = C_k$ for all $k \in \Z$, i.e. $S_{g_1} = \id$.
\end{proof}


We now prove Theorem \ref{thm: computationofringvshaped}. 

\begin{proof}[Proof of Theorem \ref{thm: computationofringvshaped}]
For degree reasons, if $k \geq 2$, we already know that $C_{I(g)m-n+1} \cdot C_k  = C_k \cdot C_{I(g)m-n+1} = 0$ for all $k \not \in I(g)\Z$ and $m \in \Z$. It only remains to determine the product of the form
$$
C_{I(g)m}\cdot C_k \text{ and } C_k \cdot C_{I(g)m}
$$
for $m \in \Z$ and $k = I(g)\ell$ or $k = I(g)\ell -n +1$ for some $\ell \in \Z$. (Note that the ring $\widecheck{\HW}_*(L_j)$ may not be commutative in general.)

We claim that $C_{I(g)m}\cdot C_k= C_k \cdot C_{I(g)m} = C_{I(g)m + k}$. It suffices to show that $C_{I(g)}\cdot C_k= C_k \cdot C_{I(g)} = C_{I(g) + k}$ since the claim follows by applying this several times. We compute, using Theorem \ref{thm: prodproperty},
\begin{align*}
C_k \cdot C_{I(g)} &= C_k \cdot S_g(C_0) = S_g(C_k \cdot C_0) = S_g(C_k)= C_{I(g)+ k}.
\end{align*}
We next compute, using Theorem \ref{thm: prodproperty} and Lemma \ref{lem: constidentity},
\begin{align*}
C_{I(g)} \cdot C_k &= S_g(C_0) \cdot C_k = S_g(C_0) \cdot S_{g_1}(C_k) = S_g(C_0 \cdot C_k) = S_g(C_k) = C_{I(g) + k}.
\end{align*}
This completes the proof of the claim.

We define a ring homomorphism $\varphi: R \rightarrow \widecheck{\HW}_*(L_j)$ by assigning 
$$
\varphi(x) = C_{I(g)-n+1}, \; \varphi(y) = C_{I(g)}
$$
where $R$ is a ring given by $R = \Z_2[x, y, y^{-1}]$. By the above product computations, we see that $\varphi$ is a ring homomorphism. It also follows from the computation that $\ker \varphi$ is generated by $x^2$. Indeed, we have seen that $C_{I(g) -n +1}^2 = 0$ and $C_{I(g)}C_{-I(g)} = C_0$. We therefore conclude that the ring $\widecheck{\HW}_*(L_j)$ is isomorphic to the quotient ring $\Z_2[x, y, y^{-1}]/(x^2)$ with grading preserved.
\end{proof}

\begin{remark}
The condition $k \geq 2$ is used at the beginning of the proof of Theorem \ref{thm: computationofringvshaped}; If $k=1$ our technique determines the ring structure of $\widecheck{\HW}_*(L_j)$ only partially. 
\end{remark}

\section{Computing the ring structure of wrapped Floer homology}\label{sec: computation}

\subsection{Viterbo transfer map}\label{sec: viterbomap} Let $L$ be an admissible Lagrangian in a Liouville domain $(W,\lambda)$. Following the constructions in \cite{CiFrOa10}, \cite{CiOa18}, we recall a definition of the \emph{Viterbo transfer map} from $\HW_*(L)$ to $\widecheck{\HW}_*(L)$.

Consider an admissible Hamiltonian $K_{\mathfrak{a}}$ on $\widehat W$ which is of the form, up to smoothing,
\begin{equation}\label{shapeofK}
K_{\mathfrak{a}} (r) = \begin{cases} \mathfrak{a}r+\mathfrak{b} & r \geq 1 \\ 0 & r \leq 1 \end{cases}
\end{equation}
for some constants $\mathfrak{a}>0$ and $\mathfrak{b}$. Let $H_{\mathfrak{a}}$ be a v-shaped admissible Hamiltonian of the form $H$ in (\ref{shapegH}) such that $K_{\mathfrak{a}} \leq H_{\mathfrak{a}}$ for $r\leq 1$ and   $K_{\mathfrak{a}} = H_{\mathfrak{a}}$ for $r \geq 1$. Then we can take a non-increasing family of Hamiltonians $\{G_s\}_{s\in \R}$ such that
\begin{equation}\label{familyofhamiltonians}
 G_s = \begin{cases}  H_{\mathfrak{a}} & s \leq -R,\\ K_{\mathfrak{a}} & s \geq R \end{cases}
 \end{equation}
for some sufficiently large $R >0$. We also choose a family of almost complex structures $\{J_s\}_{s \in \R}$ such that
\begin{equation}\label{familyofacs}
 J_s =\begin{cases} J_{H} & s \leq -R \\ J_{K} & s \geq R \end{cases}
 \end{equation}
where $(H_{\mathfrak{a}}, J_{H})$ and $(K_{\mathfrak{a}}, J_K)$ are regular Floer data. These data define a continuation map as in Section \ref{sec: continuationamap}, namely  
$$ f^{(a,b)}_{K_{\mathfrak{a}}, H_{\mathfrak{a}}} : \HF_*^{(a,b)} (L;K_{\mathfrak{a}}, J_{K}) \to \HF_*^{(a,b)} (L;H_{\mathfrak{a}}, J_{H}) $$
for each action window $(a,b)$. 

We define the \emph{Viterbo transfer map with action window $(a,b)$} to be the direct limit of continuation maps 
$$ f^{(a,b)} : \HW^{(a,b)}_* (L) \to \widecheck{\HW}^{(a,b)}_* (L), \quad f^{(a,b)} =\varinjlim_{\mathfrak{a} \to \infty} f^{(a,b)}_{K_\mathfrak{a},H_\mathfrak{a}}.$$
Since the transfer map $f^{(a,b)}$ is essentially a continuation map, which is compatible with enlarging the action window, we can further take the limits as $a \to -\infty$ and $b \to \infty$. We consequently have a map 
$$ 
f  : \HW_* (L) \to \widecheck{\HW}_* (L), \quad f = \lim_{b \to +\infty} \lim_{a\to -\infty} f^{(a,b)},
$$
called the \emph{Viterbo transfer map}.

It is shown in \cite[Theorem 10.2]{CiOa18} that the Viterbo transfer map in symplectic homology respects the ring structure given by pair of pants products. The same argument directly applies to the half-pair-of-pants product, which yields the following.

\begin{proposition}
The Viterbo transfer map $f : \HW_* (L) \to \widecheck{\HW}_* (L)$ is a ring homomorphism.
\end{proposition}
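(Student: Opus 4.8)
The plan is to follow the proof of \cite[Theorem 10.2]{CiOa18}, adapting the closed-string pair-of-pants argument to the open-string half-pair-of-pants setting. Recall that, after smoothing, the $\widecheck{\HW}$-product is the homology-level map induced by counting half-pair-of-pants $u\colon S\to\widehat W$ with Floer data $(\beta,H^S,J^S)$ whose strip-like ends are modelled on v-shaped admissible Hamiltonians, while the $\HW$-product is induced by the analogous count with ordinary admissible Hamiltonians; the Viterbo transfer $f^{(a,b)}_{K_{\mathfrak a},H_{\mathfrak a}}$ is, by construction, the continuation map attached to the monotone homotopy $\{G_s\}$ of \eqref{familyofhamiltonians} from an admissible $K_{\mathfrak a}$ to a v-shaped $H_{\mathfrak a}$ agreeing with $K_{\mathfrak a}$ for $r\ge 1$. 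Since the products, the continuation maps enlarging the slope, and the transfer maps are all mutually compatible (this is already used in Section~\ref{sec: continuationamap} and Section~\ref{sec: viterbomap}), it suffices to prove the chain-level identity, up to chain homotopy, $c\circ m_K \simeq m_H\circ(c\otimes c)$ for a fixed action window and comparable slopes, where $c$ is the continuation chain map realizing the transfer and $m_K$, $m_H$ are the chain-level half-pair-of-pants products on the admissible and v-shaped sides; passing to $\varinjlim_{\mathfrak a}\varinjlim_b\varprojlim_a$ then gives the statement.

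The geometric heart of the argument is the observation that both composite operations compute a single ``mixed'' half-pair-of-pants operation: gluing monotone continuation strips (from $K$-data to $H$-data) onto the two positive ends of an $H$-type half-pair-of-pants produces a three-punctured disk $S$ with a perturbation datum $\mathfrak d_1$ that is of $K$-type near the two inputs and of $H$-type near the output; gluing a monotone continuation strip onto the negative end of a $K$-type half-pair-of-pants produces the same surface $S$ with a perturbation datum $\mathfrak d_0$ again of $K$-type near the inputs and of $H$-type near the output. By a standard gluing/compactness argument the maps induced by $\mathfrak d_0$ and $\mathfrak d_1$ on homology are $c\circ m_K$ and $m_H\circ(c\otimes c)$ respectively. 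I would then construct a one-parameter family $(\beta_\tau,H^S_\tau,J^S_\tau)$, $\tau\in[0,1]$, of admissible perturbation data on $S$ joining $\mathfrak d_0$ to $\mathfrak d_1$, keep the fixed strip-like ends throughout, and arrange that the monotonicity condition $d(H^S_\tau(\cdot,w)\,\beta_\tau)\le 0$ on $\widehat W$ (equivalently, the weighted sum-of-slopes inequality obtained from Stokes as in the definition of the $\widecheck{\HW}$-product) holds for every $\tau$. Counting the boundary points of the resulting one-dimensional parametrized moduli space $\bigcup_{\tau\in[0,1]}\mathcal M(x_0,x_1,x_2;\beta_\tau,H^S_\tau,J^S_\tau)$ over index-zero asymptotics — whose ends are the $\tau=0$ and $\tau=1$ configurations together with the usual strip- and half-pair-of-pants-breakings — produces a chain homotopy $\mathcal K$ with $\partial\mathcal K+\mathcal K\partial = c\circ m_K - m_H\circ(c\otimes c)$.

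Finally, passing to the limits is routine: the chain maps $c$ and $\mathcal K$ commute up to chain homotopy with the slope-increasing continuation maps and with enlargement of the action window, exactly as in the constructions of the $\widecheck{\HW}$-product and of $f$, so $f(x_1\cdot x_2)=f(x_1)\cdot f(x_2)$ holds on $\HW_*(L)$. Strict unitality, $f(\mathbf 1)=\mathbf 1$, follows from the fact that on both sides the unit is represented in the window $(-\epsilon,\epsilon)$ by the minimum of the chosen Morse function on $\mathcal L$ and the continuation map fixes it; I would record this separately as in \cite{Ri13}.

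The step I expect to be the main obstacle is the construction of the interpolating family $(\beta_\tau,H^S_\tau,J^S_\tau)$: one must move the Hamiltonian shapes at the punctures from admissible (zero for $r\le 1$) to v-shaped (dipping below zero near $\{r=1\}$) while preserving $d(H^S_\tau\,\beta_\tau)\le 0$ for all $\tau$, so that the maximum principle applies and the energies and $C^0$-norms of the curves are bounded uniformly in $\tau$; this is precisely where the compatibility $K_{\mathfrak a}=H_{\mathfrak a}$ for $r\ge1$ must be exploited and where the construction must be tailored as in \cite{CiOa18}. Once these a priori estimates are in place — together with the relative SFT/Gromov compactness already invoked in Section~\ref{sec: indfill} to rule out curves escaping to the cylindrical end — the transversality and gluing theory needed to run the chain-homotopy argument is standard.
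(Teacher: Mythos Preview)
Your proposal is correct and follows exactly the route the paper takes: the paper does not give an independent proof but simply cites \cite[Theorem 10.2]{CiOa18} and asserts that ``the same argument directly applies to the half-pair-of-pants product,'' which is precisely the adaptation you have sketched in detail.
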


\subsection{Computing the ring structure on $\HW_*(L_j)$}
Let $L_j$ be the real Lagrangian in $A_k$-type Milnor fiber $V_k$, defined in Section \ref{sec: realAkfiber}. We shall show that the Viterbo transfer map $\HW_*(L_j) \rightarrow \widecheck{\HW}_*(L_j)$ is in fact injective. From this, we will get the ring structure of $\HW_*(L_j)$ out of the ring structure of $\widecheck{\HW}_*(L_j)$ which is computed in Theorem \ref{thm: computationofringvshaped}.

More precisely, the positive degree parts of the wrapped Floer homology $\HW_*(L_j)$ and the v-shaped wrapped Floer homology $\widecheck{\HW}_*(L_j)$ can be identified in a sense that both $\HW_*(L_j)$ and $\widecheck{\HW}_*(L_j)$ are isomorphic when $* \geq 0$. We will show that the Viterbo transfer map $f: \HW_*(L_j) \to \widecheck{\HW}_*(L_j)$ maps each generator to a generator of the same degree. This will be done by specifying the Viterbo transfer map by realizing the generators of $\HW_*(L_j)$ and $\widecheck{\HW}_*(L_j)$ in view of Morse--Bott technique.

\subsubsection{Specifying the Viterbo transfer map} As in Section \ref{ex: groupcomputation}, we work with a deformed domain $W_k$ whose boundary is the $A_k$-type Brieskorn manifold $\Sigma_k$. The Reeb flow on the boundary is given as \eqref{eq: periodicReebflow}, and Reeb chords in $(\Sigma_k, \alpha, \mathcal{L}_j)$ are of Morse--Bott type. The v-spectrum is given by \eqref{eq: vspectrum}, and the (ordinary) spectrum of Reeb chords is obtained by taking positive periods in the v-spectrum, i.e.
$$
\Spec(\Sigma_k, \alpha, \mathcal{L}_j) = \{N \cdot 2(k+1)\pi \;|\; N \in \N\}.
$$
Recall that for each $T \in \Spec(\Sigma_k, \alpha, \mathcal{L}_j) \subset \widecheck{\Spec}(\Sigma_k, \alpha, \mathcal{L}_j)$, the corresponding Morse--Bott submanifold $\mathcal{L}_T$ of Reeb chords forms $S^{n-1}$-family. 

Let $0 < \mathfrak{a} \not \in \Spec(\Sigma_k, \alpha, \mathcal{L})$. We take a v-shaped admissible Hamiltonian $K_{\mathfrak{a}}$ and an admissible Hamiltonian $H_{\mathfrak{a}}$ on $\widehat W_k$ defined as in Section \ref{sec: viterbomap}. Their Hamiltonian 1-chords form Morse--Bott submanifolds in $L$ since the Reeb chords on the boundary are of Morse--Bott type. Note that $K_{\mathfrak{a}}$ and $H_{\mathfrak{a}}$ are chosen to be identical to each other in the region where the Morse--Bott families of Hamiltonian 1-chords corresponding to non-constant Reeb chords appear (roughly, the region ``$r \geq 1$''). Therefore the Morse--Bott families of Hamiltonian 1-chords of $K_{\mathfrak{a}}$ in this region can be canonically identified with the Morse--Bott families of Hamiltonian 1-chords of $H_{\mathfrak{a}}$. Furthermore they both correspond to Morse--Bott families $\mathcal{L}_T$ of Reeb chords for $T \in \Spec(\Sigma_k, \alpha, \mathcal{L}_j)$ with $0 < T < \mathfrak{a}$.

By taking Morse functions on each Morse--Bott submanifold of Hamiltonian 1-chords we can perturb $K_{\mathfrak{a}}$ and $H_{\mathfrak{a}}$ in a standard way (e.g. as in \cite{Po94}) to make them non-degenerate. Denote the resulting perturbed Hamiltonians by $\widetilde K_{\mathfrak{a}}$ and $\widetilde H_{\mathfrak{a}}$ respectively. The Hamiltonian 1-chords of those correspond to critical points of Morse functions on Morse--Bott submanifolds. In particular, in the region where $K_{\mathfrak{a}} = H_{\mathfrak{a}}$, we may take the same perturbing Morse functions for both. Then $\widetilde K_{\mathfrak{a}}$ and $\widetilde H_{\mathfrak{a}}$ still coincide in this region and hence the Hamiltonian 1-chords in there canonically correspond to each other.

Since $\mathcal{L}_T$ forms an $S^{n-1}$-family, we can take  perturbing Morse functions with exactly two critical points; the minimum and the maximum. As a result, for each $T \in \Spec(\Sigma_k, \alpha, \mathcal{L})$ with $T < \mathfrak{a}$, we have two non-degenerate Hamiltonian 1-chords, say $x_T$ (minimum) and $y_T$ (maximum), of $\widetilde K_{\mathfrak{a}}$ and two Hamiltonian 1-chords, say $x'_T$ (minimum) and $y'_T$ (maximum), of $\widetilde H_{\mathfrak{a}}$. Even though we denote them with different notations, they are actually the same chords since $\widetilde K_{\mathfrak{a}} = \widetilde H_{\mathfrak{a}}$ where the chords appear. Moreover the gradings of $x'_T$ and $y'_T$ are the same as those of $x_T$ and $y_T$, respectively, for each $T$. 

\begin{proposition}\label{prop: computationofcontinuation}
 The continuation map $f_{ \widetilde{K}_{\mathfrak{a}}, \widetilde{H}_{\mathfrak{a}} }$ at the chain level  satisfies
$$ f_{\widetilde{K}_{\mathfrak{a}},\widetilde{H}_{\mathfrak{a}}} (x_T) = x'_T~ \text{ and  }~ f_{\widetilde{K}_{\mathfrak{a}},\widetilde{H}_{\mathfrak{a}}}(y_T) = y'_T$$
for $T \in \text{Spec} (\Sigma_k, \alpha,\mathcal{L}_j)$ with $T < \mathfrak{a}$.
\end{proposition}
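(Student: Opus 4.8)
The plan is to set up the homotopy defining $f_{\widetilde{K}_{\mathfrak{a}},\widetilde{H}_{\mathfrak{a}}}$ so that each of the chords $x_T,y_T$ (equivalently $x'_T,y'_T$) is joined only to itself, by a constant continuation trajectory, and to no other chord. Concretely, since $\widetilde K_{\mathfrak{a}}$ and $\widetilde H_{\mathfrak{a}}$ agree, \emph{together with their perturbing Morse data}, on an open set $U$ containing all the Hamiltonian $1$-chords attached to Reeb chords of period $T<\mathfrak{a}$, I would choose the monotone homotopy $\{(G_s,J_s)\}_{s\in\R}$ of \eqref{familyofhamiltonians}--\eqref{familyofacs} so that $(G_s,J_s)$ is $s$-\emph{independent} on $U$; this is compatible with $\partial_s G_s\le 0$ precisely because the two Hamiltonians already coincide there. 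With this choice the constant cylinder $u(s,t):=x_T(t)$, where $x_T$ is regarded simultaneously as a chord of $\widetilde K_{\mathfrak{a}}$ and of $\widetilde H_{\mathfrak{a}}$, solves the parametrized Floer equation and defines an element of $\mathcal{M}(x'_T,x_T;G_s,J_s)$, and likewise for $y_T$. So the content is to show these are the only elements.

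The core argument is confinement followed by $\R$-rigidity. First, the usual $C^0$-estimate (integrated maximum principle, using that the $J_s$ are of contact type and the $G_s$ convex of slope $\le\mathfrak{a}\notin\Spec(\Sigma_k,\alpha,\mathcal{L}_j)$ at the cylindrical end), together with the action estimate $\mathcal{A}_{\widetilde H_{\mathfrak{a}}}(x_-)\le\mathcal{A}_{\widetilde K_{\mathfrak{a}}}(x_+)$ for monotone continuations, forces any index-$0$ continuation trajectory $u$ asymptotic to $x_T$ (or $y_T$) to lie in a fixed compact region. I would then neck-stretch at a suitable hypersurface $\{r_0\}\times\Sigma$ situated below $U$, exactly as in the proof of Proposition \ref{prop: groupindependence} and Lemma \ref{lem: commdiagcontinu}: if $u$ left $U$ it would break, by relative SFT compactness, into a holomorphic building one of whose pieces would lie in a moduli space of the type considered in Section \ref{sec: indfill} whose virtual dimension, by the product-index-positivity of $(\Sigma_k,\xi,\mathcal{L}_j)$ (Proposition \ref{cor: indexpositicityinexample}) and the formula \eqref{eq: modulidimform}, is negative --- a contradiction. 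Hence $u\subset U$. On $U$ the data $(G_s,J_s)$ is $s$-independent, so $u$ is an honest Floer trajectory for $(\widetilde K_{\mathfrak{a}}|_U,J|_U)$; since the continuation moduli space is $0$-dimensional while an $s$-independent Floer trajectory between two distinct chords carries a free $\R$-action (hence lives in a moduli space of positive dimension), $u$ must be $s$-invariant, i.e. the constant trajectory over $x_T$. This constant trajectory is cut out transversally, so $\mathcal{M}(x'_T,x_T;G_s,J_s)=\{u\}$ and $f_{\widetilde K_{\mathfrak{a}},\widetilde H_{\mathfrak{a}}}(x_T)=x'_T$; the same reasoning gives $f_{\widetilde K_{\mathfrak{a}},\widetilde H_{\mathfrak{a}}}(y_T)=y'_T$.

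I expect the confinement step to be the main obstacle: ruling out continuation trajectories that descend from $U$ into the region where $\widetilde K_{\mathfrak{a}}$ and $\widetilde H_{\mathfrak{a}}$ genuinely differ and then return. Making the neck-stretching precise requires choosing $\{r_0\}\times\Sigma$ and arranging the homotopy so that the Hamiltonian is both $s$- and $r$-constant in a collar of it, and then quoting relative SFT compactness and the virtual-dimension formula as in Section \ref{sec: indfill}; this is the only place where product-index-positivity is used. A minor but necessary bookkeeping point, already arranged in the paragraph preceding the proposition, is that the perturbing Morse functions on the $S^{n-1}$-families $\mathcal{L}_T$ be chosen literally identically for $\widetilde K_{\mathfrak{a}}$ and $\widetilde H_{\mathfrak{a}}$ on $U$, so that $x_T=x'_T$ and $y_T=y'_T$ as paths and the chain-level statement is meaningful.
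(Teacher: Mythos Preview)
Your proof is correct but takes a substantially more elaborate route than the paper. The paper's argument is a direct energy estimate: for any $u \in \mathcal{M}(x'_T, x_T; G_s, J_s)$ one computes
\[
E(u) \;=\; \int_{\R\times[0,1]} u^*\omega - (u^*dG_s)\wedge dt \;\le\; \mathcal{A}_{\widetilde K_{\mathfrak{a}}}(x_T) - \mathcal{A}_{\widetilde H_{\mathfrak{a}}}(x'_T) \;=\; 0,
\]
the inequality coming from $\partial_s G_s \le 0$ and the final equality from the fact that $\widetilde K_{\mathfrak{a}} = \widetilde H_{\mathfrak{a}}$ in the region containing the chord, so the two actions literally coincide. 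Hence $E(u)=0$, which forces $du - X_{G_s}\,dt \equiv 0$ and thus $u(s,t)=x_T(t)$ for all $s$. No neck-stretching, no SFT compactness, no appeal to product-index-positivity is needed.

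Your confinement-then-$\R$-rigidity strategy would also work, and it is the right template in situations where the two asymptotic chords do \emph{not} have exactly equal action (so the zero-energy argument is unavailable). But here they do --- and you even wrote down the relevant action inequality $\mathcal{A}_{\widetilde H_{\mathfrak{a}}}(x_-)\le\mathcal{A}_{\widetilde K_{\mathfrak{a}}}(x_+)$ without pushing it to its conclusion. The paper's approach buys simplicity and works in any Liouville domain, with no index hypothesis; your approach buys robustness when actions genuinely differ, at the cost of invoking the entire machinery of Section~\ref{sec: indfill} for a statement that follows from Stokes' theorem.
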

\begin{proof}
We prove the assertion for $x_T$ and $x'_T$, and the argument for $y_T$ and $y_T'$ is verbatim. 

Let $u \in \mathcal{M} ( x_T, x'_T ; G_s,J_s)$ be an element of the parametrized moduli space in the definition of $f_{ \widetilde{K}_{\mathfrak{a}}, \widetilde{H}_{\mathfrak{a}} } $. We claim that the curve $u : \R \times [0,1] \to V_k$ stays at $x_T =x'_T$, i.e. $u(s,t) = x_T(t) = x'_T(t)$ for all $s\in \R$. This implies that there is a unique element of $\mathcal{M} ( x_T, x'_T ; G_s,J_s)$ and hence the assertion will follow.

To prove the claim, we estimate the energy of $u$ as follows.
\begin{align*}
E(u) &= \frac{1}{2} \int_{ \R \times [0,1] } | du - X_{G_s} \otimes dt|^2 ds \wedge dt
= \int_{\R \times [0,1]}  u^* \omega - (u^* dG_s) \wedge dt\\
&= \int_{\R \times [0,1]} d (u^* \lambda - u^* G_s dt) + \frac{\p G_s}{\p s} ds \wedge dt
\leq  \int_{\R \times [0,1]} d (u^* \lambda - u^*G_s dt)&\\
&= \int_{\p (\R \times [0,1])} u^* \lambda - u^*G_s dt
= \mathcal{A}_{\widetilde{K}_{\mathfrak{a}}} ( x_T) - \mathcal{A}_{\widetilde{H}_{\mathfrak{a}}} ( x'_T) =0.
\end{align*}
Here, the inequality follows from the fact that $\frac{\p G_s}{\p s} \leq 0$. The last equality follows from the fact that both Hamiltonians $\widetilde{K}_{\mathfrak{a}}$ and  $\widetilde{H}_{\mathfrak{a}}$ are equal in the region where the chords appear. The computation shows that the energy $E(u)$ must be zero, which implies that $du - X_{G_s} dt$ is constantly zero and hence that $u(s,t) = x_T(t) = x'_T(t)$ for all $s \in \R$. This completes the proof.
\end{proof}

In view of the computation of the $E^1$-page of the spectral sequence in Section \ref{ex: groupcomputation}, namely differentials vanish for degree reasons, the 1-chords $x'_T$ and $y'_T$ (with $T < \mathfrak{a}$) are precisely the generators of the group $\HF_k(L_j; \widetilde{H}_{\mathfrak{a}})$ of degree $k > 0$, and likewise the 1-chords $x_T$ and $y_T$ are the generators of the group $\HF_k(L_j; \widetilde{K}_{\mathfrak{a}})$ of degree $k >0$. Therefore Proposition \ref{prop: computationofcontinuation} says that the Viterbo transfer map
$$
f_{ \widetilde{K}_{\mathfrak{a}}, \widetilde{H}_{\mathfrak{a}} } : \HF_*(L_j; \widetilde{K}_{\mathfrak{a}}) \rightarrow \HF_*(L_j; \widetilde{H}_{\mathfrak{a}})
$$
at homology level is the obvious inclusion.



To understand what happens when we take the direct limit $\lim \HF_{*}(L_j;\widetilde{K}_{\mathfrak{a}})$ as $\mathfrak{a} \to \infty$, we choose a specific cofinal family of Hamiltonians. First take an increasing sequence $\{ \mathfrak{a}_n\;|\; n\in \N\}$ of slopes at cylindrical ends such that
$\displaystyle{\lim_{n\to \infty} \mathfrak{a}_n = \infty}$ and the distance between $\mathfrak{a}_n$ and the spectrum $\text{Spec}(\Sigma_k, \alpha, \mathcal{L}_j)$ is greater than a number, say $\epsilon_0$, for every $n\in \N$.
We now inductively define a sequence of Hamiltonians $\{K_{\mathfrak{a_n}}\}_{n \in \N}$. First we choose an admissible Hamiltonian $K_{\mathfrak{a}_1}$ such that $K_{\mathfrak{a}_1}(r,y) = k_1 (r)$ for some function $k_1$ in the cylindrical coordinate $r$ satisfying 
\begin{itemize}
\item $k_1''(r) \geq 0$
\item $k_1 '(r) =\begin{cases} 0 & r \leq 1 \\ a_1 & r \geq 1+\frac{\epsilon_0}{2}. \end{cases}$
 \end{itemize}
Suppose we have chosen the Hamiltonians $K_{\mathfrak{a}_k}$ for $k <n$. We choose the $n$-th Hamiltonian $K_{\mathfrak{a}_n}$ in such a way that $K_{\mathfrak{a}_n}(r,y) = k_n (r)$ for some function $k_n$ in the cylindrical coordinate $r$ satisfying 
 \begin{itemize}
\item $k_n''(r) \geq 0$
 \item $ k_n(r) =k_{n-1}(r) $ for $r \leq 1+ \epsilon_0 \cdot( \sum_{i=1}^{n-1} \frac{1}{2^i})$.
 \item $k_n '(r) = a_n$ for $r \geq 1+\epsilon_0 \cdot( \sum_{i=1}^n \frac{1}{2^i}) ).$
 \end{itemize}
 
Then the sequence $\{ K_{\mathfrak{a}_n} \}_{n \in \N}$ form a cofinal family of admissible Hamiltonians. A key point of the construction is that $K_{\mathfrak{a}_m} = K_{\mathfrak{a}_n}$ for all $m \leq n$ in the region $\{ 1\leq r \leq 1+\epsilon_0 \cdot (\sum_{i=1}^m \frac{1}{2^i}) \}$. So we can perturb each Hamiltonian the sequence $\{ K_{\mathfrak{a}_n} \}_{n \in \N}$ to get a cofinal family of non-degenerate Hamiltonians $\{\widetilde{K}_{\mathfrak{a}_n} \}_{n \in \N}$ in such a way that $\widetilde{K}_{\mathfrak{a}_m} = \widetilde{K}_{\mathfrak{a}_n}$ for all $m \leq n$ in the region $\{ 1\leq r \leq 1+\epsilon_0 \cdot (\sum_{i=1}^m \frac{1}{2^i}) \}$. In this case, in view of the proof of Proposition \ref{prop: computationofcontinuation}, the continuation map $f_{m,n} : \HF_{*}(L_j;\widetilde{K}_{\mathfrak{a}_m}) \to \HF_{*}(L_j;\widetilde{K}_{\mathfrak{a}_n})$ is the obvious inclusion. It follows that the homology classes of $x_T$ and $y_T$ in $\HF_*(L_j;\widetilde{K}_{\mathfrak{a}_n})$ survive in the process of taking the direct limit as $n \to \infty$ and hence define nonzero homology classes in $\HW_*(L_j)$. 

Analogous arguments for v-shaped Hamiltonians show that Hamiltonian chords $x'_T$ and $y'_T$ define nonzero homology classes in $\widecheck{\HW}_*(L_j)$. As a result we have:

\begin{lemma}\label{lemma: generators}
Let $N$ be a positive integer. Denote $T_0 : = 2 (k+1) \pi$.
\begin{enumerate}
\item The homology class of $x_{N T_0}$ and $y_{ NT_0}$ generate $\HW_{\{(n-2)(k+1) + 2\} N-n+1}(L_j)$ and\\ $\HW_{\{(n-2)(k+1) + 2\} N } (L_j)$ respectively.
\item  The homology class of $x'_{NT_0}$ and $y'_{ NT_0}$ generate $\widecheck{\HW}_{\{(n-2)(k+1) + 2\} N-n+1}(L_j)$ and\\ $\widecheck{\HW}_{\{(n-2)(k+1) + 2\} N} (L_j)$ respectively.\end{enumerate}
\end{lemma}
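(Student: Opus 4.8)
The plan is to read the statement off the $E^1$-page of the Morse--Bott spectral sequence together with the cofinal families already set up above, so that most of the work has in fact been done in the preceding paragraphs. First I would recall that for $n \geq 3$ the Morse--Bott spectral sequence computing $\HW_*(L_j)$ (the analogue of Theorem \ref{vMBss}, as in \cite[Section 7.4.2]{KiKwLe18}) degenerates at the $E^1$-page for degree reasons, and likewise the one computing $\widecheck{\HW}_*(L_j)$ used for Proposition \ref{prop: groupvshapedcompu}. Concretely this says that for the perturbed Hamiltonian $\widetilde K_{\mathfrak{a}_n}$ the Floer differential vanishes on the chords coming from the Morse--Bott families $\mathcal L_T$ with $0 < T < \mathfrak{a}_n$, $T \in \Spec(\Sigma_k,\alpha,\mathcal L_j)$, so each such chord is a cycle representing a nonzero class and these classes span $\HF_*(L_j;\widetilde K_{\mathfrak{a}_n})$ in positive degrees.

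Next I would pin down the degrees. Each family $\mathcal L_{NT_0}$ is an $S^{n-1}$, and the perturbing Morse function was chosen with only two critical points, the minimum $x_{NT_0}$ contributing $H_0(S^{n-1})$ and the maximum $y_{NT_0}$ contributing $H_{n-1}(S^{n-1})$. Plugging the index $\mu(\mathcal L_{N\cdot 2(k+1)\pi}) = \{2+(n-2)(k+1)\}N$ from \cite[Lemma 7.2]{KiKwLe18} into the degree formula of Theorem \ref{vMBss} (or its $\HW$-analogue), the $H_0$-class sits in the lower of the two surviving degrees and the $H_{n-1}$-class in the higher one, i.e.\ $x_{NT_0}$ is of degree $\{(n-2)(k+1)+2\}N-n+1$ and $y_{NT_0}$ of degree $\{(n-2)(k+1)+2\}N$. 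Since, by Proposition \ref{prop: groupcompu}, $\HW_*(L_j)$ has rank exactly one in each of these degrees, and there is precisely one of the chords $x_{NT_0},y_{NT_0}$ in each of them once $\mathfrak{a}_n > NT_0$, the classes $[x_{NT_0}]$ and $[y_{NT_0}]$ must generate $\HF_*(L_j;\widetilde K_{\mathfrak{a}_n})$ in those degrees.

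It remains to pass to the limit, which is exactly what the cofinal family $\{\widetilde K_{\mathfrak{a}_n}\}$ above was arranged for: the continuation maps $f_{m,n}$ are the obvious inclusions (by the energy argument of Proposition \ref{prop: computationofcontinuation} applied where the Hamiltonians agree), so $[x_{NT_0}]$ and $[y_{NT_0}]$ survive into $\varinjlim_n \HF_*(L_j;\widetilde K_{\mathfrak{a}_n}) = \HW_*(L_j)$ and remain generators; this is (1). Assertion (2) is the verbatim argument with the cofinal family of v-shaped Hamiltonians $\{\widetilde H_{\mathfrak{a}_n}\}$ and the chords $x'_{NT_0},y'_{NT_0}$, noting that the positive-degree part of $\widecheck{\HW}_*(L_j)$ is governed by the same families $\mathcal L_T$, $T>0$, with the same indices, so nothing changes. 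The only step needing genuine care is the degree bookkeeping of the second paragraph --- matching the Morse index of a critical point on $S^{n-1}$ to the Floer grading of the corresponding $1$-chord, i.e.\ confirming that the minimum yields the lower degree and the maximum the higher; everything else is a direct appeal to facts established earlier in the text.
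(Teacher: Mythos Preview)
Your proposal is correct and follows essentially the same approach as the paper: the lemma is stated in the paper as a summary of the preceding paragraphs, and your write-up recapitulates exactly that argument --- degeneration of the Morse--Bott spectral sequence at $E^1$ for degree reasons, identification of the degrees of the min/max chords via the index formula, and survival under the direct limit using the specially constructed cofinal family with inclusion continuation maps. The only point you flag as needing care, the degree bookkeeping (minimum $\leadsto$ lower degree, maximum $\leadsto$ higher degree), is indeed routine once one unwinds the shift in Theorem~\ref{vMBss}.
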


Denote by $B_d$ the homology class of a generator of $\HW_d (L_j)$ for $d \geq 0$. Lemma \ref{lemma: generators} implies that $x_{NT_0}$ is a representative of $B_{\{(n-2)(k+1) + 2\} N -n+1}$ and $y_{NT_0}$ is a representative of $B_{\{(n-2)(k+1) + 2\} N}$ for $N >0$. The group $\HW_0(L_j)$ of degree zero is generated by the unit element $B_0$. Likewise $x'_{NT_0}$ is a representative of $C_{\{(n-2)(k+1) + 2\} N-n+1}$ and $y'_{NT_0}$ is a representative of $B_{\{(n-2)(k+1) + 2\} N}$ for $N \in \Z$, where $C_j$ is defined in (\ref{eq: generatorofvshaped}).

Combining Lemma \ref{lemma: generators} with Proposition \ref{prop: computationofcontinuation}, we have the following.

\begin{theorem}\label{thm: computationoftransfermap}
The Viterbo transfer map $f : \HW_*(L_j) \to \widecheck{\HW}_*(L_j)$ is given by
$$ B_d \mapsto C_d$$
for each $d =  \{(n-2)(k+1) + 2\} N$ with $N \geq 0$ and $d=\{(n-2)(k+1) + 2\} N -n+1$ with $N \geq 1$. In particular, the Viterbo transfer map is an injective ring homomorphism.
\end{theorem}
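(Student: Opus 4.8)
The proof is largely an assembly of the results already established, so the plan is to combine them with care. First I would pass Proposition \ref{prop: computationofcontinuation}, which is a chain-level statement at each finite slope $\mathfrak{a} \notin \Spec(\Sigma_k, \alpha, \mathcal{L}_j)$, through the direct limit. Using the explicit cofinal families $\{\widetilde K_{\mathfrak{a}_n}\}_{n\in\N}$ and $\{\widetilde H_{\mathfrak{a}_n}\}_{n\in\N}$ constructed above, for which all continuation maps inside each family are the obvious inclusions (by the proof of Proposition \ref{prop: computationofcontinuation}), the naturality of continuation maps makes the squares relating the two families commute; hence the transfer maps $f_{\widetilde K_{\mathfrak{a}_n},\widetilde H_{\mathfrak{a}_n}}$ assemble into a map on the limits that carries the class of $x_{NT_0}$ to the class of $x'_{NT_0}$ and the class of $y_{NT_0}$ to the class of $y'_{NT_0}$. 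Since the transfer map is also compatible with enlarging the action window, the same identification holds for $f : \HW_*(L_j) \to \widecheck{\HW}_*(L_j)$.

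Next I would invoke Lemma \ref{lemma: generators} to read off the conclusion: the classes of $x_{NT_0}$ and $y_{NT_0}$ are precisely the generators $B_{\{(n-2)(k+1)+2\}N-n+1}$ and $B_{\{(n-2)(k+1)+2\}N}$ of $\HW_*(L_j)$, while the classes of $x'_{NT_0}$ and $y'_{NT_0}$ are the generators $C_{\{(n-2)(k+1)+2\}N-n+1}$ and $C_{\{(n-2)(k+1)+2\}N}$ of $\widecheck{\HW}_*(L_j)$. This gives $f(B_d) = C_d$ for every degree $d>0$ appearing in the statement. For the remaining degree $d = 0$, the group $\HW_0(L_j)$ is generated by the unit $B_0$, and since $f$ is a ring homomorphism (hence unital) by the proposition recalled above, $f(B_0) = C_0$.

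Finally, injectivity is immediate: by Proposition \ref{prop: groupcompu} every homogeneous piece $\HW_d(L_j)$ is either $0$ or $\Z_2$, and in the latter case $f$ sends the unique nonzero element $B_d$ to $C_d$, which is nonzero in $\widecheck{\HW}_d(L_j)$ by Lemma \ref{lemma: generators}; hence $f$ is injective in each degree and therefore injective. Together with the ring-homomorphism property this completes the proof. I expect the only point demanding genuine care to be the direct-limit step — checking that the chain-level identification of Proposition \ref{prop: computationofcontinuation} is compatible with the two cofinal systems and with the action-window limits, so that it descends to $f$ — since all the analytic content has already been handled by the energy estimate there, and what remains is bookkeeping with the known group structures.
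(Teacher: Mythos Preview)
Your proposal is correct and follows essentially the same approach as the paper: combine Proposition \ref{prop: computationofcontinuation} with the cofinal-family construction and Lemma \ref{lemma: generators} to identify $f(B_d)=C_d$, then read off injectivity from the rank-one group computation in Proposition \ref{prop: groupcompu}. The paper's own proof is in fact terser --- it treats the identification $B_d\mapsto C_d$ as already established by the preceding discussion and only remarks on injectivity --- so your version simply spells out more of the bookkeeping, including the unital handling of $d=0$.
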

\begin{proof}
For the last statement, recall that the wrapped Floer homology $\HW(L_j)$ is generated by $B_0$ and $B_d$'s for $d= \{(n-2)(k+1) + 2\} N$ and $d=\{(n-2)(k+1) + 2\} N -n+1$, $N>0$ as a $\Z_2$-module, see Proposition \ref{prop: groupcompu}.
\end{proof}

\subsubsection{The ring structure of the wrapped Floer homology $\HW_*(L_j)$}
We now compute the ring structure of the wrapped Floer homology $\HW_*(L_j)$. The following computation is analogous to Theorem \ref{thm: computationofringvshaped}. (The index $I(g)$ below is given in \eqref{eq: Igcomp}.)

\begin{theorem}\label{thm: computationofring}

For $0 \leq j \leq k$, $n \geq 3$, and $k \geq 2$, we have a graded ring isomorphism
$$\HW_*(L_j) \cong \Z_2[x,y]/(x^2) $$
where $|x| = I(g) - n+1$ and $|y| = I(g)$.
\end{theorem}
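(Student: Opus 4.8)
The plan is to transport the ring structure on $\widecheck{\HW}_*(L_j)$ obtained in Theorem~\ref{thm: computationofringvshaped} to $\HW_*(L_j)$ along the Viterbo transfer map. By Theorem~\ref{thm: computationoftransfermap} the map $f\colon \HW_*(L_j)\to\widecheck{\HW}_*(L_j)$ is an \emph{injective} ring homomorphism, and it sends the generator $B_d$ to $C_d$ for every $d=I(g)N$ with $N\geq 0$ and every $d=I(g)N-n+1$ with $N\geq 1$. Under the identification $\widecheck{\HW}_*(L_j)\cong\Z_2[x,y,y^{-1}]/(x^2)$ of Theorem~\ref{thm: computationofringvshaped} we have $C_{I(g)}=y$ and $C_{I(g)-n+1}=x$, so $f$ realizes $\HW_*(L_j)$ as the subring of $\Z_2[x,y,y^{-1}]/(x^2)$ spanned over $\Z_2$ by the nonnegatively graded monomials, which is exactly $\Z_2[x,y]/(x^2)$. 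The rest of the proof is just making this precise.

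Concretely, I would define a graded ring homomorphism $\varphi\colon \Z_2[x,y]/(x^2)\to\HW_*(L_j)$ by $\varphi(x)=B_{I(g)-n+1}$ and $\varphi(y)=B_{I(g)}$, with $|x|=I(g)-n+1$ and $|y|=I(g)$. That $\varphi$ is well defined amounts to checking $B_{I(g)-n+1}^2=0$: applying the ring homomorphism $f$ gives $f(B_{I(g)-n+1}^2)=C_{I(g)-n+1}^2$, which vanishes by the product computation in the proof of Theorem~\ref{thm: computationofringvshaped}, and injectivity of $f$ then forces $B_{I(g)-n+1}^2=0$.

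Next I would check surjectivity of $\varphi$. Using that $f$ is a ring homomorphism together with the multiplicativity relation $C_d\cdot C_{d'}=C_{d+d'}$ established in the proof of Theorem~\ref{thm: computationofringvshaped}, we get $f(\varphi(y^N))=C_{I(g)}^N=C_{I(g)N}$ and $f(\varphi(xy^{N-1}))=C_{I(g)-n+1}\cdot C_{I(g)}^{N-1}=C_{I(g)N-n+1}$, which are nonzero. Injectivity of $f$ shows $\varphi(y^N)$ and $\varphi(xy^{N-1})$ are nonzero, and since $\HW_*(L_j)$ has rank one in those degrees by Proposition~\ref{prop: groupcompu}, they are the generators $B_{I(g)N}$ and $B_{I(g)N-n+1}$; together with $B_0=\varphi(1)$ these exhaust all generators by Proposition~\ref{prop: groupcompu}, so $\varphi$ is onto. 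Finally, injectivity of $\varphi$ is a degreewise dimension count: by Proposition~\ref{prop: groupcompu} the graded $\Z_2$-vector space $\HW_*(L_j)$ has rank at most one in each degree and is nonzero exactly in the degrees $0$, $I(g)N$ $(N\geq 1)$, and $I(g)N-n+1$ $(N\geq 1)$, which are precisely the degrees of the monomials $1$, $y^N$, $xy^{N-1}$ of $\Z_2[x,y]/(x^2)$; a surjection of graded vector spaces with equal finite ranks in each degree is an isomorphism. Hence $\varphi$ is a graded ring isomorphism.

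There is no genuinely hard step remaining — all of the analytic and combinatorial content sits in Theorems~\ref{thm: computationofringvshaped} and~\ref{thm: computationoftransfermap} and in Proposition~\ref{prop: groupcompu}. The one point that deserves care is that the conclusion is $\Z_2[x,y]/(x^2)$ and not some further quotient: this uses both the injectivity of $f$ and the explicit computation that $f$ hits every $C_d$ with $d\geq 0$ (Theorem~\ref{thm: computationoftransfermap}), since for a mere ring homomorphism extra relations in $\HW_*(L_j)$ could a priori occur. Thus the proof relies on the precise description of the transfer map on generators, not just on abstract functoriality.
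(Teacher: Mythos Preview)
Your proposal is correct and follows essentially the same approach as the paper: you transport the ring structure from $\widecheck{\HW}_*(L_j)$ to $\HW_*(L_j)$ via the injective ring homomorphism $f$ of Theorem~\ref{thm: computationoftransfermap}, and then define the same map $\Z_2[x,y]/(x^2)\to\HW_*(L_j)$, $x\mapsto B_{I(g)-n+1}$, $y\mapsto B_{I(g)}$. The only difference is organizational---the paper first verifies the products $B_{MI(g)}\cdot B_{NI(g)}=B_{(M+N)I(g)}$ and $B_{MI(g)}\cdot B_{NI(g)-n+1}=B_{(M+N)I(g)-n+1}$ directly (by the same $f$-injectivity argument you use) and then declares the map an isomorphism, whereas you package the same computations as well-definedness, surjectivity, and a degreewise dimension count.
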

\begin{proof}

From Theorem \ref{thm: computationofringvshaped} and Theorem \ref{thm: computationoftransfermap}, we observe
$$ B_{M I(g) } \cdot B_{N I(g)} = B_{(M+N) I(g) }$$
for all $M,N \geq 0$. Indeed, we can compute
\begin{align*}
f(B_{M I(g) } \cdot B_{N I(g)}) = f(B_{M I(g) }) \cdot f( B_{N I(g)}) = C_{M I(g) } \cdot C_{N I(g)} = C_{(M+N) I(g)} = f(B_{(M+N) I(g)}).
\end{align*}

The first equality follows from the fact that $f$ is a ring homomorphism. The second equality follows from Theorem \ref{thm: computationoftransfermap}, and the third equality follows from Theorem  \ref{thm: computationofringvshaped}. 
Finally the assertion follows from the fact that the transfer map $f$ is injective.

Similarly we see that 
$$ B_{M I(g)} \cdot B_{N I(g) -n+1} = B_{ (M+N)I(g) -n+1} =  B_{N I(g) -n+1} \cdot B_{M I(g)}$$
for all $M\geq 0$ and $N \geq 1$. Then, the ring homomorphism $\Z_2[x,y]/(x^2) \to \HW(L_j)$ defined by
$$ x\mapsto B_{I(g) -n+1}, y\mapsto B_{I(g)}$$
gives an isomorphism.
\end{proof}

\begin{remark}
If $k=1$, then $V_k$ can be identified with $T^*{S^n}$, and $L_j$ with $j =0,1$ is a cotangent fiber in $T^*S^n$. Therefore by the result in \cite{AbPoSc08}, we know that $\HW_*(L_j) \cong \Z_2[x]$ as graded rings with $|x| = n-1$.
\end{remark}

\bibliographystyle{abbrv}
\bibliography{biblography}

\end{document}